\definecolor{pyYellow}{HTML}{bcbd22}
\numberwithin{theorem}{section}
\numberwithin{equation}{section}
\newcommand{\exact}{\star}
\newcommand{\coarse}{H}
\newcommand{\fine}{h}
\newcommand{\kk}{{\underline{k}}}
\newcommand{\elll}{{\underline\ell}}
\newcommand{\jj}{\underline j}
\let \Re \relax
\DeclareMathOperator{\Re}{Re}
\def\bfA{\boldsymbol{A}}
\def\bfb{\boldsymbol{b}}
\def\bff{\boldsymbol{f}}
\newcommand{\Cchild}{\const{C}{child}}
\newcommand{\Cmesh}{\const{C}{mesh}}
\newcommand{\lamalg}{\const{\lambda}{alg}}
\newcommand{\lamsym}{\const{\lambda}{sym}}
\newcommand{\thetamark}{\const{\theta}{mark}}
\newcommand{\qalg}{\const{q}{alg}}
\newcommand{\qsym}{\const{q}{sym}}
\newcommand{\qsymm}{\const[\overline]{q}{sym}}
\newcommand{\Cctr}{\const[\overline]{C}{sym}}
\newcommand{\rrevision}[1]{\TemplateRevision{red}{#1}}
\newcommand{\highlighted}[1]{\TemplateRevision{red}{#1}}
\def\Re{}
\title[Adaptive FEM with quasi-optimal computational cost]{Adaptive FEM with quasi-optimal overall cost\\ for nonsymmetric linear elliptic PDEs}
\author{Maximilian Brunner}
\author{Pascal Heid}
\author{Michael Innerberger}
\author{Ani Mira{\c c}i}
\author{Dirk Praetorius}
\author{Julian Streitberger}
\address{Department of Mathematics, Technical University of Munich, Boltzmannstr. 3,
	85748 Garching bei München, Germany \& Munich Center for Machine Learning (MCML)}
\email{pascal.heid@ma.tum.de}
\address{TU Wien, Institute of Analysis and Scientific Computing, Wiedner Hauptstr. 8--10/E101/4, 1040 Vienna, Austria}
\email{maximilian.brunner@asc.tuwien.ac.at}
\email{michael.innerberger@asc.tuwien.ac.at}
\email{ani.miraci@asc.tuwien.ac.at}
\email{dirk.praetorius@asc.tuwien.ac.at}
\email{julian.streitberger@asc.tuwien.ac.at\quad \rm (corresponding author)}
\keywords{adaptive finite element method, iterative solver, nonsymmetric PDEs, optimal convergence rates, cost-optimality}
\subjclass[2010]{41A25, 65N15, 65N30, 65N55, 65Y20}
\thanks{The authors thankfully acknowledge support by the Austrian Science Fund (FWF) through the SFB \emph{Taming complexity in partial differential systems} (grant SFB F65) and the standalone project \emph{Computational nonlinear PDEs} (grant P33216). Additionally, the Vienna School of Mathematics supports Maximilian Brunner and Julian Streitberger.\newline
\indent{\bf Corrigendum.}\quad Unfortunately, there is a flaw in 
the numerical analysis of the published version [IMA J.\ Numer.\ Anal., 
\href{https://doi.org/10.1093/imanum/drad039}{DOI:10.1093/imanum/drad039}], 
which is corrected here. Neither the algorithm, nor the results are affected, 
but constants have to be adjusted. The preprint 
\href{https://arxiv.org/abs/2212.00353v2}{[arXiv:2212.00353v2]} is the original 
manuscript published in IMA J.\ Numer.\ Anal., 
\href{https://arxiv.org/abs/2212.00353v5}{[arXiv:2212.00353v5]} is the 
corrected manuscript (with color highlighting of essential changes), and 
\href{https://arxiv.org/abs/2212.00353v6}{[arXiv:2212.00353v6]} is the 
corrected manuscript (without color highlighting).}
\begin{document}
\maketitle
\begin{abstract}
	We consider a general nonsymmetric second-order linear elliptic PDE in the framework of the Lax--Milgram lemma. We formulate and analyze an adaptive finite element algorithm with arbitrary polynomial degree that steers the adaptive mesh refinement and the inexact iterative solution of the arising linear systems. More precisely, the iterative solver employs, as an outer loop, the so-called Zarantonello iteration to symmetrize the system and, as an inner loop, a uniformly contractive algebraic solver, e.g., an optimally preconditioned conjugate gradient method or an optimal geometric multigrid algorithm. We prove that the proposed inexact adaptive iteratively symmetrized finite element method (AISFEM) leads to full linear convergence and, for sufficiently small adaptivity parameters, to optimal convergence rates with respect to the overall computational cost, i.e., the total computational time. Numerical experiments underline the theory.
\end{abstract}

\section{Introduction} \label{section:introduction}
The mathematical understanding of optimal adaptivity for finite element methods (AFEMs) has reached a high level of maturity; see, e.g.,~\cite{bdd2004,stevenson2007,ckns2008,ks2011,cn2012,ffp2014,axioms}
for some contributions to linear PDEs. While the focus is usually on optimal convergence rates with respect to the degrees of freedom~\cite{bdd2004,ckns2008,ks2011,cn2012,ffp2014,axioms}, the cumulative nature of adaptivity should rather ask for optimal convergence rates with respect to the overall computational cost, i.e., the overall elapsed computational time. This, usually called \emph{optimal complexity}, has been thoroughly analyzed for adaptive wavelet methods~\cite{MR1803124,MR2035007} and it has also been addressed in the seminal work~\cite{stevenson2007} on AFEM for the Poisson model problem.
Recent works~\cite{ghps2021,hpw2021,hpsv2021} considered optimal complexity for energy minimization problems and, in particular, for symmetric linear elliptic PDEs. In contrast to this, optimal complexity for nonsymmetric linear elliptic PDEs remained an open question due to the lack of a contractive algebraic solver that is compatible with the variational structure of the PDE. Closing this gap is the topic of the present work. While the canonical candidate for solving the nonsymmetric discrete systems would be GMRES, we take a different path that is motivated by up-to-date proofs of the Lax--Milgram lemma and closely related to the Richardson iteration used in the context of optimal adaptive wavelet methods. Some comments on the challenges presented by GMRES and related future work are given below. 

As a model problem, we consider the nonsymmetric second-order linear elliptic PDE
\begin{align}\label{eq:intro:model_pb}
	-\div(\bfA \nabla u^\exact) + \bfb \cdot \nabla u^\exact + c u^\exact = f - \div \bff
	\quad \text{in } \Omega
	\quad \text{subject to} \quad
	u^\exact = 0
	\quad \text{on } \partial \Omega
\end{align}
on a polyhedral Lipschitz domain $\Omega \subset \R^d$ with $d \ge 1$, where $\bfA \in [L^\infty(\Omega)]^{d \times d}_{\rm sym}$ is a symmetric diffusion matrix, $\bfb \in [L^\infty(\Omega)]^d$ is a convection coefficient, $c \in L^\infty(\Omega)$ is a reaction coefficient, and $f \in L^2(\Omega)$ and $\bff \in [L^2(\Omega)]^d$ are the given data.

\noindent With $b(u, v)
\coloneqq
\dual{\bfA \nabla u}{\nabla v }_\Omega + \dual{\bfb \cdot  \nabla u +  c u}{ v }_\Omega
$ and $F(v)
\coloneqq \dual{f}{v}_\Omega + \dual{\bff}{\nabla v}_\Omega$, where $\dual{\cdot}{\cdot}_\Omega$ denotes the usual $L^2(\Omega)$-scalar product,
the weak formulation of~\eqref{eq:intro:model_pb} reads:
\begin{align}\label{eq:intro:weakform}
	\text{Find } u^\exact \in \XX \coloneqq H_0^1 (\Omega)
	\quad \text{such that } \quad
	b(u^\exact, v)
	=
	F(v)
	\quad \text{for all } v \in \XX.
\end{align}
To ensure the existence and uniqueness of $u^\exact \in H_0^1 (\Omega)$, we assume that the bilinear form $b(\cdot, \cdot)$ is continuous and elliptic on $ H_0^1 (\Omega)$ so that the Lax–Milgram lemma applies.

To discretize~\eqref{eq:intro:weakform}, we employ a conforming finite element method based on a conforming simplicial triangulation $\TT_\ell$ of $\Omega$
and a fixed polynomial degree $m \in \N$. With
\begin{align}\label{eq:def:X_ell}
	\XX_\ell \coloneqq \set{v_\ell \in H_0^1 (\Omega) \given v_\ell|_T \ \text{is a polynomial of degree} \le m, \text{ for all } T \in \TT_\ell},
\end{align}
the finite element formulation reads:
\begin{align}\label{eq:intro:discrete}
\text{Find } u_\ell^\exact \in \XX_\ell
\quad \text{such that } \quad
b(u_\ell^\exact , v_\ell)
=
F(v_\ell)
\quad \text{for all } v_\ell \in \XX_\ell.
\end{align}
Existence and uniqueness of $u_\ell^\exact$ follow again from the Lax--Milgram lemma.
Note that~\eqref{eq:intro:discrete} leads to a \emph{nonsymmetric}, yet \emph{positive definite} linear system of equations. 
To derive an optimal nonsymmetric algebraic solver, we follow the constructive proof of the Lax--Milgram lemma and reduce
the discrete formulations~\eqref{eq:intro:discrete} to symmetric problems
by employing
the so-called Zarantonello symmetrization (sometimes referred to as Banach--Picard fixed-point iteration).
To this end, we define the bilinear form associated with the principal part of the PDE by
\begin{align}
a(u, v) \coloneqq \dual{\bfA \nabla u}{\nabla v}_\Omega
\quad \text{for all } u, v \in \XX. 
\end{align}%
Note that $a(\cdot, \cdot)$ is continuous and elliptic on $\XX$ and consult Section~\ref{section:abstract} for details. For a given damping parameter $\delta > 0$, define the Zarantonello mapping $\Phi_\ell(\delta;\cdot) \colon \XX_\ell \to \XX_\ell$ by
\begin{align}\label{eq:intro:zarantonello}
a(\Phi_\ell(\delta; u_\ell), v_\ell)
=
a(u_\ell, v_\ell)
+ \delta \bigl[ F(v_\ell) - b(u_\ell, v_\ell) \bigr]
\quad \text{for all } v_\ell \in \XX_\ell;
\end{align}
see \cite{Zarantonello1960} or \cite[Section~25.4]{Zeidler1990}.
The Riesz--Fischer theorem (and also the Lax--Milgram lemma) proves existence and uniqueness of $\Phi_\ell(\delta; u_\ell) \in \XX_\ell$, i.e., the Zarantonello operator is well-defined.
In particular, $u_\ell^\exact = \Phi(\delta; u_\ell^\exact)$ is the only fixpoint of $\Phi(\delta; \cdot)$ for any $\delta > 0$. Moreover, choosing $\delta$ suitably small will lead to a contractive method to approximate $u_\ell^\exact$ in the spirit of the Banach fixpoint theorem with respect to the $a(\cdot,\cdot)$-induced energy norm $\enorm{v} \coloneqq a(v,v)^{1/2}$.
At this point, it thus remains to treat a symmetric, positive definite (SPD) linear system of equations corresponding to~\eqref{eq:intro:zarantonello}, that can be solved iteratively in practice for instance by the use of either a conjugate gradient (CG) method with an optimal preconditioner, see e.g.,~\cite{cnx2012}, or an optimal geometric multigrid (MG)
solver, see e.g.,~\cite{wz2017,imps2022}.

The proposed adaptive strategy of this work, hereafter referred to as AISFEM, begins with the initial guess $u_0^{0,0} \coloneqq u_0^{0,\jj} \coloneqq u_0^{0, \star} \coloneqq 0 \in \XX_0$ associated to a coarse mesh $\TT_0$. Finite element approximations $u_\ell^{k,j} \in \XX_\ell$ are successively computed, where $\ell \in \N_0$ is the mesh-refinement index of the $\ell$-th adaptively refined mesh. More precisely, $u_\ell^{k,j}$ is obtained after $j$ algebraic solver steps in the $k$-th step of the Zarantonello symmetrization approximating the unique
$u_\ell^{k,\star} \coloneqq \Phi_\ell(\delta; u_\ell^{k-1, \jj}) \in \XX_\ell$, where $u_\ell^{k-1,\jj} \in \XX_\ell$ denotes the final approximation of $u_\ell^{k-1,\star}$ when the algebraic solver is \emph{adaptively} terminated. In particular, our analysis provides stopping criteria for the algebraic solver as well as the (perturbed) Zarantonello symmetrization. We give a schematic view of our approach in Figure~\ref{intro:scheme}; see Algorithm~\ref{algorithm} in Section~\ref{section:algorithm} below for the formal statement.

\begin{figure}[!ht]
\centering
\includegraphics[width=\textwidth]{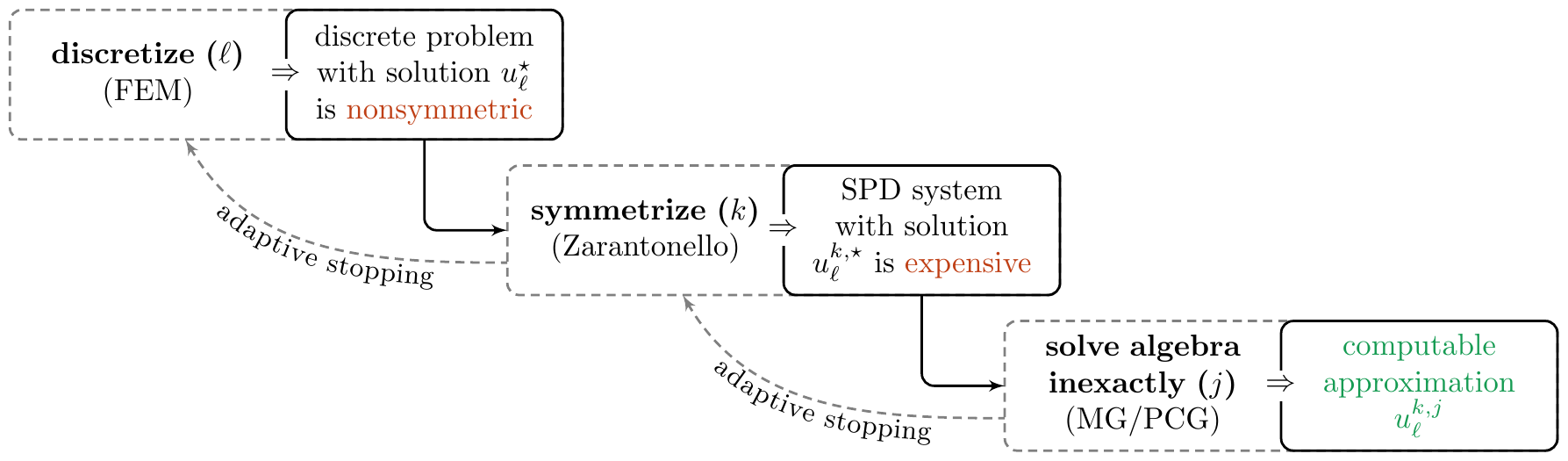}
\caption{\label{intro:scheme}Schematic view of the AISFEM algorithm components.}
\end{figure}

Overall, the adaptive strategy thus leads to a triple index set
\begin{align}\label{eq:intro:index-set}
\QQ \coloneqq \set{(\ell,k,j) \in \N_0^3 \given u_\ell^{k,j} \text{ is used by the AISFEM Algorithm~\ref{algorithm}}},
\end{align}
equipped with the natural lexicographic order $\abs{\cdot, \cdot, \cdot}$. This enables us to present the main contributions of this work: First,
in the spirit of~\cite{ghps2021,hpsv2021}, we prove that the quasi-error
\begin{align}
\Delta_\ell^{k,j}
\coloneqq
\enorm{u^\exact - u_\ell^{k,j}} + \enorm{u_\ell^{k,\exact} - u_\ell^{k,j}} + \eta_\ell(u_\ell^{k,j})
\quad \text{for all } (\ell,k,j) \in \QQ,
\end{align}
which is the sum of the overall error plus the algebraic solver error plus the residual error estimator, is linearly convergent with respect to the order of $\QQ$, i.e., $|\ell',k',j'| < |\ell,k,j|$ means that $u_{\ell'}^{k',j'}$ is computed earlier than $u_\ell^{k,j}$ within the (sequential) adaptive loop and $|\ell,k,j| - |\ell',k',j'| \in \N_0$ is the overall number of discretization, symmetrization, and algebraic solver steps in between.
In explicit terms, Theorem~\ref{theorem:aisfem:linear-convergence}  proves the existence of constants $\Clin > 0$ and $0 < \qlin < 1$ as well as an index $\ell_0 \in \N_0$ such that,
for all $(\ell,k,j), (\ell',k',j') \in \QQ \text{ with } |\ell,k,j| > |\ell',k',j'|$ and $\ell' \ge \ell_0$, there holds that
\begin{align}\label{eq:intro:linear-convergence}
\Delta_\ell^{k,j}
\le
\Clin \qlin^{|\ell,k,j| - |\ell',k',j'|} \, \Delta_{\ell'}^{k',j'}.
\end{align}
The threshold level $\ell_0 \in \N_0$ arises from the lack of Galerkin orthogonality with respect to the $a(\cdot,\cdot)$-induced energy norm leading to a more involved analysis.
Second, as shown in Corollary~\ref{corollary:aisfem:linear-convergence}, this implies that, for any $s > 0$, there holds the equivalence
\begin{align}\label{eq:intro:complexity}
\sup_{\substack{(\ell,k,j) \in \QQ }} (\#\TT_\ell)^s \, \Delta_\ell^{k,j} < \infty
\quad \Longleftrightarrow \quad
\sup_{(\ell,k,j) \in \QQ} \Bigl( \sum_{\substack{(\ell',k',j') \in \QQ \\ |\ell',k',j'| \le |\ell,k,j| }} \#\TT_{\ell'} \Bigr)^s \, \Delta_\ell^{k,j} < \infty.
\end{align}
The interpretation of~\eqref{eq:intro:complexity} is that the AISFEM algorithm leads to algebraic convergence rate $s > 0$ with respect to the degrees of freedom (finite left-hand side) if and only if it leads to algebraic convergence rate $s$ with respect to the overall computational cost (finite right-hand side), i.e., with respect to the computational time. Third, extending available results from the literature~\cite{cn2012,ffp2014,bhp2017}, Theorem~\ref{theorem:aisfem:complexity} proves that, for sufficiently small adaptivity parameters, the proposed algorithm has optimal complexity (which follows from optimal rates with respect to the degrees of freedom and \eqref{eq:intro:complexity}).
Finally, we admit that the proposed strategy hinges crucially on the appropriate (sufficiently small) choice of the Zarantonello parameter $\delta > 0$ in \eqref{eq:intro:zarantonello} as well as on the parameter $\lamalg > 0$ in the stopping criterion for the algebraic solver in Algorithm~\ref{algorithm}(i.b.II) below. If these parameters are chosen too large, the proposed method may fail to converge. Besides this restriction, linear convergence \eqref{eq:intro:linear-convergence} is guaranteed for any choice of the other adaptivity parameters $\lamsym, \theta, \Cmark$ (see Algorithm~\ref{algorithm} below).

\subsection*{Outline}

The remainder of the work is organized as follows. Section~\ref{section:abstract} focuses on the setting and underlying assumptions. In Section~\ref{section:algorithm}, we present the AISFEM algorithm in full detail and highlight some of its properties. The main results of this work are presented in Section~\ref{section:main_results}, the proofs of which are given in Section~\ref{section:proofs}.
Numerical experiments in Section~\ref{section:numerics} underline the theoretical results, before the short Section~\ref{section:conclusion} concludes our results and outlines future work.
Throughout, $A \lesssim B$ denotes $A \le c \, B$ with a generic constant $c > 0$ that is independent of the discretization, but may depend on all problem parameters. Moreover, $A \simeq B$ abbreviates $A \lesssim B \lesssim A$. 

\section{Preliminaries}
\label{section:abstract}

In this section, we state all prerequisites to formulate the AISFEM algorithm (Algorithm~\ref{algorithm} in Section~\ref{section:algorithm} below). In particular, we collect the contraction properties of the Zarantonello symmetrization, the algebraic solver, the mesh-refinement strategy, and the required properties of the \textsl{a~posteriori} error estimator.

\subsection{Abstract formulation of the model problem}\label{section:abstract-model-problem}

According to the Rellich compactness theorem \cite[Theorem~5.8.2]{Kufner1977}, $\dual{\KK u}{v} \coloneqq \dual{\bfb \cdot \nabla u + c u}{v}_\Omega$ defines a compact linear operator $\KK \colon \XX \to \XX'$, where we recall that $\XX' = H^{-1}(\Omega)$ is the dual space of $\XX = H^1_0(\Omega)$. With this notation, the weak formulation~\eqref{eq:intro:weakform} takes the more abstract form
\begin{align}\label{eq:intro:abs_weakform}
	b(u^\exact, v) = a(u^\exact, v) + \dual{\KK u^\exact}{v} = F(v)
	\quad \text{for all } v \in \XX.
\end{align}
Since $b(\cdot,\cdot)$ is continuous and elliptic on $\XX$, i.e., there exists $\alpha_0 > 0$ such that
\begin{align}\label{eq:elliptic}
	\alpha_0 \, \norm{u}_{\XX}^2 \le b(u, u)
	\quad \text{for all } u \in \XX,
\end{align}
a simple compactness argument proves that also the principal part $a(\cdot,\cdot)$ is elliptic, i.e., there exists $\alpha_0' > 0$ such that
\begin{align}
	\alpha'_0 \, \norm{u}_{\XX}^2 \le a(u, u)
	\quad \text{for all } u \in \XX;
\end{align}
see, e.g.~\cite[Remark~3]{bhp2017}.
In particular, $a(\cdot, \cdot)$ is a scalar product on $\XX$ and the $a(\cdot,\cdot)$-induced energy norm $\enorm{v}^2 = a(v,v)$ is an equivalent norm on $\XX$, i.e., $\enorm{v} \simeq \norm{v}_{\XX}$ for all $v \in \XX$. Consequently, $b(\cdot, \cdot)$ is also elliptic and continuous with respect to $\enorm{\, \cdot \,}$, i.e., there exist (in practice unknown) constants $0 < \alpha \le L < \infty$ such that
\begin{align}\label{eq:lax-milgram}
	\alpha \, \enorm{u}^2
	\le
	b(u,u)
	\quad \text{and} \quad
	|b(u,v)|
	\le
	L \, \enorm{u} \, \enorm{v}
	\quad \text{for all } u, v \in \XX.
\end{align}
While this setting already guarantees the C\'ea-type quasi-optimality of Galerkin solutions $u_\ell^\exact \in \XX_\ell \subset \XX$ to~\eqref{eq:intro:discrete}, i.e.,
\begin{align}\label{eq:intro:cea}
	\enorm{u^\exact - u_\ell^\exact}
	\le
	\Ccea \, \min_{v_\ell \in \XX_\ell} \enorm{u^\exact - v_\ell}
	\quad \text{with} \quad \Ccea \coloneqq L/\alpha,
\end{align}
we recall from~\cite[Theorem~20]{bhp2017} that adaptivity improves the constant $\Ccea$ in the C\'ea-type estimate~\eqref{eq:intro:cea}: If $\XX_\ell \subseteq \XX_{\ell+1}$ and $\enorm{u^\exact - u_\ell^\exact} \to 0$ as $\ell \to \infty$, then~\eqref{eq:intro:cea} holds with a constant $1 \le C_\ell \le L/\alpha$ and $C_\ell \to 1$ as $\ell \to \infty$.

\begin{remark}
	The contractive Zarantonello symmetrization and hence the results of this work hold in an abstract framework beyond that of the introduction in Section~\ref{section:introduction}. More precisely, the analysis allows for an abstract separable Hilbert space $\XX$ over $\K \in \{\R, \C\}$ with norm $\norm{\cdot}_{\XX}$ and a weak formulation~\eqref{eq:intro:abs_weakform}, where $a(\cdot,\cdot)$ is a Hermitian and continuous sesquilinear form on $\XX$ and $\KK \colon \XX \to \XX'$ is a compact linear operator such that $b(\cdot,\cdot)$ is elliptic and continuous on $\XX$. Provided that a contractive algebraic solver is used (see Section~\ref{sec:algebraic_solver}), the analysis thus also applies to other boundary conditions (e.g., mixed Dirichlet--Neumann--Robin instead of homogeneous Dirichlet boundary conditions used in the introduction). 
\end{remark}

\subsection{Mesh refinement}\label{subsection:mesh-refinement}

From now on, let $\TT_0$ be a given conforming triangulation of $\Omega \subset \R^d$ with $d \ge 1$ which is admissible in the sense of~\cite{stevenson2008} for $d \ge 3$. For mesh refinement, we employ newest vertex bisection (NVB); see \cite{AFFKP13} for $d=1$, \cite{stevenson2008} for $d \ge 2$ and~\cite{kpp2013}
for $d=2$ with non-admissible $\TT_0$. For each triangulation $\TT_\coarse$ and marked elements $\MM_\coarse \subseteq \TT_\coarse$, let $\TT_\fine \coloneqq \refine(\TT_\coarse,\MM_\coarse)$ be the coarsest conforming triangulation where all $T \in \MM_\coarse$ have been refined, i.e.,
$\MM_\coarse \subseteq \TT_\coarse \backslash \TT_\fine$. We write $\TT_\fine \in \T(\TT_\coarse)$ if $\TT_\fine$ results from $\TT_\coarse$ by finitely many steps of refinement and, for $N \in \N_0$, we write $\TT_\fine \in \T_N(\TT_\coarse)$ if
$\TT_\fine \in \T(\TT­­­_{\!\!\coarse})$ and $\# \TT_\fine - \# \TT_\coarse \le N$. To abbreviate notation, let $\T\coloneqq\T(\TT_0)$.
Throughout, each triangulation $\TT_\coarse \in \T$ is associated with a finite-dimensional finite element space $\XX_\coarse \subset \XX$, see~\eqref{eq:def:X_ell}, and refinement $\TT_\fine \in \T(\TT_\coarse)$ implies nestedness $\XX_\coarse \subseteq \XX_\fine \subset \XX$.

Within the setting of AFEM, we will work with a hierarchy $\{ \TT_\ell \}_{\ell \in \N_0}$ generated by NVB refinements from the initial mesh $\TT_0$.

\subsection{\textsl{A~posteriori} error estimator and axioms of adaptivity}\label{subsection:axioms}
For $\TT_\coarse \in \T$, let
\begin{align}
	\begin{split}\label{eq:estimator:generic}
		\eta_\coarse(T; \cdot)\colon \XX_\coarse \to \R_{\ge 0}\quad \text{ for all } T \in \TT_\coarse
	\end{split}
\end{align}
be the local contributions of some computable error estimator.
We define
\begin{equation*}
	\eta_\coarse(\UU_\coarse; v_\coarse)
	\coloneqq
	\Bigl( \sum_{T \in \,\UU_\coarse} \eta_\coarse(T; v_\coarse)^2 \Bigr)^{1/2}
	\quad
	\text{for all } \UU_\coarse \subseteq \TT_\coarse \text{ and } v_\coarse \in \XX_\coarse.
\end{equation*}
To abbreviate notation, let $\eta_\coarse(v_\coarse) \coloneqq \eta_\coarse(\TT_\coarse; v_\coarse)$. Furthermore, we suppose that $\eta_\coarse$ satisfies the following \emph{axioms of adaptivity} from~\cite{axioms} with constants $\Cstab, \Crel$, $ \Cdrel > 0$ and $0 < \qred < 1$ only depending on the dimension $d$, the polynomial degree $m$, and shape regularity of $\TT_0$:

\begin{enumerate}[label={\textbf{(A\arabic*)}\ }, ref={A\arabic*}]
	\item \label{axiom:stability} \textbf{stability:} For all $\TT_\coarse \in \T$ and $\TT_\fine \in \T(\TT_\coarse)$, all $v_\fine \in \XX_\fine$ and all $v_\coarse \in \XX_\coarse$, and every $\UU_\coarse \subseteq \TT_\coarse \cap \TT_\fine$, it holds that
	\begin{align*}
		\vert \eta_\fine (\UU_\coarse, v_\fine) - \eta_\coarse(\UU_\coarse, v_\coarse) \vert \le \Cstab \, \enorm{v_\fine - v_\coarse}.
	\end{align*}
	\item \label{axiom:reduction} \textbf{reduction:} For all $\TT_\coarse \in \T$ and $\TT_\fine \in \T(\TT_\coarse)$, and all $v_\coarse \in \XX_\coarse$, it holds that
	\begin{align*}
		\eta_\fine(\TT_\fine \setminus \TT_\coarse, v_\coarse) \le \qred \, \eta_\coarse(\TT_\coarse \setminus \TT_\fine, v_\coarse).
	\end{align*}
	\item \label{axiom:reliability}  \textbf{reliability:} For all $\TT_\coarse \in \T$, the exact solutions $u^\exact \in \XX$ of~\eqref{eq:intro:weakform} and $u_\coarse^\exact \in \XX_\coarse$ of~\eqref{eq:intro:discrete} satisfy that
	\begin{align*}
		\enorm{u^\exact - u_\coarse^\exact} \le \Crel \, \eta_\coarse(u_\coarse^\exact).
	\end{align*}
	\item \label{axiom:discrete_reliability} \textbf{discrete reliability:}
	For all $\TT_\coarse \in \T$ and $\TT_\fine \in \T(\TT_\coarse)$, the corresponding exact discrete solutions satisfy that
	\begin{align*}
		\enorm{u_h^\star - u_H^\star} \le \Cdrel \, \eta_H(\TT_H \backslash \TT_h, u_H^\star).
	\end{align*}
\end{enumerate}
We note that these axioms \eqref{axiom:stability}--\eqref{axiom:discrete_reliability} are satisfied for the standard residual error estimators; see Section~\ref{section:numerics} below for the model problem~\eqref{eq:intro:model_pb} from the introduction.

\subsection{Contractive Zarantonello symmetrization}
Recall $0 < \alpha \le L$ from~\eqref{eq:lax-milgram}.
It is well known \cite[Section~25.4]{Zeidler1990} that the Zarantonello mapping $\Phi_\coarse(\delta;\cdot)$ introduced in~\eqref{eq:intro:zarantonello} is a contraction for sufficiently small $\delta > 0$, i.e., for $0 < \delta < 2 \alpha/L^2$. Indeed, for all $u_\coarse, w_\coarse \in \XX_\coarse$, there holds
\begin{align}\label{eq:zarantonello:contraction}
	\enorm{\Phi_\coarse(\delta; u_\coarse) - \Phi_\coarse(\delta; w_\coarse)}
	\le
	q[\delta] \, \enorm{u_\coarse - w_\coarse}
	\quad \! \! \text{with} \quad \! \!
	q[\delta] \coloneqq 1 - \delta(2\alpha - \delta L^2) < 1.
\end{align}
Theoretically, $\delta^\exact \coloneqq \alpha/L^2$ minimizes the expression in~\eqref{eq:zarantonello:contraction} resulting in $q[\delta^\exact] = 1 - \alpha^2/L^2$; see, e.g.,~\cite{hw2020}.

\subsection{Contractive algebraic solver}\label{sec:algebraic_solver}

We assume that we have at hand an iterative algebraic solver with iteration step $\Psi_\coarse \colon \XX' \times \XX_\coarse \to \XX_\coarse$. This means, given a linear and continuous functional $G \in \XX'$ and an approximation $w_\coarse \in \XX_\coarse$ of the unique solution $w_\coarse^\exact \in \XX_\coarse$ to
\begin{align}
	a(w_\coarse^\exact, v_\coarse) = G(v_\coarse) \quad \text{for all } v_\coarse \in \XX_\coarse,
\end{align}
the algebraic solver returns an improved $\Psi_\coarse(G; w_\coarse) \in \XX_\coarse$ in the sense that there exists a  constant $0 < \qalg < 1$, which is independent of $G$ and $\XX_\coarse$, such that
\begin{align}\label{eq:contraction:alg}
	\enorm{w_\coarse^\exact - \Psi_\coarse(G; w_\coarse)} \le \qalg \, \enorm{w_\coarse^\exact - w_\coarse}.
\end{align}

To simplify notation when the right-hand side $G$ is complicated or lengthy (as for the Zarantonello iteration~\eqref{eq:intro:zarantonello}), we shall write $\Psi_\coarse(w_\coarse^\exact; \cdot)$ instead of $\Psi_\coarse(G; \cdot)$, even though $w_\coarse^\exact$ is unknown and will never be computed.

In the framework of AFEM, possible examples for such contractive solvers include optimally preconditioned conjugate gradient methods or optimal geometric multigrid methods, see, e.g., \cite{cnx2012} or~\cite{wz2017}, respectively, for approaches focused on lowest-order discretizations and~\cite{imps2022} for an optimal multigrid method which is also robust with respect to the polynomial degree.

\section{Completely adaptive algorithm}
\label{section:algorithm}

In the following, we formulate an inexact adaptive iteratively symmetrized finite element method (AISFEM) in the spirit of \cite{hpsv2021}.
For ease of presentation, we make the following conventions: Algorithm~\ref{algorithm} defines certain terminal indices $\elll$, $\kk[\ell]$, $\jj[\ell,k]$, indicated by underlining. We shall omit the arguments of $\kk$ and $\jj$ if these are clear from the context, e.g., we simply write
\begin{align*}
	u_\ell^{k,\jj} \coloneqq u_\ell^{k,\jj[\ell,k]}
	\quad \text{and} \quad
	u_\ell^{\kk,\jj} \coloneqq u_\ell^{\kk[\ell], \jj[\ell, \kk[\ell]]},
	\quad\text{etc.}
\end{align*}
A similar convention will be used for triple indices, e.g., $(\ell,k,\jj) = (\ell,k, \jj[\ell,k])$, etc.

\begin{algorithm}[adaptive iteratively symmetrized finite element method (AISFEM)]\label{algorithm}
	\textbf{Input:} Initial triangulation $\TT_0$, initial guess $u_0^{0, 0} \coloneqq u_0^{0, \jj} \coloneqq 0$, marking parameters $0 < \theta \le 1$ and $\Cmark \ge 1$, solver parameters $\lamsym, \lamalg > 0$,
	and damping parameter $\delta > 0$. 
	
	\noindent\textbf{Loop:} For $\ell = 0, 1, 2, \dots$, repeat the following steps {\rm(i)--(iv)}:
	\begin{enumerate}
		\item[\rm(i)] For all $k = 1, 2, 3, \dots$, repeat the following steps {\rm(a)--(d)}:
		\begin{enumerate}
			\item[\rm(a)] Define $u_\ell^{k,0} \coloneqq u_\ell^{k-1, \jj}$ and, for purely theoretical reasons, $u_\ell^{k,\star} \coloneqq \Phi_\ell(\delta; u_\ell^{k-1,\jj})$.
			\item[\rm(b)] For all $j=1,2,3, \dots$ repeat the following steps $(\rm I)$--$(\rm II)$:
			\begin{enumerate}
				\item[\normalfont\textrm{(I)} ]  Compute $u_\ell^{k, j} \coloneqq \Psi_\ell(u_\ell^{k,\star}; u_\ell^{k, j-1})$ and $\eta_\ell(T; u_\ell^{k,j})$ for all $T \in \TT_\ell$.
				\item[\normalfont\textrm{(II)} ] Terminate $j$-loop if $\enorm{u_\ell^{k, j} - u_\ell^{k,j-1}} \le \lamalg \, \bigl[\lamsym\eta_{\ell}(u_\ell^{k,j}) + \enorm{u_\ell^{k,j} - u_\ell^{k-1, \jj}} \bigr]$.
			\end{enumerate}
			\item[\rm(c)] Upon termination of the $j$-loop, define $\jj[\ell, k] \coloneqq j$.
			\item[\rm(d)] Terminate $k$-loop if $\enorm{u_\ell^{k, \jj} - u_\ell^{k-1, \jj}} \le \lamsym \, \eta_\ell(u_\ell^{k, \jj})$.
		\end{enumerate}
		\item[\rm(ii)] Upon termination of the $k$-loop, define $\kk[\ell] \coloneqq k$.
		\item[\rm(iii)] Determine $\MM_\ell \subseteq \TT_\ell$ of up to the constant $\Cmark$ minimal cardinality satisfying $\theta \, \eta_\ell(u_\ell^{\kk, \jj})^2 \le \eta_\ell(\MM_\ell; u_\ell^{\kk, \jj})^2$.
		\item[\rm(iv)] Generate $\TT_{\ell+1} \coloneqq \refine(\TT_\ell, \MM_\ell)$ and define $u_{\ell+1}^{0,0} \coloneqq u_{\ell+1}^{0,\jj} \coloneqq u_{\ell+1}^{0, \exact} \coloneqq u_\ell^{\kk,\jj}$.
	\end{enumerate}
	\textbf{Output:} Discrete approximations $u_\ell^{k,j}$ and corresponding error estimators $\eta_\ell(u_\ell^{k, j})$.
\end{algorithm}
\begin{remark}
	To give an interpretation of the stopping criteria in Step~{\rm(i.b.II)} and Step~{\rm(i.d)} of Algorithm~\ref{algorithm}, we note the following: Since the algebraic solver is contractive~\eqref{eq:contraction:alg}, the term $\enorm{u_\ell^{k, j} - u_\ell^{k, j-1}}$ provides \textsl{a~posteriori} error control of the algebraic error $\enorm{u_\ell^{k, \exact} - u_\ell^{k, j}}$, i.e., 
	\[
		\enorm{u_\ell^{k, \exact} - u_\ell^{k, j}} 
		\le
		\frac{\qalg}{1-\qalg} \, \enorm{u_\ell^{k, j} - u_\ell^{k, j-1}}.
	\]
	Moreover, for sufficiently small $\lamalg > 0$ and ongoing Zarantonello iterations, also the perturbed Zarantonello symmetrization is a contraction; see Lemma~\ref{lem:contraction_perturbed} below. With the same reasoning as for the algebraic solver, the term $\enorm{u_\ell^{k, \jj} - u_\ell^{k-1, \jj}} = \enorm{u_\ell^{k, \jj} - u_\ell^{k, 0}}$ thus provides \textsl{a~posteriori} error control of the symmetrization error $\enorm{u_\ell^{\exact} - u_\ell^{k, \star}} \approx \enorm{u_\ell^{\exact} - u_\ell^{k, \jj}}$ (at least if $1 \le k < \kk[\ell]$). With this understanding and the interpretation that the error estimator $\eta_{\ell}(u_\ell^{k, j})$ controls the discretization error $\enorm{u^{\exact} - u_\ell^{\exact}}$ (which is indeed true for $u_\ell^{k, j} = u_\ell^{\kk, \jj}$), the heuristics behind the stopping criteria is as follows: We stop the algebraic solver in Algorithm~\ref{algorithm}{\rm(i.b.II)} provided that the algebraic error $\enorm{u_\ell^{k, \exact} - u_\ell^{k, j}}$ is of the level of the discretization error plus the symmetrization error. Moreover, we stop the (perturbed) Zarantonello symmetrization in Algorithm~\ref{algorithm}{\rm(i.d)} provided that the symmetrization error $\enorm{u_\ell^{\exact} - u_\ell^{k, \jj}}$ is of the level of the discretization error. Up to the factors $\lamalg$ and $\lamsym$, this ensures that all three error sources of $\enorm{u^{\exact} - u_\ell^{\kk, \jj}}$ are equibalanced.
	
\end{remark}
For the analysis of Algorithm~\ref{algorithm}, we recall that the set $\QQ$ from \eqref{eq:intro:index-set} is given by
\begin{align*}
	\QQ \coloneqq \set{(\ell, k, j) \in \N_0^3 \given u_\ell^{k, j} \text{ is used in Algorithm~\ref{algorithm}}}.
\end{align*}
Together with this set, we define
\begin{subequations}\label{eq:def:final-indices}
	\begin{align}
		\elll &\coloneqq \sup\set{\ell \in \N_0 \given (\ell,0,0) \in \QQ} \in \N_0 \cup \{\infty\},
		\\
		\kk[\ell] &\coloneqq \sup\set{k \in \N_0 \given (\ell,k,0) \in \QQ} \in \N_0 \cup \{\infty\},
		\quad \text{whenever } (\ell,0,0) \in \QQ,
		\\
		\jj[\ell,k] &\coloneqq \sup\set{j \in \N_0 \given (\ell,k,j) \in \QQ} \in \N_0 \cup \{\infty\},
		\quad \text{whenever } (\ell,k,0) \in \QQ.
	\end{align}
\end{subequations}
Note that these definitions are consistent with that of Algorithm~\ref{algorithm}, but also cover the cases that the $\ell$-loop, the $k$-loop, or the $j$-loop in the algorithm do not terminate, respectively. We note that formally $\# \QQ = \infty$ and hence either $\elll = \infty$ or $\kk[\elll] = \infty$ or $\jj[\elll, \kk[\elll]] = \infty$, where the latter case is excluded by Lemma~\ref{lemma:termination-j}.

On $\QQ$, we define a total order by
\begin{align*}
	(\ell', k', j') \le (\ell, k, j)
	\quad \Longleftrightarrow \quad
	u_{\ell'}^{k', j'} \text{ is computed in Algorithm~\ref{algorithm} not later than } u_\ell^{k,j}.
\end{align*}
Furthermore, we introduce the total step counter $\vert \cdot, \cdot, \cdot \vert$, defined for all $(\ell, k, j) \in \QQ$, by
\begin{align}\label{eq:stepcounter}
	\vert \ell, k, j \vert \coloneqq \# \set{(\ell', k', j') \in \QQ \given (\ell', k', j') \le (\ell, k, j)}
	\in \N_0.
\end{align}

Our first observation is that the algebraic solver in the innermost loop of Algorithm~\ref{algorithm} always terminates.

\begin{lemma}\label{lemma:termination-j}
	Independently of the adaptivity parameters $\theta$, $\lamsym$, and $\lamalg$, the $j$-loop of Algorithm~\ref{algorithm} always terminates, i.e., $\jj[\ell,k] < \infty$ for all $(\ell,k,0) \in \QQ$.
\end{lemma}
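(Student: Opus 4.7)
The plan is to argue by contradiction, exploiting the geometric convergence of the contractive algebraic solver and the continuity of the estimator (via stability).

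Fix $(\ell,k,0) \in \QQ$ with $k \ge 1$ (so that $u_\ell^{k,\star} = \Phi_\ell(\delta; u_\ell^{k-1,\jj})$ is well-defined). First, iterating the contraction \eqref{eq:contraction:alg}, I would note that
\[
\enorm{u_\ell^{k,\star} - u_\ell^{k,j}} \le \qalg^j \, \enorm{u_\ell^{k,\star} - u_\ell^{k,0}} \xrightarrow{j \to \infty} 0,
\]
so $u_\ell^{k,j} \to u_\ell^{k,\star}$ in $\XX$. In particular, the triangle inequality yields $\enorm{u_\ell^{k,j} - u_\ell^{k,j-1}} \to 0$. Moreover, stability~\eqref{axiom:stability} (applied with $\TT_H = \TT_h = \TT_\ell$ and $\UU_H = \TT_\ell$) gives $\eta_\ell(u_\ell^{k,j}) \to \eta_\ell(u_\ell^{k,\star})$, and continuity of the norm gives $\enorm{u_\ell^{k,j} - u_\ell^{k-1,\jj}} \to \enorm{u_\ell^{k,\star} - u_\ell^{k-1,\jj}}$.

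Now suppose for contradiction that $\jj[\ell,k] = \infty$, i.e., the stopping criterion in Step~{\rm(i.b.II)} fails for every $j \ge 1$. Passing to the limit $j \to \infty$ in the strict reverse inequality produces
\[
0 \ge \lamalg \bigl[ \lamsym \, \eta_\ell(u_\ell^{k,\star}) + \enorm{u_\ell^{k,\star} - u_\ell^{k-1,\jj}} \bigr].
\]
Since $\lamalg, \lamsym > 0$ and both summands on the right are nonnegative, this forces both $\eta_\ell(u_\ell^{k,\star}) = 0$ and $u_\ell^{k,\star} = u_\ell^{k-1,\jj}$.

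The main point is then that this ``degenerate'' case is actually self-defeating: from $u_\ell^{k,\star} = u_\ell^{k-1,\jj} = u_\ell^{k,0}$ together with the contraction~\eqref{eq:contraction:alg}, we get $\enorm{u_\ell^{k,\star} - u_\ell^{k,1}} \le \qalg \, \enorm{u_\ell^{k,\star} - u_\ell^{k,0}} = 0$, hence $u_\ell^{k,1} = u_\ell^{k,\star} = u_\ell^{k,0}$. Therefore $\enorm{u_\ell^{k,1} - u_\ell^{k,0}} = 0$, while the right-hand side of the stopping criterion is also zero, so the criterion~{\rm(i.b.II)} is already satisfied at $j=1$, contradicting $\jj[\ell,k] = \infty$.

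I do not expect any real obstacle here; the only subtlety is handling the degenerate case where the stopping criterion's right-hand side tends to zero, which is resolved simply by observing that in that case the initial guess already equals the target $u_\ell^{k,\star}$ and hence the solver terminates in one step.
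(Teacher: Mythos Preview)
Your proof is correct and follows essentially the same contradiction argument as the paper: assume non-termination, use contraction of the algebraic solver to pass to the limit in the (failed) stopping criterion, deduce $u_\ell^{k,\star} = u_\ell^{k-1,\jj} = u_\ell^{k,0}$, and conclude that the criterion is already met at $j=1$. The only cosmetic difference is that the paper takes a small detour through the fixpoint property of $\Phi_\ell(\delta;\cdot)$ to observe $u_\ell^{k-1,\jj} = u_\ell^\star$, which you (rightly) skip as unnecessary for the contradiction.
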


\begin{proof}
  Let $(\ell,k,0) \in \QQ$.
  We argue by contradiction and assume that the stopping criterion in Algorithm~\ref{algorithm}(i.b.II) always fails and hence $\jj[\ell,k] = \infty$. By assumption~\eqref{eq:contraction:alg}, the algebraic solver is contractive and hence convergent with limit $u_\ell^{k,\star} \coloneqq \Phi_\ell(\delta; u_\ell^{k-1,\jj})$. Moreover, by failure of the stopping criterion in Algorithm~\ref{algorithm}(i.b.II), we thus obtain that
  \begin{align*}
	    \eta_{\ell}(u_\ell^{k,j}) + \enorm{u_\ell^{k,j} - u_\ell^{k-1, \jj}}
	    \lesssim \enorm{u_\ell^{k, j} - u_\ell^{k,j-1}}
	    \xrightarrow{j \to \infty} 0.
	  \end{align*}
  This yields $\enorm{u_\ell^{k,\star} - u_\ell^{k-1, \jj}} = 0$.
  Consequently, $u_\ell^{k-1, \jj}$ is a fixpoint of $\Phi_\ell(\delta; \cdot)$, cf.\ Algorithm~\ref{algorithm}(i.a), and hence $u_\ell^{k-1, \jj} = u_\ell^\exact$ by uniqueness of the fixpoint. In particular, the initial guess $u_\ell^{k,0} = u_\ell^{k-1, \jj} = u_\ell^{k,\star}$ is already the exact solution of the linear Zarantonello system and hence the algebraic solver guarantees that $u_\ell^{k,j} = u_\ell^{k,\star}$ for all $j \in \N_0$. Consequently, the stopping criterion in Algorithm~\ref{algorithm}(i.b.II) will be satisfied for $j = 1$. This contradicts our assumption, and hence we conclude that $\jj[\ell,k] < \infty$.
\end{proof}

\begin{remark}
	For the mathematical tractability, we formulated Algorithm~\ref{algorithm} in a way that $\#\QQ = \infty$. Any practical implementation will aim to provide a sufficiently accurate approximation $u_\ell^{k, j}$ in finite time. More precisely,  Algorithm~\ref{algorithm} will then be terminated after Algorithm~\ref{algorithm}{\rm(i.b.II)} if
	\begin{align}\label{eq1:remark3}
		\eta_{\elll}(u_\elll^{\kk,\jj}) + \enorm{u_\elll^{\kk,\jj} - u_\elll^{\kk-1, \jj}} + \enorm{u_\elll^{\kk,\jj} - u_\elll^{\kk, \jj-1}} \le \tau
	\end{align}
	where $\tau > 0$ is a user-specified tolerance. For $\tau = 0$, finite termination yields that $u_\elll^{\kk,\jj} = u^\exact$ with $\eta_\elll(u_\elll^{\kk,\jj}) = 0$. To see this, note that \eqref{eq1:remark3} implies $u_\elll^{\kk,\star} = u_\elll^{\kk,\jj} = u_\elll^{\kk, \jj-1}$ and $u_\elll^\star = u_\elll^{\kk,\jj} = u_\elll^{\kk-1, \jj}$ by uniqueness of the fixpoint of the contractive solver and the contractive Zarantonello symmetrization, respectively. Finally, the first summand in \eqref{eq1:remark3} states $\eta_\elll(u_\elll^\exact) = \eta_{\elll}(u_\elll^{\kk,\jj}) = 0$ and hence $u_\elll^{\kk,\jj} = u_\elll^\exact = u^\exact$ by reliability~\eqref{axiom:reliability} of the estimator.
\end{remark}

\begin{remark}
	Up to the algebraic stopping criterion in Algorithm~\ref{algorithm}{\rm(i.b.II)}, the AISFEM algorithm coincides with the adaptive algorithm from~\cite{hpsv2021}, where the (perturbed) Zarantonello iteration is employed for an adaptive iteratively linearized finite element method for the solution of an energy minimization problem with strongly monotone nonlinearity in the corresponding Euler--Lagrange equations. However, the present analysis is much more refined than that of~\cite{hpsv2021}: 
	
	{\rm (i)} To guarantee full linear convergence, \cite[Theorem~4]{hpsv2021} requires $\theta$ sufficiently small, $\lamsym$ sufficiently small with respect to $\theta$, and $\lamalg$ sufficiently small with respect to $\lamsym$.
	In contrast, the present analysis proves full linear convergence for arbitrary $0 < \theta \le 1$ and $0 < \lamsym \le 1$, and only requires $\lamalg$ to be sufficiently small
to preserve the contraction of the perturbed Zarantonello iteration (see Lemma~\ref{lem:contraction_perturbed} below in comparison to~\cite[Lemma~6]{hpsv2021}).
	
	{\rm (ii)} Despite the linear model problem, our analytical setting is more involved: the compact perturbation in~\eqref{eq:intro:abs_weakform} prevents the use of energy arguments that guarantee a Pythagorean-type identity in terms of the energy error (see, e.g.,~\cite{hpsv2021, hpw2021}). Instead, we first need to exploit \textsl{a~priori} convergence of Algorithm~\ref{algorithm} (see Lemma~\ref{prop:plain-convergence}) to deduce a quasi-Pythagorean estimate in Lemma~\ref{lemma:quasi-pythagoras}, which then allows proving linear convergence (Theorem~\ref{theorem:aisfem:linear-convergence}). As a consequence (and beyond the results of~\cite{hpsv2021}), this finally yields that, for arbitrary $\theta$ and $\lamsym$, the convergence rates with respect to the number of the degrees of freedom and with respect to the overall computational work coincide (Corollary~\ref{corollary:aisfem:linear-convergence}).
\end{remark}

The following proposition provides a computable upper bound for the energy error $\enorm{u^\exact - u_\ell^{k, j}} $. Since Algorithm~\ref{algorithm} follows the structure of~\cite[Algorithm~1]{hpsv2021}, the proof can be obtained analogously to \cite[Proposition~2]{hpsv2021} and is thus omitted here.

\begin{proposition}[reliable error control]\label{proposition:reliability}
	Suppose that the estimator satisfies \eqref{axiom:stability} and \eqref{axiom:reliability}. Then, for all $(\ell, k, j) \in \QQ$, it holds that
	\begin{align} \label{eq:reliable-error}
		\enorm{u^\exact - u_\ell^{k, j}} \le \Crel' \begin{cases}
			\eta_{\ell}(u_\ell^{k,j}) + \enorm{u_{\ell}^{k, j} - u_{\ell}^{k-1, \jj}}& \\ \hphantom{\eta_{\ell}(u_\ell^{k,j})}+ \enorm{u_{\ell}^{k, j} - u_{\ell}^{k, j-1}} \quad &\text{if } 1 \le k \le \kk[\ell] \text{ and } 1 \le j < \jj[\ell, k], \\
			\eta_{\ell}(u_\ell^{k,\jj}) + \enorm{u_{\ell}^{k, \jj} - u_{\ell}^{k-1, \jj}} \quad &\text{if } 1 \le k \le \kk[\ell] \text{ and } j = \jj[\ell, k], \\
			\eta_{\ell}(u_\ell^{\kk,\jj}) &\text{if } k = \kk[\ell] \text{ and } j = \jj[\ell, \kk], \\
			\eta_{\ell-1}(u_{\ell-1}^{\kk,\jj}) &\text{if } \ell > 0 \text{ and } k = 0.
		\end{cases}
	\end{align}
	The constant $\Crel' >0$ depends only on $\Crel$, $\Cstab$, $\qalg$, $\lamalg$, $\qsym$, and $\lamsym$.
\end{proposition}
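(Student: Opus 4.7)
The plan is a case-by-case analysis built on the three-term triangle split
\[
\enorm{u^\exact - u_\ell^{k,j}} \le \enorm{u^\exact - u_\ell^\exact} + \enorm{u_\ell^\exact - u_\ell^{k,\star}} + \enorm{u_\ell^{k,\star} - u_\ell^{k,j}},
\]
where the three summands are controlled by reliability, the Zarantonello contraction, and the algebraic contraction, respectively. The four cases of~\eqref{eq:reliable-error} differ only in which stopping criteria of Algorithm~\ref{algorithm} are active: as more criteria fire, fewer terms survive on the right-hand side.

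For the discretization summand, reliability~\eqref{axiom:reliability} gives $\enorm{u^\exact - u_\ell^\exact} \le \Crel \, \eta_\ell(u_\ell^\exact)$, and stability~\eqref{axiom:stability} (with $\TT_\coarse = \TT_\fine = \TT_\ell$) gives $\eta_\ell(u_\ell^\exact) \le \eta_\ell(u_\ell^{k,j}) + \Cstab \, \enorm{u_\ell^\exact - u_\ell^{k,j}}$. The algebraic summand is controlled by the standard \textsl{a~posteriori} bound $\enorm{u_\ell^{k,\star} - u_\ell^{k,j}} \le \tfrac{\qalg}{1-\qalg} \, \enorm{u_\ell^{k,j} - u_\ell^{k,j-1}}$, derived from the solver contraction~\eqref{eq:contraction:alg} and the triangle inequality. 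For the symmetrization summand, the exact Zarantonello contraction~\eqref{eq:zarantonello:contraction} applied to $u_\ell^{k,\star} = \Phi_\ell(\delta; u_\ell^{k-1,\jj})$ and $u_\ell^\exact = \Phi_\ell(\delta; u_\ell^\exact)$ gives $\enorm{u_\ell^\exact - u_\ell^{k,\star}} \le q[\delta] \, \enorm{u_\ell^\exact - u_\ell^{k-1,\jj}}$; in the stopped regime $j = \jj[\ell,k]$, I would rather invoke the perturbed Zarantonello contraction from Lemma~\ref{lem:contraction_perturbed} together with a triangle inequality to obtain $\enorm{u_\ell^\exact - u_\ell^{k,\jj}} \le \tfrac{\qsym}{1-\qsym} \, \enorm{u_\ell^{k,\jj} - u_\ell^{k-1,\jj}}$.

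Assembling the three ingredients case by case, Case~1 ($1 \le j < \jj$) is obtained by a direct combination. Case~2 ($j=\jj[\ell,k]$, $1 \le k \le \kk[\ell]$) additionally uses the stopping criterion in Algorithm~\ref{algorithm}(i.b.II) to absorb $\enorm{u_\ell^{k,\jj} - u_\ell^{k,\jj-1}}$ into $\lamsym \, \eta_\ell(u_\ell^{k,\jj}) + \enorm{u_\ell^{k,\jj} - u_\ell^{k-1,\jj}}$, collapsing three summands to two. Case~3 ($k=\kk[\ell]$, $j=\jj[\ell,\kk]$) then uses the stopping criterion in Algorithm~\ref{algorithm}(i.d) to absorb $\enorm{u_\ell^{\kk,\jj} - u_\ell^{\kk-1,\jj}}$ into $\lamsym \, \eta_\ell(u_\ell^{\kk,\jj})$, leaving only the estimator. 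Finally, Case~4 ($\ell > 0$, $k=0$) follows immediately from Case~3 on the previous mesh $\TT_{\ell-1}$, since Algorithm~\ref{algorithm}(iv) sets $u_\ell^{0,0} = u_{\ell-1}^{\kk,\jj}$.

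The one non-mechanical point is the absorption step in the discretization bound: the stability contamination $\Cstab \, \enorm{u_\ell^\exact - u_\ell^{k,j}}$, when combined with the symmetrization split and another triangle inequality, reproduces a copy of $\enorm{u_\ell^\exact - u_\ell^{k,j}}$ on the right-hand side with multiplicative factor $q[\delta]$ (respectively $\qsym$ in Cases~2--4). This copy must be moved to the left and divided away, which is where $q[\delta], \qsym < 1$ is essential and thereby why the smallness of $\delta$ and $\lamalg$ enters the final constant. Beyond this bookkeeping, no variational or geometric subtlety is required, consistent with the paper's remark that the argument is analogous to that of~\cite[Proposition~2]{hpsv2021}.
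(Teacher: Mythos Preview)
Your approach is correct and is precisely the standard argument the paper defers to \cite[Proposition~2]{hpsv2021} (the paper omits its own proof): triangle split, reliability~\eqref{axiom:reliability} plus stability~\eqref{axiom:stability}, the Zarantonello contraction~\eqref{eq:zarantonello:unperturbed}, the algebraic \textsl{a~posteriori} bound, and then the cascade of stopping criteria to pass from Case~1 to Cases~2--4.

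Two minor cleanups. First, do not invoke Lemma~\ref{lem:contraction_perturbed} for Case~2: it only covers $1 \le k < \kk[\ell]$ and would impose the extra hypothesis $\lamalg \le \lamalg^\star$, which the proposition does not assume. The route you already describe in your assembly paragraph---Case~1 followed by the stopping criterion in Algorithm~\ref{algorithm}(i.b.II)---handles all $1 \le k \le \kk[\ell]$ with $j = \jj[\ell,k]$ uniformly. Second, the absorption step lives entirely in bounding $\enorm{u_\ell^\exact - u_\ell^{k,j}}$: combining $\enorm{u_\ell^\exact - u_\ell^{k,\star}} \le \qsym \enorm{u_\ell^\exact - u_\ell^{k-1,\jj}}$ with $\enorm{u_\ell^\exact - u_\ell^{k-1,\jj}} \le \enorm{u_\ell^\exact - u_\ell^{k,j}} + \enorm{u_\ell^{k,j} - u_\ell^{k-1,\jj}}$ yields $(1-\qsym)\,\enorm{u_\ell^\exact - u_\ell^{k,j}} \le \qsym\,\enorm{u_\ell^{k,j} - u_\ell^{k-1,\jj}} + \enorm{u_\ell^{k,\star} - u_\ell^{k,j}}$. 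Once this is established, the stability contamination $\Cstab\,\enorm{u_\ell^\exact - u_\ell^{k,j}}$ is bounded directly by it---no second absorption is needed. In particular, only $\qsym < 1$ (hence $\delta$ sufficiently small) is essential for the argument; $\lamalg$ enters $\Crel'$ merely as a coefficient via the stopping criterion, not through any smallness requirement.
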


\section{Main results}\label{section:main_results}

In the following, we formulate the main results of the present work. We refer to Section~\ref{section:proofs} for the proofs and Section~\ref{section:numerics} for numerical experiments, which underline these theoretical results. First, recall from~\eqref{eq:zarantonello:contraction} that a sufficiently small parameter $\delta > 0$ ensures contraction of the Zarantonello mapping and hence
\begin{align}\label{eq:zarantonello:unperturbed}
	\enorm{u_\ell^\exact - u_\ell^{k,\exact}}
	\le \qsym \, \enorm{u_\ell^\exact - u_\ell^{k-1,\jj}}
	\quad \text{for all } (\ell,k,0) \in \QQ
\end{align}
with $0 < \qsym <1$.
The following theorem states full linear convergence of the quasi-error.

\begin{theorem}[full linear convergence of AISFEM]\label{theorem:aisfem:linear-convergence}
	Suppose that $\delta > 0$ is sufficiently small and that the estimator satisfies~\eqref{axiom:stability}--\eqref{axiom:reliability}. 
Choose $\lamalg^\exact > 0$ depending only on $\qalg$ from \eqref{eq:contraction:alg} and $\qsym$ from \eqref{eq:zarantonello:unperturbed} such that
	\begin{align}\label{eq1:lem:contraction_perturbed}
		0 < \qsymm \coloneqq \frac{\qsym + 2 \, \frac{\qalg}{1-\qalg}\, \lamalg^\exact}{1- 2 \, \frac{\qalg}{1-\qalg}\, \lamalg^\exact} < 1.
	\end{align}
	Then, for arbitrary $0 < \theta \le 1$ and $0 < \lamsym \le 1$, there exists \highlighted{$0 < \lamalg' \le \lamalg^\exact$} such that Algorithm~\ref{algorithm}, for all $0 < \lamalg \le \lamalg'$, 	
guarantees full linear convergence:
	There exist constants $\Clin > 0$ and $0 < \qlin < 1$ as well as an index $\ell_0 \in \N_0$ with $\ell_0 \le \elll$ such that the quasi-error
	\begin{align}\label{eq0:theorem:aisfem:linear-convergence}
		\Delta_\ell^{k,j}
		\coloneqq
		\enorm{u^\exact - u_\ell^{k,j}} + \enorm{u_\ell^{k,\exact} - u_\ell^{k,j}} + \eta_\ell(u_\ell^{k,j})
		\quad \text{for all } (\ell, k, j) \in \QQ
	\end{align}
	satisfies that, for all $(\ell,k,j), (\ell',k',j') \in \QQ$ with $|\ell,k,j| > |\ell',k',j'|$ and $\ell' \ge \ell_0$,
	\begin{align}\label{eq:theorem:aisfem:linear-convergence}
		\Delta_\ell^{k,j}
		\le
		\Clin \qlin^{|\ell,k,j| - |\ell',k',j'|} \, \Delta_{\ell'}^{k',j'}.
	\end{align}
	The constants $\Clin$ and $\qlin$ as well as the index $\ell_0$ depend only on~$\Cstab$, $\Crel$, $\qred$, $\qsym$, $\qalg$, $\theta$, $\lamsym$, $\lamalg$, and $\Ccea = L/\alpha$.
\end{theorem}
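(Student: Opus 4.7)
The plan is to decompose the proof into three layers corresponding to the three nested loops of Algorithm~\ref{algorithm} and then combine the resulting contraction estimates into a single full linear convergence statement with respect to the total step counter $|\cdot,\cdot,\cdot|$. A convenient reformulation of~\eqref{eq:theorem:aisfem:linear-convergence} is tail-summability: linear convergence is equivalent to
\[
\sum_{\substack{(\ell',k',j') \in \QQ \\ |\ell',k',j'| \ge |\ell,k,j|}} \Delta_{\ell'}^{k',j'} \;\le\; C\, \Delta_\ell^{k,j}
\quad \text{for all } (\ell,k,j) \in \QQ \text{ with } \ell \ge \ell_0,
\]
so I would aim to establish this geometric-series type bound.

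First, for fixed $(\ell,k)$, I would show contraction of $\Delta_\ell^{k,j}$ in $j$. The algebraic solver is contractive by~\eqref{eq:contraction:alg}, hence $\enorm{u_\ell^{k,\star}-u_\ell^{k,j}}$ decays geometrically. The energy error $\enorm{u^\exact - u_\ell^{k,j}}$ is controlled by the reliability estimate from Proposition~\ref{proposition:reliability}, and the estimator $\eta_\ell(u_\ell^{k,j})$ can be handled via stability~\eqref{axiom:stability}, producing an inequality of the form $\Delta_\ell^{k,j} \le C(\qalg)^{j-j'}\Delta_\ell^{k,j'} + (\text{lower-order terms in }\lamalg)$. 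Tail-summability in $j$ then follows from summing the geometric series and using the stopping criterion Algorithm~\ref{algorithm}(i.b.II) to absorb the residual terms into the next quasi-error at $k$-index $(k,\jj)$.

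Second, for fixed $\ell$, I would establish contraction in $k$ by relying on Lemma~\ref{lem:contraction_perturbed}, which gives that the \emph{perturbed} Zarantonello iteration contracts with rate $\qsymm < 1$ once $\lamalg \le \lamalg'$. Combining this with axiom~\eqref{axiom:stability} to bound $|\eta_\ell(u_\ell^{k,\jj}) - \eta_\ell(u_\ell^\star)|$ and with the algebraic stopping criterion to control the algebraic contribution, I obtain contraction of $\Delta_\ell^{k,\jj}$ in $k$ up to terms that are again summed geometrically.

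Third, and this is the main obstacle, I would establish contraction of $\Delta_\ell^{\kk,\jj}$ in $\ell$ at the terminal indices. Here the standard argument via Galerkin orthogonality is unavailable because $b(\cdot,\cdot)$ is nonsymmetric and the orthogonality fails with respect to $\enorm{\cdot}$. My plan is to use the quasi-Pythagorean estimate of Lemma~\ref{lemma:quasi-pythagoras}, which bounds $\enorm{u^\exact - u_{\ell+1}^\star}^2 + \mu \enorm{u_{\ell+1}^\star - u_\ell^\star}^2 \le q \enorm{u^\exact - u_\ell^\star}^2 + (\text{lower-order})$ but only for $\ell \ge \ell_0$; this threshold comes from invoking the \textsl{a~priori} convergence $\enorm{u^\exact - u_\ell^\star} \to 0$ of Lemma~\ref{prop:plain-convergence} together with the fact that the C\'ea-constant for adaptive spaces tends to~$1$ as discussed after~\eqref{eq:intro:cea}. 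With quasi-Pythagoras at hand, Dörfler marking Algorithm~\ref{algorithm}(iii) combined with estimator reduction~\eqref{axiom:reduction} and stability~\eqref{axiom:stability} (the classical estimator contraction argument of~\cite{ckns2008}) yields contraction of a suitable linear combination of $\enorm{u^\exact-u_\ell^\star}^2 + \gamma\, \eta_\ell(u_\ell^\star)^2$. Passing from the exact discrete quantities $u_\ell^\star$ to the computed $u_\ell^{\kk,\jj}$ is then done via the stopping criteria of Algorithm~\ref{algorithm}(i.b.II) and (i.d), which guarantee that the symmetrization and algebraic errors at termination are bounded by $\lamsym\,\eta_\ell(u_\ell^{\kk,\jj})$ up to multiplicative factors of $\lamalg$ that can be absorbed for $\lamalg$ small enough.

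Finally, I would combine the three contraction estimates by the following scheme: starting from an arbitrary $(\ell,k,j)\in\QQ$ with $\ell\ge\ell_0$, sum $\Delta_{\ell'}^{k',j'}$ first over the remaining $j'$ in the same $(\ell',k')$-loop using the $j$-contraction, then over the remaining $k'$ within the same $\ell'$-loop using the $k$-contraction applied to $\Delta_{\ell'}^{k',\jj}$, and finally over $\ell' \ge \ell$ using the $\ell$-contraction applied to $\Delta_{\ell'}^{\kk,\jj}$. Each step contributes a finite geometric factor, the cross-terms are swallowed by the stopping criteria, and the final product is a constant $C_{\rm lin}$ times $\Delta_\ell^{k,j}$, giving exactly the tail-summability bound and thus~\eqref{eq:theorem:aisfem:linear-convergence} with some $\qlin\in(0,1)$. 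The hardest technical point, as indicated, is carrying out the third step without Galerkin orthogonality and tracking the constants so that the contraction persists for \emph{arbitrary} $0<\theta\le 1$ and $0<\lamsym\le 1$, which is the key improvement over~\cite{hpsv2021}; the appearance of the threshold $\ell_0$ is the price paid for applying the quasi-Pythagorean estimate in this generality.
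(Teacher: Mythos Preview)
Your proposal is correct and follows essentially the same architecture as the paper: tail-summability over the triple index via geometric contraction in $j$ (algebraic solver), in $k$ (perturbed Zarantonello, Lemma~\ref{lem:contraction_perturbed}), and across $\ell$ (quasi-Pythagoras plus D\"orfler/reduction), with the threshold $\ell_0$ coming from Lemma~\ref{lemma:quasi-pythagoras}.

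Two technical refinements in the paper's execution are worth noting. First, the paper does not treat the $k$- and $\ell$-contractions as separate layers but packages them into a single auxiliary quantity $\Lambda_\ell^k = [\,\enorm{u_\infty^\exact - u_\ell^{k,\jj}}^2 + \nu\,\eta_\ell(u_\ell^{k,\jj})^2\,]^{1/2}$ (Lemma~\ref{lemma:ghps}), proving $\Lambda_\ell^{k+1} \le \qlin \Lambda_\ell^k$ for $k+1<\kk[\ell]$ and $\Lambda_{\ell+1}^0 \le \qlin \Lambda_\ell^{\kk-1}$; the last Zarantonello step $k=\kk$ is absorbed into the mesh-refinement step because the perturbed contraction~\eqref{eq2:lem:contraction_perturbed} fails there (only~\eqref{eq2:lem:contraction_perturbed:kk} holds). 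Second, the quasi-Pythagorean estimate is formulated with the \textsl{a~priori} limit $u_\infty^\exact$ rather than $u^\exact$, and its validity for $\ell\ge\ell_0$ stems from compactness of $\KK$ turning weak into strong convergence (Step~3 of Lemma~\ref{lemma:quasi-pythagoras}), \emph{not} from the C\'ea constant tending to~$1$; the identification $u_\infty^\exact = u^\exact$ is only concluded \emph{after} linear convergence is established.
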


While the proof of Theorem~\ref{theorem:aisfem:linear-convergence} is postponed to Section~\ref{subsection:proof-linear-convergence}, we shall immediately prove the following important consequence of Theorem~\ref{theorem:aisfem:linear-convergence}: Algorithm~\ref{algorithm} guarantees that rates with respect to the number of degrees of freedom coincide with rates with respect to the overall computational cost.

\begin{corollary}\label{corollary:aisfem:linear-convergence}
	Let $s > 0$. Under the assumptions of Theorem~\ref{theorem:aisfem:linear-convergence}, the output of Algorithm~\ref{algorithm} guarantees that
	\begin{align}\label{eq:corollary:aisfem:linear-convergence}
		M(s) \coloneqq \! \sup_{\substack{(\ell,k,j) \in \QQ \\ \ell \ge\ell_0}} (\#\TT_\ell)^s \, \Delta_\ell^{k,j}
		\le \sup_{\substack{(\ell,k,j) \in \QQ \\ \ell \ge \ell_0}}\Bigl( \! \sum_{\substack{(\ell',k',j') \in \QQ \\ |\ell',k',j'| \le |\ell,k,j| \\ \ell' \ge \ell_0}} \! \#\TT_{\ell'}\Bigr)^s \, \Delta_\ell^{k,j}
		\le \frac{\Clin}{\bigl(1 - \qlin^{1/s}\bigr)^{s}} \, M(s).
	\end{align} 
	This yields the equivalence
	\begin{align}\label{eq2:corollary:aisfem:linear-convergence}
		\sup_{\substack{(\ell,k,j) \in \QQ}} (\#\TT_\ell)^s \, \Delta_\ell^{k,j} < \infty
		\quad \Longleftrightarrow \quad
		\sup_{(\ell,k,j) \in \QQ}
		\Bigl(\sum_{\substack{(\ell',k',j') \in \QQ \\ |\ell',k',j'| \le |\ell,k,j|}} \#\TT_{\ell'}\Bigr)^s \, \Delta_\ell^{k,j} < \infty.
	\end{align}
\end{corollary}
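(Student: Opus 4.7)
The plan is to establish both inequalities in~\eqref{eq:corollary:aisfem:linear-convergence} and then deduce the equivalence~\eqref{eq2:corollary:aisfem:linear-convergence} by handling the finitely many pre-threshold indices separately. The first inequality of~\eqref{eq:corollary:aisfem:linear-convergence} is tautological: for each $(\ell,k,j) \in \QQ$ with $\ell \ge \ell_0$, the triple $(\ell,k,j)$ itself lies in the summation set, so $\#\TT_\ell$ appears as one summand.

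For the second (non-trivial) inequality, my strategy is to use linear convergence~\eqref{eq:theorem:aisfem:linear-convergence} to trade the geometric decay of $\Delta_\ell^{k,j}$ for a geometric growth of $\#\TT_{\ell'}$, producing a summable geometric series in the spirit of classical complexity arguments for AFEM. I may assume $M(s) < \infty$ (otherwise the bound is trivial). For each $(\ell',k',j')$ in the admissible sum with $\ell' \ge \ell_0$, the definition of $M(s)$ gives $\#\TT_{\ell'} \le M(s)^{1/s}\,(\Delta_{\ell'}^{k',j'})^{-1/s}$, while rearranging~\eqref{eq:theorem:aisfem:linear-convergence} yields $(\Delta_{\ell'}^{k',j'})^{-1} \le \Clin\,\qlin^{|\ell,k,j|-|\ell',k',j'|}\,(\Delta_\ell^{k,j})^{-1}$. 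Combining the two bounds produces
$$\#\TT_{\ell'} \le M(s)^{1/s}\,\Clin^{1/s}\,(\Delta_\ell^{k,j})^{-1/s}\,\qlin^{(|\ell,k,j|-|\ell',k',j'|)/s}.$$
Since the total step counter $|\cdot,\cdot,\cdot|$ is a bijection from $\QQ$ onto an initial segment of $\N_0$, the exponents $|\ell,k,j|-|\ell',k',j'|$ take each nonnegative integer value at most once as $(\ell',k',j')$ varies, so summation yields a geometric series bounded by $\sum_{n \ge 0}\qlin^{n/s} = 1/(1-\qlin^{1/s})$. Raising the resulting estimate to the $s$-th power and multiplying by $\Delta_\ell^{k,j}$ then produces the claimed constant $\Clin/(1-\qlin^{1/s})^s$.

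For the equivalence~\eqref{eq2:corollary:aisfem:linear-convergence}, the implication~$\Longleftarrow$ is immediate since $\#\TT_\ell$ appears as a summand on the right-hand side. For the converse~$\Longrightarrow$, a finite left-hand side implies in particular $M(s) < \infty$, so by~\eqref{eq:corollary:aisfem:linear-convergence} the cost-weighted supremum restricted to $\ell \ge \ell_0$ is finite. It remains to control the complementary indices $(\ell,k,j) \in \QQ$ with $\ell < \ell_0$. My argument here is that $\ell_0 \in \N_0$ is finite, and for any $\ell < \ell_0 \le \elll$ the $k$-loop must have terminated (since $\TT_{\ell+1}$ was generated), so $\kk[\ell] < \infty$; combined with Lemma~\ref{lemma:termination-j}, this gives $\jj[\ell,k] < \infty$ as well. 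Hence this pre-threshold portion of $\QQ$ is finite, and the corresponding weighted-cost expression is a maximum over finitely many terms and thus finite.

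The main obstacle is essentially bookkeeping: I will need to confirm that the $\QQ$-indexed geometric sum collapses to the standard $\sum_{n \ge 0}\qlin^{n/s}$ via injectivity of the step counter, and that the pre-$\ell_0$ portion of $\QQ$ is genuinely finite so that it does not destroy the converse implication. No additional analytic input beyond the linear convergence estimate~\eqref{eq:theorem:aisfem:linear-convergence} of Theorem~\ref{theorem:aisfem:linear-convergence} is required.
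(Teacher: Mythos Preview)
Your proposal is correct and follows essentially the same argument as the paper: both use $\#\TT_{\ell'} \le M(s)^{1/s}(\Delta_{\ell'}^{k',j'})^{-1/s}$ combined with the rearranged linear convergence estimate to bound the sum by a geometric series, and both conclude the equivalence~\eqref{eq2:corollary:aisfem:linear-convergence} from the finiteness of $\set{(\ell,k,j) \in \QQ \given \ell < \ell_0}$. Your additional justifications (injectivity of the step counter, termination of the $k$- and $j$-loops for $\ell < \ell_0$) merely make explicit what the paper takes for granted.
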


\begin{proof}
	The lower bound in~\eqref{eq:corollary:aisfem:linear-convergence} is obvious. To prove the upper bound, without loss of generality, we may assume that $M(s) < \infty$. 
	By definition of $M(s)$, it follows that
	\begin{align}\label{eq3:corollary:aisfem:linear-convergence}
		\#\TT_{\ell'}
		\le M(s)^{1/s} [\Delta_{\ell'}^{k',j'}]^{-1/s} \quad \text{for } (\ell', k', j') \in \QQ \text{ with } \ell' \ge \ell_0.
	\end{align}
	For $|\ell,k,j| \ge |\ell',k',j'|$ and $\ell' \ge \ell_0$,
	full linear convergence~\eqref{eq:theorem:aisfem:linear-convergence} can be rewritten as
	\begin{align}\label{eq4:corollary:aisfem:linear-convergence}
		[\Delta_{\ell'}^{k',j'}]^{-1/s}
		\le \Clin^{1/s} [\qlin^{1/s}]^{|\ell,k,j| - |\ell',k',j'|} \, [\Delta_\ell^{k,j}]^{-1/s}.
	\end{align}
	The geometric series yields that
	\begin{align*}
		\sum_{\substack{(\ell',k',j') \in \QQ \\ |\ell',k',j'|
				\le
				|\ell,k,j| \\ \ell' \ge \ell_0}} \#\TT_{\ell'}
		\eqreff*{eq3:corollary:aisfem:linear-convergence}\le
		M(s)^{1/s} \, \sum_{\substack{(\ell',k',j') \in \QQ \\ |\ell',k',j'| \le |\ell,k,j| \\ \ell' \ge \ell_0}} [\Delta_{\ell'}^{k',j'}]^{-1/s}
		\eqreff*{eq4:corollary:aisfem:linear-convergence}\le
		M(s)^{1/s} \, \Clin^{1/s} \frac{1}{1 - \qlin^{1/s}} \, [\Delta_\ell^{k,j}]^{-1/s}.
	\end{align*}
	Rearranging this estimate, we see that
	\begin{align*}
		\Bigl( \sum_{\substack{(\ell',k',j') \in \QQ \\ |\ell',k',j'| \le |\ell,k,j| \\ \ell' \ge \ell_0}} \#\TT_{\ell'} \Bigr)^s \, \Delta_\ell^{k,j}
		\le M(s) \, \Clin \, \frac{1}{\bigl(1 - \qlin^{1/s}\bigr)^s}.
	\end{align*}
	Taking the supremum over all $(\ell,k,j) \in \QQ$ with $\ell \ge \ell_0$, we prove the second estimate in~\eqref{eq:corollary:aisfem:linear-convergence}. Moreover,
	\begin{align*}
		\QQ \backslash \set{(\ell,k,j) \in \QQ \given \ell \ge \ell_0}
		= \set{(\ell,k,j) \in \QQ \given \ell < \ell_0}
		\quad \text{is finite},
	\end{align*}
	i.e., the sets over which we compute the suprema in~\eqref{eq:corollary:aisfem:linear-convergence}--\eqref{eq2:corollary:aisfem:linear-convergence} differ only by finitely many index triples.
	This and~\eqref{eq:corollary:aisfem:linear-convergence} thus prove the equivalence in~\eqref{eq2:corollary:aisfem:linear-convergence}.
\end{proof}

To present our second main result on quasi-optimal computational cost, we first introduce the notion of approximation classes. For $\TT \in \T $ and $s>0$, define
\begin{align}\label{eq:def_approx_class}
	\norm{u^\exact}_{\A_s (\TT)} \coloneqq \sup_{N \in \N_0} \Bigl( \bigl( N+1 \bigr)^s \min_{\TT_{\rm opt} \in \T_N (\TT) } \bigl[ \enorm{u^\star - u^\star_{\rm opt}} + \eta_{\rm opt} (u^\star_{\rm opt}) \bigr] \Bigr),
\end{align}
with $u_{\rm opt}^\exact$ and $\eta_{\rm opt} $ denoting the exact discrete solution and the estimator on the optimal triangulation $\TT_{\rm opt} \in \T_N (\TT)$, respectively. When~\eqref{eq:def_approx_class} is finite, this means that a decrease of the error plus estimator with rate $s$ is possible along optimal meshes obtained by refining $\TT$.

\def\Calg{C_{\rm alg}}
\begin{theorem}[optimal computational complexity]\label{theorem:aisfem:complexity}
	Suppose that $\delta > 0$ is sufficiently small and that the estimator satisfies \eqref{axiom:stability}--\eqref{axiom:discrete_reliability}. 
Let $0 < \theta < \theta^\exact \coloneqq (1+ \Cstab^2 \, \Cdrel^2)^{-1} < 1$.
Define  $\lamsym^\exact \coloneqq \min\{1, \Calg^{-1}\,\Cstab^{-1}\}$, where
	\begin{equation}\label{eq:def:Calg}
				\Calg \coloneqq \frac{1}{1-\qsym} \, \Bigl(\frac{2 \, 
				\qalg}{1-\qalg} \, \lamalg^\exact + \qsym \Bigr). \tag{Calg}
	\end{equation}%
	 Choose $0< \lamsym < \lamsym^\exact$ sufficiently small such that 
	\begin{equation}\label{eqxx:theorem:aisfem:complexity}
		0 < \thetamark \coloneqq \Bigl( \frac{\theta^{1/2}  + \, \lamsym / \lamsym^\exact}{1-\lamsym / \lamsym^\exact} \Bigr)^{2} < \theta^\exact.
	\end{equation}
	Then, for any $0 < \lamalg \le \lamalg'$ with $\lamalg' > 0$ from Theorem~\ref{theorem:aisfem:linear-convergence},
	Algorithm~\ref{algorithm} guarantees, for all $s > 0$, that
	\begin{subequations}\label{eq:theorem:aisfem:complexity}
		\begin{align}
			\copt \, \norm{u^\exact}_{\A_s(\TT_{0})}
			&\le \sup_{\substack{(\ell,k,j) \in \QQ}} \Bigl(\sum_{\substack{(\ell',k',j') \in \QQ \\ |\ell',k',j'| \le |\ell,k,j|}} \#\TT_{\ell'}\Bigr)^s \, \Delta_\ell^{k,j}, \\
			\label{eqx:theorem:aisfem:complexity}
			\sup_{\substack{(\ell,k,j) \in \QQ \\ \ell \ge \ell_0}} \Bigl(\sum_{\substack{(\ell',k',j') \in \QQ \\ |\ell',k',j'| \le |\ell,k,j| \\ \ell' \ge \ell_0}} \#\TT_{\ell'}\Bigr)^s \, \Delta_\ell^{k,j} &\le \Copt \,  \max\{\norm{u^\exact}_{\A_s(\TT_{\ell_0})}, \Delta_{\ell_0}^{0,0}\}.
		\end{align}
	\end{subequations}
	where $\ell_0 \in \N$ is the index from Theorem~\ref{theorem:aisfem:linear-convergence}.
	The constant $\copt > 0$ depends only on $\Ccea = L/\alpha$, $\Cstab$, $\Crel$, $s$, and the use of NVB refinement; the constant $\Copt > 0$ depends only on~$\Cstab$, $\Cdrel$, $\Cmark$, $\Ccea = L/\alpha$, $\Crel'$, $\Clin$, $\qlin$, $\# \TT_{\ell_0}$, $\qred$, $\lamsym$, $\qsym$, $\theta$, $s$, and the use of NVB refinement.
	In particular, this proves the equivalence
	\begin{align}\label{eq2:theorem:aisfem:complexity}
		\norm{u^\exact}_{\A_s(\TT_{0})} < \infty
		\quad \Longleftrightarrow \quad
		\sup_{(\ell,k,j) \in \QQ}
		\Bigl(\sum_{\substack{(\ell',k',j') \in \QQ \\ |\ell',k',j'| \le |\ell,k,j|}} \#\TT_{\ell'}\Bigr)^s \, \Delta_\ell^{k,j} < \infty,
	\end{align}
	which yields optimal complexity of Algorithm~\ref{algorithm}.
\end{theorem}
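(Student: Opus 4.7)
The plan is to split the proof in two halves: first, to prove the mesh-based version of optimality, namely $(\#\TT_\ell)^s \, \Delta_\ell^{k,j} \lesssim \max\{\norm{u^\exact}_{\A_s(\TT_{\ell_0})}, \Delta_{\ell_0}^{0,0}\}$ for all $(\ell,k,j) \in \QQ$ with $\ell \ge \ell_0$, and then to apply Corollary~\ref{corollary:aisfem:linear-convergence} to upgrade this mesh-based bound to the cost-based bound~\eqref{eqx:theorem:aisfem:complexity}. The lower bound~\eqref{eq:theorem:aisfem:complexity} is the easier part, relying on reliability~\eqref{axiom:reliability} together with structural properties of NVB; the upper bound follows the D\"orfler-marking-optimality paradigm of~\cite{ckns2008,cn2012,ffp2014,bhp2017}, adapted to the present inexact setting.

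\textbf{Lower bound.} For any $N \in \N_0$, I would pick the largest $\ell \ge \ell_0$ with $\#\TT_\ell - \#\TT_0 \le N$, which makes $\TT_\ell$ a valid competitor in $\T_N(\TT_0)$ while the NVB-increase bound yields $N+1 \lesssim \#\TT_\ell$. Combining reliability~\eqref{axiom:reliability} with stability~\eqref{axiom:stability} and the perturbed-Zarantonello contraction of Lemma~\ref{lem:contraction_perturbed} applied at the terminated index $(\ell,\kk,\jj)$ then gives $\enorm{u^\exact - u_\ell^\exact} + \eta_\ell(u_\ell^\exact) \lesssim \Delta_\ell^{\kk,\jj} \lesssim \Delta_\ell^{k,j}$, where the last estimate uses full linear convergence from Theorem~\ref{theorem:aisfem:linear-convergence}. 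Using $\#\TT_\ell \le \sum_{|\ell',k',j'|\le|\ell,k,j|} \#\TT_{\ell'}$ and taking the supremum over $N$ yields~\eqref{eq:theorem:aisfem:complexity}, after absorbing the finitely many pre-$\ell_0$ terms into $\copt$.

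\textbf{Upper bound.} The core ingredient is a comparison (``optimality of D\"orfler marking'') lemma: given $\ell \ge \ell_0$, one chooses, for an appropriate $N$, an optimal refinement $\TT_{\rm opt} \in \T_N(\TT_\ell)$ with $\enorm{u^\exact - u_{\rm opt}^\exact} + \eta_{\rm opt}(u_{\rm opt}^\exact) \lesssim \norm{u^\exact}_{\A_s(\TT_{\ell_0})} (N+1)^{-s}$, and uses discrete reliability~\eqref{axiom:discrete_reliability} together with $\theta < \theta^\exact = (1+\Cstab^2 \Cdrel^2)^{-1}$ to show that the set $\TT_\ell \setminus \TT_{\rm opt}$ already satisfies a D\"orfler-type bound for the exact estimator $\eta_\ell(u_\ell^\exact)$. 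Transferring this bound to the computed $u_\ell^{\kk,\jj}$ uses stability~\eqref{axiom:stability} and introduces a perturbation of order $\Cstab \, \enorm{u_\ell^\exact - u_\ell^{\kk,\jj}}$; this is absorbed by exploiting the $k$-loop stopping criterion Algorithm~\ref{algorithm}(i.d) and Lemma~\ref{lem:contraction_perturbed}, which is precisely where the restrictions $0 < \lamsym < \lamsym^\exact$ and~\eqref{eqxx:theorem:aisfem:complexity} enter to guarantee the effective marking parameter $\thetamark < \theta^\exact$. Quasi-minimality of the marking in Algorithm~\ref{algorithm}(iii) then produces $\#\MM_\ell \lesssim \bigl(\Delta_\ell^{\kk,\jj}\bigr)^{-1/s} \, \norm{u^\exact}_{\A_s(\TT_{\ell_0})}^{1/s}$. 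Summing via NVB's mesh-closure estimate $\#\TT_\ell - \#\TT_{\ell_0} \lesssim \sum_{\ell_0 \le \ell' < \ell} \#\MM_{\ell'}$ and invoking full linear convergence in the form $(\Delta_{\ell'}^{\kk,\jj})^{-1/s} \lesssim \qlin^{(\ell-\ell')/s} \, (\Delta_\ell^{\kk,\jj})^{-1/s}$ turns the right-hand side into a geometric series, from which $(\#\TT_\ell)^s \, \Delta_\ell^{\kk,\jj} \lesssim \max\{\norm{u^\exact}_{\A_s(\TT_{\ell_0})}, \Delta_{\ell_0}^{0,0}\}$ follows. A final application of~\eqref{eq:theorem:aisfem:linear-convergence} extends this bound from $(\kk,\jj)$ to arbitrary $(k,j)$, and Corollary~\ref{corollary:aisfem:linear-convergence} finally converts the mesh-based bound into the cost-based bound~\eqref{eqx:theorem:aisfem:complexity}; the equivalence~\eqref{eq2:theorem:aisfem:complexity} then combines the two parts.

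\textbf{Main obstacle.} The hardest step is the transfer of the D\"orfler criterion from $u_\ell^\exact$ to the computable $u_\ell^{\kk,\jj}$ while preserving the strict inequality $\thetamark < \theta^\exact$: it requires a joint tuning of $\theta$, $\lamsym$, and $\lamalg$ through the perturbed contraction in Lemma~\ref{lem:contraction_perturbed}, and is the very reason for the specific expression~\eqref{eqxx:theorem:aisfem:complexity} in the statement. Moreover, because the nonsymmetric form $b(\cdot,\cdot)$ does not admit an energy-norm Galerkin orthogonality, the perturbation estimates only go through once $\ell \ge \ell_0$, i.e., once the generalized C\'ea constant is close enough to $1$ (cf.~\cite[Theorem~20]{bhp2017}).
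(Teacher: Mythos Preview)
Your proposal is correct and mirrors the paper's proof: both first establish the mesh-based bounds $(\#\TT_\ell)^s \Delta_\ell^{k,j} \lesssim \max\{\norm{u^\exact}_{\A_s(\TT_{\ell_0})}, \Delta_{\ell_0}^{0,0}\}$ via the comparison lemma \cite[Lemma~4.14]{axioms}, the D\"orfler transfer through Lemma~\ref{lem:estimator-equivalence}, the mesh-closure estimate, and the geometric series driven by full linear convergence, and then upgrade to the cost-weighted supremum via Corollary~\ref{corollary:aisfem:linear-convergence}. Two minor deviations are worth noting: the paper obtains the lower bound more directly from the C\'ea lemma~\eqref{eq:intro:cea} and stability~\eqref{axiom:stability} (so no appeal to linear convergence and no restriction $\ell\ge\ell_0$ is needed there), and the index $\ell_0$ enters only through Theorem~\ref{theorem:aisfem:linear-convergence} (via the quasi-Pythagorean estimate of Lemma~\ref{lemma:quasi-pythagoras}), not through the D\"orfler-transfer step itself, which holds for every $\ell$.
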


The proof is postponed to Section~\ref{subsection:proof:complexity}.

\section{Proofs}
\label{section:proofs}

\subsection{Contraction of perturbed Zarantonello symmetrization}

Recall that for $\delta < 2\, \delta^\exact = 2 \, \alpha/L^2$, the Zarantonello mapping is a contraction~\eqref{eq:zarantonello:contraction}. However, Algorithm~\ref{algorithm} does not compute $u_\ell^{k,\star} \coloneqq \Phi_\ell(\delta; u_\ell^{k-1,\jj})$ exactly, but relies on an approximation $u_\ell^{k,\jj} \approx u_\ell^{k,\star}$.
The next lemma states that, for a sufficiently small stopping parameter $\lamalg > 0$ in Algorithm~\ref{algorithm}, the Zarantonello symmetrization remains a contraction under this perturbation (up to the final iteration). Its proof essentially follows along the lines of~\cite[Lemma~6]{hpsv2021}. However, the present work considers a stopping criterion of the algebraic solver in Algorithm~\ref{algorithm}(i.b.II) which allows to choose $\lamalg$ independently of $\lamsym$.

\begin{lemma}\label{lem:contraction_perturbed}
	Let $\lamalg^\exact > 0$ and $0 < \qsymm < 1$ as in Theorem~\ref{theorem:aisfem:linear-convergence}. Then, for all stopping parameters $0 < \lamalg \le \lamalg^\exact$ and $\lamsym > 0$,
	it holds that
	\begin{align}\label{eq2:lem:contraction_perturbed}
		\enorm{u_\ell^\exact - u_\ell^{k, \jj}} \le \qsymm \, \enorm{u_\ell^\exact - u_\ell^{k-1, \jj}}
		\quad \text{for all } (\ell,k,\jj) \in \QQ \text{ with } 1 \le k < \kk[\ell].
	\end{align}
	Moreover, for $k = \kk[\ell]$, it holds that
\begin{equation}\label{eq2:lem:contraction_perturbed:kk}
 \enorm{u_\ell^\exact - u_\ell^{\kk,\jj}}
 \le
 \qsym \, \enorm{u_\ell^\exact - u_\ell^{\kk-1,\jj}} + \frac{2 \, \qalg}{1-\qalg} \, \lamalg \, \lamsym \, \eta_{\ell}(u_\ell^{\kk, \jj})
 \quad \text{for all } (\ell,\kk,\jj) \in \QQ.\tag{5.1$^+$}
\end{equation}%
\end{lemma}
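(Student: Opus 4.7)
My plan is to reduce both inequalities to a single computation and then split on whether the outer stopping criterion in Algorithm~\ref{algorithm}(i.d) has just been triggered ($k=\kk[\ell]$) or has failed ($k < \kk[\ell]$). The starting point is the triangle inequality
\[
\enorm{u_\ell^\exact - u_\ell^{k,\jj}}
\le
\enorm{u_\ell^\exact - u_\ell^{k,\star}} + \enorm{u_\ell^{k,\star} - u_\ell^{k,\jj}},
\]
where the first term is controlled by the unperturbed contraction~\eqref{eq:zarantonello:unperturbed}, yielding
$\enorm{u_\ell^\exact - u_\ell^{k,\star}} \le \qsym \, \enorm{u_\ell^\exact - u_\ell^{k-1,\jj}}$, and the second term is the algebraic error of the inner solver applied to the Zarantonello system with exact solution $u_\ell^{k,\star}$. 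By contractivity~\eqref{eq:contraction:alg} and a standard geometric-series argument, the latter satisfies
\[
\enorm{u_\ell^{k,\star} - u_\ell^{k,\jj}}
\le \frac{\qalg}{1-\qalg}\, \enorm{u_\ell^{k,\jj} - u_\ell^{k,\jj-1}}.
\]

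Next, I would invoke the inner stopping criterion Algorithm~\ref{algorithm}(i.b.II), which holds at $j=\jj[\ell,k]$ and reads
\[
\enorm{u_\ell^{k,\jj} - u_\ell^{k,\jj-1}}
\le
\lamalg \, \bigl[\lamsym \, \eta_\ell(u_\ell^{k,\jj}) + \enorm{u_\ell^{k,\jj} - u_\ell^{k-1,\jj}}\bigr].
\]
For the easier case $k=\kk[\ell]$, the outer stopping criterion Algorithm~\ref{algorithm}(i.d) additionally holds, so $\enorm{u_\ell^{\kk,\jj} - u_\ell^{\kk-1,\jj}} \le \lamsym \, \eta_\ell(u_\ell^{\kk,\jj})$. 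Plugging this back gives the bound $\enorm{u_\ell^{\kk,\jj} - u_\ell^{\kk,\jj-1}} \le 2\lamalg\lamsym\,\eta_\ell(u_\ell^{\kk,\jj})$, and the claimed estimate~\eqref{eq2:lem:contraction_perturbed:kk} follows directly.

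The more delicate case is $1 \le k < \kk[\ell]$. Here the outer stopping criterion has \emph{failed}, hence $\lamsym \, \eta_\ell(u_\ell^{k,\jj}) < \enorm{u_\ell^{k,\jj} - u_\ell^{k-1,\jj}}$, which lets me absorb the estimator term and obtain
\[
\enorm{u_\ell^{k,\jj} - u_\ell^{k,\jj-1}}
\le
2\lamalg \, \enorm{u_\ell^{k,\jj} - u_\ell^{k-1,\jj}}
\le
2\lamalg \, \bigl[\enorm{u_\ell^\exact - u_\ell^{k,\jj}} + \enorm{u_\ell^\exact - u_\ell^{k-1,\jj}}\bigr],
\]
using the triangle inequality in the last step. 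Combining all estimates yields
\[
\enorm{u_\ell^\exact - u_\ell^{k,\jj}}
\le
\Bigl(\qsym + \tfrac{2\qalg}{1-\qalg}\,\lamalg\Bigr) \enorm{u_\ell^\exact - u_\ell^{k-1,\jj}}
+ \tfrac{2\qalg}{1-\qalg}\,\lamalg\, \enorm{u_\ell^\exact - u_\ell^{k,\jj}}.
\]
Since $\lamalg \le \lamalg^\exact$ and $\lamalg^\exact$ is chosen in \eqref{eq1:lem:contraction_perturbed} precisely so that the coefficient $1-\tfrac{2\qalg}{1-\qalg}\lamalg^\exact$ in front of $\enorm{u_\ell^\exact - u_\ell^{k,\jj}}$ remains positive, I can rearrange and obtain the contraction with factor $\qsymm$ as defined in \eqref{eq1:lem:contraction_perturbed}, which is exactly~\eqref{eq2:lem:contraction_perturbed}.

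The only subtle point — the main obstacle — is the absorption step for $k < \kk[\ell]$: it hinges on the failure of the outer criterion providing a lower bound on $\enorm{u_\ell^{k,\jj} - u_\ell^{k-1,\jj}}$ in terms of $\lamsym\,\eta_\ell(u_\ell^{k,\jj})$, and on choosing $\lamalg^\exact$ small enough (independently of $\lamsym$, in contrast to \cite[Lemma~6]{hpsv2021}) to keep $\qsymm < 1$; apart from this, everything reduces to triangle inequalities and the a~posteriori algebraic error bound.
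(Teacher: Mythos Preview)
Your proposal is correct and follows essentially the same approach as the paper's proof: the same triangle-inequality split, the same use of the unperturbed Zarantonello contraction~\eqref{eq:zarantonello:unperturbed}, the same \textsl{a~posteriori} algebraic bound $\enorm{u_\ell^{k,\star}-u_\ell^{k,\jj}}\le\frac{\qalg}{1-\qalg}\enorm{u_\ell^{k,\jj}-u_\ell^{k,\jj-1}}$, and the same case distinction on whether the outer stopping criterion~(i.d) has been met or not, with the final absorption and rearrangement matching the paper line by line. The only cosmetic difference is that you call the algebraic \textsl{a~posteriori} bound a ``geometric-series argument'' (it is really one contraction step plus a triangle inequality), and you leave implicit the monotonicity of $\lambda\mapsto(\qsym+\tfrac{2\qalg}{1-\qalg}\lambda)/(1-\tfrac{2\qalg}{1-\qalg}\lambda)$ needed to pass from $\lamalg\le\lamalg^\exact$ to the factor $\qsymm$; the paper is equally brief on this point.
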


\begin{proof}
Let $(\ell,k,\jj) \in \QQ$ and suppose first that $1 \le k < \kk[\ell]$.
	By using the triangle inequality and the contraction~\eqref{eq:zarantonello:unperturbed} of the unperturbed Zarantonello iteration, we obtain that
	\begin{align}\label{eq1:proof:lemma9}
		\enorm{u_\ell^\exact - u_\ell^{k, \jj}} \le \enorm{u_\ell^\exact - u_\ell^{k, \exact}} + \enorm{u_\ell^{k, \exact} - u_\ell^{k, \jj}}
		\eqreff*{eq:zarantonello:unperturbed}\le \qsym  \, \enorm{u_\ell^\exact - u_\ell^{k-1, \jj}} +  \enorm{u_\ell^{k,\exact} - u_\ell^{k, \jj}}.
	\end{align}
	It remains to treat the algebraic error term and to show that it is sufficiently contractive. We use the contraction~\eqref{eq:contraction:alg} of the algebraic solver, i.e.,
	\begin{align}\label{eq:algebra:contraction}
		\enorm{u_\ell^{k,\star} - u_\ell^{k,j}}
		\le \qalg \, \enorm{u_\ell^{k,\star} - u_\ell^{k,j-1}}
		\quad \text{for all } (\ell,k,j) \in \QQ \text{ with } j \ge 1,
	\end{align}%
	the met algebraic stopping criterion in Algorithm~\ref{algorithm}(i.b.II), and the not met stopping criterion in Algorithm~\ref{algorithm}(i.d) to obtain that
	\begin{align*}
		\enorm{u_\ell^{k,\exact} - &u_\ell^{k, \jj}}
		\eqreff*{eq:algebra:contraction}\le \frac{\qalg}{1- \qalg} \, \enorm{u_\ell^{k, \jj} - u_\ell^{k, \jj -1}}
		\stackrel{\rm (i.b.II)}\le \lamalg \, \frac{\qalg}{1- \qalg} \, \bigl[\lamsym \, \eta_{\ell}(u_\ell^{k,\jj}) + \enorm{u_\ell^{k,\jj} - u_\ell^{k-1, \jj}} \bigr] \\
		&\stackrel{\rm(i.d)}< 2 \, \lamalg \, \frac{\qalg}{1- \qalg} \,  \enorm{u_\ell^{k,\jj} - u_\ell^{k-1, \jj}}
		\le 2 \, \lamalg\, \frac{\qalg}{1- \qalg} \,\bigl[  \enorm{u_\ell^{\exact} - u_\ell^{k,\jj}} + \enorm{u_\ell^{\exact} - u_\ell^{k-1, \jj}} \bigr].
	\end{align*}
	Combining the last estimate with \eqref{eq1:proof:lemma9} and rearranging the terms lead us to
	\begin{align*}
		\enorm{u_\ell^{\exact} - u_\ell^{k, \jj}}
		\le \frac{\qsym + 2 \, \lamalg \, \frac{\qalg}{1- \qalg}}{1-2 \, \lamalg  \, \frac{\qalg}{1- \qalg}} \, \enorm{u_\ell^{\exact} - u_\ell^{k-1, \jj}} 
		\eqreff{eq1:lem:contraction_perturbed}\le \qsymm \, \enorm{u_\ell^\exact - u_\ell^{k-1, \jj}}.
	\end{align*}
	This concludes the proof of~\eqref{eq2:lem:contraction_perturbed}.
	
Now suppose that $k = \kk[\ell]$. By the met algebraic stopping criterion in Algorithm~\ref{algorithm}(i.b.II) followed by the met stopping criterion of the Zarantonello iteration in Algorithm~\ref{algorithm}(i.d), we obtain that
	\begin{equation*}
		\enorm{u_\ell^{\kk, \jj} - u_\ell^{\kk, \jj-1}} 
		\stackrel{\rm(i.b.II)}\le 
		\lamalg \, \bigl[\lamsym \, \eta_{\ell}(u_\ell^{\kk, \jj}) + \enorm{u_\ell^{\kk, \jj} - u_\ell^{\kk-1, \jj}}\bigr] 
		\stackrel{\rm(i.d)}\le 2 \, \lamalg \, \lamsym \, \eta_{\ell}(u_\ell^{\kk, \jj}).
	\end{equation*}
	Together with the contraction~\eqref{eq:algebra:contraction} of the algebraic solver, this yields that
	\begin{equation}\label{eq:step1:lemma}
		\enorm{u_\ell^{\kk, \exact} - u_\ell^{\kk, \jj}} 
				\eqreff{eq:algebra:contraction}\le 
		\frac{\qalg}{1-\qalg} \, \enorm{u_\ell^{\kk, \jj} - u_\ell^{\kk, \jj-1}}
		\le 
		\frac{2 \, \qalg}{1-\qalg} \, \lamalg \, \lamsym \, \eta_{\ell}(u_\ell^{\kk, \jj}).
	\end{equation}
By contraction~\eqref{eq:zarantonello:unperturbed} of the unperturbed Zarantonello iteration, we obtain that
\begin{align*}
 \enorm{u_\ell^\exact - u_\ell^{\kk,\jj}}
 &\le
 \enorm{u_\ell^\exact - u_\ell^{\kk,\star}}
 + \enorm{u_\ell^{\kk,\star} - u_\ell^{\kk,\jj}}
 \\& 
 \eqreff*{eq:zarantonello:unperturbed}\le
 \qsym \, \enorm{u_\ell^\exact - u_\ell^{\kk-1,\jj}} + \frac{2 \, \qalg}{1-\qalg} \, \lamalg \, \lamsym \, \eta_{\ell}(u_\ell^{\kk, \jj}).
\end{align*} 
This concludes also the proof of~\eqref{eq2:lem:contraction_perturbed:kk}.
\end{proof}

An important consequence of the contraction~\eqref{eq2:lem:contraction_perturbed} of the perturbed Zarantonello iteration is that $\kk[\elll] = \infty$ implies that the exact solution is already discrete $u^\exact = u_\elll^\exact \in \XX_\elll$.

\begin{lemma}\label{lemma:new:ghps}
	Suppose that the estimator satisfies stability~\eqref{axiom:stability} and reliability~\eqref{axiom:reliability},
	and that the perturbed Zarantonello iteration is contractive~\eqref{eq2:lem:contraction_perturbed}. Then, $\elll < \infty$ implies that $\kk[\elll] = \infty$ as well as $u^\exact = u_\elll^\exact$ with $\eta_\elll(u_\elll^\exact) = 0$.
\end{lemma}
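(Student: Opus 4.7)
The plan is to chain together three ingredients: the termination of the inner algebraic loop (Lemma~\ref{lemma:termination-j}), the contraction~\eqref{eq2:lem:contraction_perturbed} of the perturbed Zarantonello iteration, and the estimator axioms~\eqref{axiom:stability} and~\eqref{axiom:reliability}. Once these are lined up, the conclusion will follow by passing to the limit $k \to \infty$ at the final level $\elll$.

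First, I would show that $\elll < \infty$ forces $\kk[\elll] = \infty$. Since $\#\QQ = \infty$ by construction and Lemma~\ref{lemma:termination-j} guarantees $\jj[\elll,k] < \infty$ for every admissible $k$, the only way that Algorithm~\ref{algorithm} can fail to produce the next triangulation $\TT_{\elll+1}$ is that the Zarantonello $k$-loop at level $\elll$ never terminates. Equivalently, the stopping criterion in Algorithm~\ref{algorithm}(i.d) is violated for every $k \ge 1$, giving
\begin{equation*}
 \lamsym \, \eta_\elll(u_\elll^{k, \jj}) < \enorm{u_\elll^{k, \jj} - u_\elll^{k-1, \jj}}
 \quad \text{for all } k \ge 1.
\end{equation*}

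Second, since $\kk[\elll] = \infty$, the contraction~\eqref{eq2:lem:contraction_perturbed} is available for every $k \ge 1$. Iterating it in $k$ yields
\begin{equation*}
 \enorm{u_\elll^\exact - u_\elll^{k, \jj}}
 \le
 \qsymm^{\,k} \, \enorm{u_\elll^\exact - u_\elll^{0, \jj}}
 \xrightarrow{k \to \infty} 0,
\end{equation*}
so $u_\elll^{k,\jj} \to u_\elll^\exact$ in the energy norm. The triangle inequality then gives $\enorm{u_\elll^{k, \jj} - u_\elll^{k-1, \jj}} \to 0$, and the failed stopping criterion above forces $\eta_\elll(u_\elll^{k, \jj}) \to 0$ as well.

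Finally, I would pass to the limit via the axioms. Stability~\eqref{axiom:stability}, applied with $\TT_\fine = \TT_\coarse = \TT_\elll$, $\UU_\coarse = \TT_\elll$, $v_\coarse = u_\elll^\exact$, and $v_\fine = u_\elll^{k,\jj}$, shows
\begin{equation*}
 \bigl|\eta_\elll(u_\elll^\exact) - \eta_\elll(u_\elll^{k, \jj})\bigr|
 \le \Cstab \, \enorm{u_\elll^\exact - u_\elll^{k, \jj}}
 \xrightarrow{k\to\infty} 0,
\end{equation*}
whence $\eta_\elll(u_\elll^\exact) = 0$. Reliability~\eqref{axiom:reliability} then concludes $\enorm{u^\exact - u_\elll^\exact} \le \Crel \, \eta_\elll(u_\elll^\exact) = 0$, i.e.\ $u^\exact = u_\elll^\exact$. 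No step presents a real obstacle; the only conceptual subtlety is the first one, namely that the outer loop can only stall through the $k$-loop and not through the $j$-loop, which is precisely what Lemma~\ref{lemma:termination-j} rules out.
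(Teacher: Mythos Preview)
Your proposal is correct and follows essentially the same route as the paper's proof: both use Lemma~\ref{lemma:termination-j} together with $\#\QQ=\infty$ to force $\kk[\elll]=\infty$, invoke the contraction~\eqref{eq2:lem:contraction_perturbed} (which then applies for all $k\ge 1$) to obtain $u_\elll^{k,\jj}\to u_\elll^\exact$, use the failed stopping criterion (i.d) to get $\eta_\elll(u_\elll^{k,\jj})\to 0$, and conclude via stability~\eqref{axiom:stability} and reliability~\eqref{axiom:reliability}. The only difference is presentational: the paper combines the limit passage for the estimator into a single displayed inequality, whereas you split it into two steps.
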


\begin{proof}
	Since $\jj[\elll,\kk] < \infty$ by virtue of Lemma~\ref{lemma:termination-j}, it follows for $\underline{\ell}<\infty$ that $\kk[\elll] = \infty$ and hence by the not met stopping criterion in Algorithm~\ref{algorithm}(i.d) that
	\begin{align*}
		\eta_\elll(u_\elll^{k,\jj}) < \lamsym^{-1} \, \enorm{u_\elll^{k,\jj} - u_\elll^{k-1, \jj}}
		\quad \text{for all } k \in \N.
	\end{align*}
	Since the perturbed Zarantonello iteration is convergent (see Lemma~\ref{lem:contraction_perturbed}) with limit $u_\elll^\exact$ (and thus $(u_\elll^{k,\jj})_{k \in \N_0}$ is a Cauchy sequence), we infer that
	\begin{align*}
		\eta_\elll(u_\elll^\exact)
		\eqreff{axiom:stability}\le \eta_\elll(u_\elll^{k,\jj}) + \Cstab \, \enorm{u_\elll^\exact - u_\elll^{k,\jj}}
		\xrightarrow{k \to \infty} 0.
	\end{align*}
	This proves $\eta_\elll(u_\elll^\exact) = 0$, whence with reliability~\eqref{axiom:reliability}, we conclude $u_\elll^\exact = u^\exact$.
\end{proof}

\subsection{A~priori convergence}

For general second-order linear elliptic PDEs, an \textsl{a~priori} convergence result 
is required to ensure that there holds a quasi-Pythagorean estimate; see Lemma~\ref{lemma:quasi-pythagoras} below.

\begin{lemma}[a~priori convergence]\label{prop:plain-convergence}
With $\elll \in \N_0 \cup \{\infty\}$ from~\eqref{eq:def:final-indices}, define the discrete limit space $\XX_\infty \coloneqq {\rm closure}\bigl( \bigcup_{\ell=0}^{\elll} \XX_\ell \bigr)$. Then, there exists $u_\infty^\exact \in \XX_\infty$ such that
\begin{align}\label{eq:weakform:infty}
		b(u_\infty^\exact, v_\infty) = F(v_\infty)
		\quad \text{for all } v_\infty \in \XX_\infty,
\end{align}
and it holds that 
\begin{align}\label{eq:prop:plain-convergence}
 \norm{u_\infty^\exact - u_\ell^\exact} \to 0 \quad\text{as } \ell \to \elll.
\end{align}
In particular, this implies $u_\elll^\exact = u_\infty^\exact$ if $\elll < \infty$.
Moreover, with $\Ccea = L/\alpha$ from~\eqref{eq:intro:cea}, there holds the C\'ea-type estimate
\begin{align}\label{eq:cea:infty}
	\enorm{u_\infty^\exact - u_\ell^\exact}
	\le
	\Ccea \, \min_{v_\ell \in \XX_\ell} \enorm{u_\infty^\exact - v_\ell}
	\quad \text{for all } \ell \in \N_0 \text{ with } \ell \le \elll.
\end{align}
Moreover, reliability~\eqref{axiom:reliability} implies that
\begin{align}\label{axiom:reliability:infty}
 \enorm{u_\infty^\exact - u_\ell^\exact}
 \le (\Ccea + 1) \, \Crel \, \eta_\ell(u_\ell^\exact) 
 \quad \text{for all } \ell \in \N_0 \text{ with } \ell \le \elll.
\end{align}%
%
\end{lemma}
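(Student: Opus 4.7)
The plan is to treat the two cases $\elll<\infty$ and $\elll=\infty$ in a unified way by applying the Lax--Milgram lemma on the closed subspace $\XX_\infty\subseteq\XX$. Since $\XX_\infty$ is a closed subspace of the Hilbert space $\XX$, the bilinear form $b(\cdot,\cdot)$ inherits the ellipticity/continuity constants $\alpha,L$ from~\eqref{eq:lax-milgram} when restricted to $\XX_\infty$. Thus the Lax--Milgram lemma yields a unique $u_\infty^\exact\in\XX_\infty$ satisfying \eqref{eq:weakform:infty}. If $\elll<\infty$, then by definition $\XX_\infty=\XX_\elll$ (since $\XX_\elll$ is already finite dimensional, hence closed), and uniqueness of the Galerkin solution in $\XX_\elll$ gives $u_\infty^\exact=u_\elll^\exact$ with trivial convergence.

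First I would establish the C\'ea-type estimate~\eqref{eq:cea:infty}. Since $\XX_\ell\subseteq\XX_\infty$ for all $\ell\le\elll$, taking test functions $v_\ell\in\XX_\ell$ in \eqref{eq:weakform:infty} and subtracting~\eqref{eq:intro:discrete} yields the Galerkin orthogonality
\begin{equation*}
b(u_\infty^\exact-u_\ell^\exact,v_\ell)=0\qquad\text{for all }v_\ell\in\XX_\ell.
\end{equation*}
For any $w_\ell\in\XX_\ell$, ellipticity and continuity of $b(\cdot,\cdot)$ with respect to $\enorm{\cdot}$ together with this orthogonality give
\begin{equation*}
\alpha\,\enorm{u_\infty^\exact-u_\ell^\exact}^2
\le b(u_\infty^\exact-u_\ell^\exact,u_\infty^\exact-w_\ell)
\le L\,\enorm{u_\infty^\exact-u_\ell^\exact}\,\enorm{u_\infty^\exact-w_\ell},
\end{equation*}
and dividing yields~\eqref{eq:cea:infty} with constant $\Ccea=L/\alpha$.

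Next I would prove the convergence~\eqref{eq:prop:plain-convergence}. By definition, $\bigcup_{\ell=0}^{\elll}\XX_\ell$ is dense in $\XX_\infty$. Hence, for any $\varepsilon>0$ there exist $\ell_\varepsilon\in\N_0$ and $w_{\ell_\varepsilon}\in\XX_{\ell_\varepsilon}$ with $\enorm{u_\infty^\exact-w_{\ell_\varepsilon}}<\varepsilon$. Nestedness $\XX_{\ell_\varepsilon}\subseteq\XX_\ell$ for $\ell_\varepsilon\le\ell\le\elll$ makes $w_{\ell_\varepsilon}$ an admissible competitor in the minimum of~\eqref{eq:cea:infty}, so $\min_{v_\ell\in\XX_\ell}\enorm{u_\infty^\exact-v_\ell}<\varepsilon$ for all such $\ell$. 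Combined with~\eqref{eq:cea:infty}, this gives $\enorm{u_\infty^\exact-u_\ell^\exact}\to 0$ as $\ell\to\elll$, while the equivalence $\enorm{\cdot}\simeq\norm{\cdot}_{\XX}$ transfers this to the $\XX$-norm.

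Finally, the reliability bound~\eqref{axiom:reliability:infty} follows from a triangle inequality argument. Since $u_\ell^\exact\in\XX_\ell\subseteq\XX_\infty$, choosing $v_\infty=u_\ell^\exact$ in the standard C\'ea estimate for $u_\infty^\exact$ viewed as the Galerkin solution on $\XX_\infty\subseteq\XX$ relative to the exact solution $u^\exact$ of \eqref{eq:intro:weakform} yields $\enorm{u^\exact-u_\infty^\exact}\le\Ccea\,\enorm{u^\exact-u_\ell^\exact}$. Applying reliability~\eqref{axiom:reliability} to the right-hand side and using the triangle inequality
\begin{equation*}
\enorm{u_\infty^\exact-u_\ell^\exact}
\le\enorm{u_\infty^\exact-u^\exact}+\enorm{u^\exact-u_\ell^\exact}
\le(\Ccea+1)\,\Crel\,\eta_\ell(u_\ell^\exact)
\end{equation*}
gives~\eqref{axiom:reliability:infty}. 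The only real subtlety is to handle the case $\elll=\infty$ carefully, where the limit $u_\infty^\exact$ must be constructed abstractly via Lax--Milgram rather than as the limit of the discrete Galerkin solutions; once this is in place, all remaining steps are the standard textbook arguments for conforming Galerkin approximation in nested subspaces.
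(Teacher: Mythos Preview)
Your proposal is correct and follows essentially the same approach as the paper: Lax--Milgram on $\XX_\infty$, the C\'ea estimate for the Galerkin approximation $u_\ell^\exact$ of $u_\infty^\exact$, density of $\bigcup_\ell \XX_\ell$ in $\XX_\infty$ for the convergence, and the triangle inequality combined with C\'ea for $\XX_\infty\subset\XX$ plus reliability~\eqref{axiom:reliability} for~\eqref{axiom:reliability:infty}. Your version is slightly more explicit (you spell out the Galerkin orthogonality argument for~\eqref{eq:cea:infty} and the case $\elll<\infty$), but there is no substantive difference.
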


\begin{proof}
Existence and uniqueness of $u_\infty^\exact$ follow from the Lax--Milgram lemma.
		Since $u_\ell^\exact \in \XX_\ell \subseteq \XX_\infty$ is a Galerkin approximation of $u_\infty^\exact$, the C\'ea lemma~\eqref{eq:intro:cea} holds with $u^\exact$ being replaced by $u_\infty^\exact$, and the definition of $\XX_\infty$ proves that
	\begin{align*}
		\enorm{u_\infty^\exact - u_\ell^\exact}
		\le \Ccea \, \min_{v_\ell \in \XX_\ell} \, \enorm{u_\infty^\exact - v_\ell}
		\xrightarrow{\ell \to \elll} 0.
	\end{align*}
Reliability~\eqref{axiom:reliability:infty} follows from the triangle inequality, nestedness of spaces $\XX_\ell \subseteq \XX_\infty$, and the C\'ea lemma~\eqref{eq:intro:cea}, since
\begin{align*}
 \enorm{u_\infty^\exact - u_\ell^\exact}
 \le \enorm{u^\exact - u_\infty^\exact} + \enorm{u^\exact - u_\ell^\exact}
 \eqreff{eq:intro:cea}\le (\Ccea \!+\! 1) \, \enorm{u^\exact - u_\ell^\exact}
 \eqreff{axiom:reliability}\le (\Ccea \!+\! 1) \, \Crel \, \eta_\ell(u_\ell^\exact).
\end{align*}%
	This concludes the proof.
\end{proof}

\subsection{Quasi-Pythagorean estimate}

While symmetric PDEs satisfy a Pythagorean identity in the energy norm (with $\eps = 0$ and $\ell_0=0$ in~\eqref{eq:pythagoras} below), the situation is more involved for nonsymmetric PDEs. The following result generalizes~\cite[Lemma~18]{bhp2017} by considering general $v_\ell \in \XX_\ell$ and by additionally proving the lower bound in~\eqref{eq:pythagoras}. Moreover, it is given here in terms of the \textsl{a~priori} limit $u_\infty^\exact$.
Although the proof follows essentially that of~\cite{bhp2017}, we include it for the sake of completeness.

\begin{lemma}[quasi-Pythagorean estimate]\label{lemma:quasi-pythagoras}
Recall the \textsl{a~priori} limit $u_\infty^\exact \in \XX_\infty$ from Lemma~\ref{prop:plain-convergence} and the compact linear operator $\KK$ from Section~\ref{section:abstract-model-problem}.
Then,
	for all $0 < \eps < 1$, there exists an index $\ell_0 \in \N_0$ with $\ell_0 \le \elll$ such that, for all $\ell_0 \le \ell \le \elll$,
	\begin{align}\label{eq:pythagoras}
		\frac{1}{1 + \eps} \, \enorm{u_\infty^\exact- v_\ell}^2
		\le
		\enorm{u_\infty^\exact - u_\ell^\exact}^2 + \enorm{u_\ell^\exact - v_\ell}^2
		\le
		\frac{1}{1 - \eps} \, \enorm{u_\infty^\exact- v_\ell}^2
		\quad \text{for all } v_\ell \in \XX_\ell.
	\end{align}
\end{lemma}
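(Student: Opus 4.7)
The plan is to expand $\enorm{u_\infty^\exact - v_\ell}^2$ by bilinearity of $a(\cdot,\cdot)$, rewrite the resulting cross term via Galerkin orthogonality with respect to $b(\cdot,\cdot)$ so that it involves only the compact perturbation $\KK$, and then show by a compactness argument that this cross term is asymptotically negligible compared to $\enorm{u_\infty^\exact - u_\ell^\exact}^2 + \enorm{u_\ell^\exact - v_\ell}^2$.

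Concretely, for any $v_\ell \in \XX_\ell$ bilinearity gives
\begin{align*}
\enorm{u_\infty^\exact - v_\ell}^2
= \enorm{u_\infty^\exact - u_\ell^\exact}^2 + \enorm{u_\ell^\exact - v_\ell}^2 + 2 \, a(u_\infty^\exact - u_\ell^\exact, u_\ell^\exact - v_\ell),
\end{align*}
and Galerkin orthogonality $b(u_\infty^\exact - u_\ell^\exact, w_\ell) = 0$ for $w_\ell \in \XX_\ell$ rewrites the cross term as $a(u_\infty^\exact - u_\ell^\exact, u_\ell^\exact - v_\ell) = -\dual{\KK(u_\infty^\exact - u_\ell^\exact)}{u_\ell^\exact - v_\ell}$. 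Given $\eps \in (0,1)$, both inequalities in~\eqref{eq:pythagoras} will follow once I establish the existence of an index $\ell_0 \le \elll$ such that
\begin{align*}
|\dual{\KK(u_\infty^\exact - u_\ell^\exact)}{w_\ell}|
\le \eps \, \enorm{u_\infty^\exact - u_\ell^\exact} \, \enorm{w_\ell}
\quad \text{for all } \ell_0 \le \ell \le \elll, \ w_\ell \in \XX_\ell,
\end{align*}
since Young's inequality $2ab \le a^2 + b^2$ then bounds the absolute value of the cross term by $\eps \, [\enorm{u_\infty^\exact - u_\ell^\exact}^2 + \enorm{u_\ell^\exact - v_\ell}^2]$, from which the two inequalities follow by elementary manipulation.

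The main obstacle is this rate estimate. If $\elll < \infty$, Lemma~\ref{prop:plain-convergence} yields $u_\elll^\exact = u_\infty^\exact$ and $\ell_0 \coloneqq \elll$ trivially works; so suppose $\elll = \infty$. Set $e_\ell \coloneqq u_\infty^\exact - u_\ell^\exact$; indices with $e_\ell = 0$ are trivial, so assume $e_\ell \neq 0$ and write $\hat e_\ell \coloneqq e_\ell/\enorm{e_\ell}$. I claim $\hat e_\ell \rightharpoonup 0$ weakly in $\XX$. By reflexivity and uniqueness of weak subsequential limits, it suffices to show that any weakly convergent subsequence $\hat e_{\ell_n} \rightharpoonup \hat e$ satisfies $\hat e = 0$. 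Since $\hat e_{\ell_n} \in \XX_\infty$ and $\XX_\infty$ is weakly closed, $\hat e \in \XX_\infty$. For any fixed $\ell' \in \N_0$ and $w_{\ell'} \in \XX_{\ell'}$, Galerkin orthogonality yields $b(\hat e_{\ell_n}, w_{\ell'}) = 0$ whenever $\ell_n \ge \ell'$; passing to the weak limit gives $b(\hat e, w_{\ell'}) = 0$, and density of $\bigcup_{\ell' \in \N_0} \XX_{\ell'}$ in $\XX_\infty$ extends this to $b(\hat e, v_\infty) = 0$ for all $v_\infty \in \XX_\infty$. Ellipticity of $b$ then forces $\hat e = 0$.

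Finally, compactness of $\KK \colon \XX \to \XX'$ upgrades $\hat e_\ell \rightharpoonup 0$ to the norm convergence $\norm{\KK\hat e_\ell}_{\XX'} \to 0$. Combined with the norm equivalence $\norm{\cdot}_\XX \simeq \enorm{\cdot}$, there is a constant $C > 0$ with $|\dual{\KK e_\ell}{w_\ell}| \le \norm{\KK e_\ell}_{\XX'} \, \norm{w_\ell}_\XX \le C \, \norm{\KK\hat e_\ell}_{\XX'} \, \enorm{e_\ell} \, \enorm{w_\ell}$, and choosing $\ell_0$ so large that $C \, \norm{\KK\hat e_\ell}_{\XX'} \le \eps$ for all $\ell \ge \ell_0$ delivers the required rate estimate and hence~\eqref{eq:pythagoras}.
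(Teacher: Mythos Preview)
Your proof is correct and follows essentially the same approach as the paper: derive the identity $\enorm{u_\infty^\exact - v_\ell}^2 = \enorm{u_\infty^\exact - u_\ell^\exact}^2 + \enorm{u_\ell^\exact - v_\ell}^2 - 2\dual{\KK(u_\infty^\exact - u_\ell^\exact)}{u_\ell^\exact - v_\ell}$, show that the normalized error $\hat e_\ell$ converges weakly to zero, use compactness of $\KK$ to turn this into norm convergence, and conclude via Young's inequality. Your derivation of the identity via bilinearity and Galerkin orthogonality is somewhat more direct than the paper's expansion through the weak formulations, and you supply a self-contained proof of $\hat e_\ell \rightharpoonup 0$ where the paper cites~\cite[Lemma~17]{bhp2017}, but these are presentational differences rather than a different strategy.
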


\begin{proof}
	The proof is split into four steps.
	
	\medskip
	{\bf Step~1.} If $\elll < \infty$, Lemma~\ref{prop:plain-convergence} proves that $u_\infty^\exact = u_\elll^\exact$. We choose $\ell_0 = \elll$ and obtain that~\eqref{eq:pythagoras} holds with equality and $\eps = 0$, since $\ell = \elll$ and hence $u_\infty^\exact = u_\ell^\exact$. Consequently, \eqref{eq:pythagoras} holds also for all $0 < \varepsilon < 1$. Therefore, it only remains to prove~\eqref{eq:pythagoras} for $\elll = \infty$.
	
	\medskip
	{\bf Step~2.}
	Suppose $\elll = \infty$.
	Let $\ell \in \N_0$ and $v_\ell \in \XX_\ell$.
	The limit formulation~\eqref{eq:weakform:infty} yields
	\begin{equation}\label{eq1:pythagoras}
		\enorm{u_\infty^\exact - v_\ell}^2 = \enorm{u_\infty^\exact}^2 + \enorm{v_\ell}^2 - 2 \, \Re a(u_\infty^\exact, v_\ell)
		\eqreff{eq:weakform:infty}=
		\enorm{u_\infty^\exact}^2 + \enorm{v_\ell}^2 - 2 \, \Re \bigl[ F(v_\ell) - \dual{\KK u_\infty^\exact}{v_\ell} \bigr].
	\end{equation}
	Analogously, from the discrete formulation~\eqref{eq:intro:discrete}
	and the linearity of $\mathcal{K}$, we obtain that
	\begin{align}\label{eq2:pythagoras}
		\begin{split}
			\enorm{u_\ell^\exact - v_\ell}^2
			&= \ 
			\enorm{u_\ell^\exact}^2 + \enorm{v_\ell}^2 - 2 \, \Re a(u_\ell^\exact, v_\ell)
			\\&
			\eqreff*{eq:intro:discrete}= \ 
			\enorm{u_\ell^\exact}^2 + \enorm{v_\ell}^2 - 2 \, \Re \bigl[ F(v_\ell) - \dual{\KK u_\ell^\exact}{v_\ell} \bigr]
			\\&
			= \ 
			\enorm{u_\ell^\exact}^2 + \enorm{v_\ell}^2
			- 2 \Re \bigl[ F(v_\ell) - \dual{\KK u_\infty^\exact}{v_\ell}
			+ \dual{\KK(u_\infty^\exact - u_\ell^\exact)}{v_\ell} \bigr]
		\end{split}
	\end{align}
	as well as
	\begin{align}\label{eq3:pythagoras}
		F(u_\ell^\exact)
		\ \eqreff*{eq:intro:discrete}=\ 
		a(u_\ell^\exact,u_\ell^\exact) + \dual{\KK u_\ell^\exact}{u_\ell^\exact}
		= \enorm{u_\ell^\exact}^2 + \dual{\KK u_\ell^\exact}{u_\ell^\exact}.
	\end{align}
	For $v_\ell = u_\ell^\exact$, we see that
	\begin{align}\label{eq4:pythagoras}
		\begin{split}
			\enorm{u_\infty^\exact - u_\ell^\exact}^2
			\ \ &\eqreff*{eq1:pythagoras}= \ 
			\enorm{u_\infty^\exact}^2 + \enorm{u_\ell^\exact}^2 - 2 \Re \bigl[ F(u_\ell^\exact) - \dual{\KK u_\infty^\exact}{u_\ell^\exact} \bigr]
			\\
			\ &\eqreff*{eq3:pythagoras}= \
			\enorm{u_\infty^\exact}^2 - \enorm{u_\ell^\exact}^2
			+ 2 \Re \dual{\KK(u_\infty^\exact-u_\ell^\exact)}{u_\ell^\exact}.
		\end{split}
	\end{align}
	Summing~\eqref{eq2:pythagoras} and~\eqref{eq4:pythagoras}, we obtain that
	\begin{align}\label{eq5:pythagoras}
		\begin{split}
			&\enorm{u_\infty^\exact - u_\ell^\exact}^2 + \enorm{u_\ell^\exact - v_\ell}^2
			\\& \quad
			=
			\enorm{u_\infty^\exact}^2 + \enorm{v_\ell}^2
			- 2 \Re \bigl[ F(v_\ell) - \dual{\KK u_\infty^\exact}{v_\ell}
			- \dual{\KK(u_\infty^\exact - u_\ell^\exact)}{u_\ell^\exact - v_\ell} \bigr]
			\\& \quad
			\eqreff*{eq1:pythagoras}=
			\enorm{u_\infty^\exact - v_\ell}^2  +  2 \Re \dual{\KK(u_\infty^\exact - u_\ell^\exact)}{u_\ell^\exact - v_\ell}.
		\end{split}
	\end{align}
	
	\medskip
	{\bf Step~3.} We recall from~\cite[Lemma~17]{bhp2017} that the convergence \eqref{eq:prop:plain-convergence} of Lemma~\ref{prop:plain-convergence} yields that
	\begin{align*}
		e_\ell \coloneqq \begin{cases}
			\displaystyle\frac{u_\infty^\exact - u_\ell^\exact}{\enorm{u_\infty^\exact - u_\ell^\exact}} & \text{if } u_\infty^\exact \neq u_\ell^\exact, \\
			0 & \text{otherwise}
		\end{cases}
	\end{align*}
	defines a weakly convergent sequence in $\XX_\infty$ with $e_\ell \rightharpoonup 0$ as $\ell \to \infty$. We recall that compact operators turn weak convergence into norm convergence. With the operator norm $\enorm{\phi}' \coloneqq \sup\limits_{v \in \XX_\infty \backslash \{0\}} \abs{\phi(v)}/\enorm{v}$ of $\phi \in \XX_\infty'$, it thus follows that
	\begin{align*}
		\abs{\dual{\KK ( u_\infty^\exact - u_\ell^\exact )}{u_\ell^\exact - v_\ell}}
		\le \enorm{\KK e_\ell}' \, \enorm{u_\infty^\exact - u_\ell^\exact} \, \enorm{u_\ell^\exact - v_\ell}
		\quad \text{ and } \quad
		\enorm{\KK e_\ell}' \xrightarrow{\ell \to \infty} 0.
	\end{align*}
	Given $\eps > 0$, this provides an index $\ell_0 \in \N$ such that $\enorm{\KK e_\ell}' \le \eps$ for all $\ell \ge \ell_0$ and hence
	\begin{align}\label{eq6:pythagoras}
		\begin{split}
			2 \, \abs{\Re \dual{\KK ( u_\infty^\exact - u_\ell^\exact )}{u_\ell^\exact - v_\ell}}
			&\le 2 \eps \, \enorm{u_\infty^\exact - u_\ell^\exact} \, \enorm{u_\ell^\exact - v_\ell}
			\\&
			\le \eps \bigl[ \enorm{u_\infty^\exact - u_\ell^\exact}^2 + \enorm{u_\ell^\exact - v_\ell}^2 \bigr].
		\end{split}
	\end{align}
	
	\medskip
	{\bf Step~4.} Rearranging the identity~\eqref{eq5:pythagoras} and estimating the compact perturbation via~\eqref{eq6:pythagoras}, we obtain that
	\begin{align*}
		\enorm{u_\infty^\exact - v_\ell}^2
		\ &\eqreff*{eq5:pythagoras}= \
		\enorm{u_\infty^\exact - u_\ell^\exact}^2 + \enorm{u_\ell^\exact - v_\ell}^2
		- 2 \Re \dual{\KK ( u_\infty^\exact - u_\ell^\exact )}{u_\ell^\exact - v_\ell}
		\\
		&\eqreff*{eq6:pythagoras}\le \ 
		(1+\eps) \bigl[ \enorm{u_\infty^\exact - u_\ell^\exact}^2 + \enorm{u_\ell^\exact - v_\ell}^2 \bigr].
	\end{align*}
	This proves the lower estimate in~\eqref{eq:pythagoras}, and the upper estimate is proved analogously.
\end{proof}

\subsection{Auxiliary contraction estimates}
The following lemma extends \cite[Lemma~10]{ghps2021} to the present setting with a quasi-Pythagorean estimate.
\begin{lemma}[combined discretization-symmetrization error]\label{lemma:ghps}
Recall the \textsl{a~priori} limit $u_\infty^\exact \in \XX_\infty$ from Lemma~\ref{prop:plain-convergence}.
Suppose that the estimator satisfies \eqref{axiom:stability}--\eqref{axiom:reliability}.
Let $\lamalg^\exact > 0$ and $0 < \qsymm < 1$ be as in Theorem~\ref{theorem:aisfem:linear-convergence}.
Let $0 < \theta \le 1$ and $0 < \lamsym \le 1$.
Then, there exists $0 < \lamalg' \le \lamalg^\exact$ such that for all $0 < \lamalg \le \lamalg'$ the following holds:
There exists an index $\ell_0 \in \N_0$ with $\ell_0 \le \elll$ and scalars $\nu > 0$ and $0 < \qlin < 1$ such that
	\begin{align}\label{eq1:lemma:ghps}
		\Lambda_\ell^k
		\coloneqq
		\bigl[ \, \enorm{u_\infty^\exact - u_\ell^{k, \jj}}^2 + \nu \, \eta_\ell(u_\ell^{k, \jj})^2 \, \bigr]^{1/2}
		\quad \text{for all } (\ell,k,\jj) \in \QQ
	\end{align}
	satisfies
	\begin{subequations}\label{eq4:lemma:ghps}
		\begin{align}
			\Lambda_\ell^{k+1} &\le \qlin \, \Lambda_\ell^k \hphantom{\Lambda_\ell^{\kk-1}} \quad \text{for all } (\ell, k+1, \jj) \in \QQ \text{ with }\ell \ge \ell_0 \text{ and } k + 1 < \kk[\ell], \label{eq2:lemma:ghps}
			\\
			\Lambda_{\ell+1}^{0} &\le \qlin \, \Lambda_\ell^{\kk-1} \hphantom{\Lambda_\ell^k} \quad \text{for all } (\ell+1, 0, 0) \in \QQ \text{ with } \ell \ge \ell_0. \label{eq3:lemma:ghps}
		\end{align}
	\end{subequations}
\end{lemma}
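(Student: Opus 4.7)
The plan is to establish the two contraction estimates~\eqref{eq2:lemma:ghps} and~\eqref{eq3:lemma:ghps} by combining the perturbed Zarantonello contraction from Lemma~\ref{lem:contraction_perturbed}, the stopping-criterion (non)failures in Algorithm~\ref{algorithm}, the quasi-Pythagorean estimate of Lemma~\ref{lemma:quasi-pythagoras}, the reduction axiom~\eqref{axiom:reduction} with Dörfler marking, and the limit-space reliability~\eqref{axiom:reliability:infty} derived from the a~priori convergence of Lemma~\ref{prop:plain-convergence}. Throughout the proof, I abbreviate $E_\ell \coloneqq \enorm{u_\infty^\exact - u_\ell^\exact}$. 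The four free parameters $\eps$, $\nu$, $\qlin$, and $\lamalg'$ will be tuned jointly at the end; the index $\ell_0$ is then fixed via Lemma~\ref{lemma:quasi-pythagoras} applied to that $\eps$.

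For~\eqref{eq2:lemma:ghps}, fix $(\ell,k+1,\jj)\in\QQ$ with $\ell\ge\ell_0$ and $k+1<\kk[\ell]$. First I would use the failure of the stopping criterion in Algorithm~\ref{algorithm}(i.d) at index $k+1$ together with the triangle inequality and the contraction~\eqref{eq2:lem:contraction_perturbed} to derive $\eta_\ell(u_\ell^{k+1,\jj})\le\lamsym^{-1}(1+\qsymm)\,\enorm{u_\ell^\exact-u_\ell^{k,\jj}}$. Combining the upper quasi-Pythagorean inequality at $v_\ell=u_\ell^{k+1,\jj}$ with~\eqref{eq2:lem:contraction_perturbed} then gives
\[
(\Lambda_\ell^{k+1})^2 \le (1+\eps)\,E_\ell^2 + \bigl[(1+\eps)\qsymm^2 + \nu\lamsym^{-2}(1+\qsymm)^2\bigr]\,\enorm{u_\ell^\exact-u_\ell^{k,\jj}}^2.
\]
The $k$-independent drift $E_\ell^2$ is absorbed using reliability~\eqref{axiom:reliability:infty} combined with stability~\eqref{axiom:stability},
\[
E_\ell^2 \le 2(\Ccea+1)^2\Crel^2\bigl[\eta_\ell(u_\ell^{k,\jj})^2 + \Cstab^2\,\enorm{u_\ell^\exact-u_\ell^{k,\jj}}^2\bigr],
\]
while the lower quasi-Pythagorean inequality supplies $(\Lambda_\ell^k)^2 \ge (1-\eps)\,\enorm{u_\ell^\exact-u_\ell^{k,\jj}}^2 + \nu\,\eta_\ell(u_\ell^{k,\jj})^2$. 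Substituting these into the target $(\Lambda_\ell^{k+1})^2 \le \qlin^2(\Lambda_\ell^k)^2$ reduces the claim to matching the coefficients of $\eta_\ell(u_\ell^{k,\jj})^2$ and $\enorm{u_\ell^\exact-u_\ell^{k,\jj}}^2$. At $\eps=\nu=0$ the resulting system collapses to the strict inequality $\qsymm^2<1$, hence by continuity it is solvable for sufficiently small $\eps,\nu>0$ and some $\qlin\in(\qsymm,1)$.

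For~\eqref{eq3:lemma:ghps}, the identity $u_{\ell+1}^{0,\jj}=u_\ell^{\kk,\jj}$ combined with reduction~\eqref{axiom:reduction} and Dörfler marking (Algorithm~\ref{algorithm}(iii) together with $\MM_\ell\subseteq\TT_\ell\setminus\TT_{\ell+1}$) supplies the decisive estimator buffer
\[
\eta_{\ell+1}(u_\ell^{\kk,\jj})^2 \le \bigl(1-(1-\qred^2)\theta\bigr)\,\eta_\ell(u_\ell^{\kk,\jj})^2.
\]
For the norm part, I would combine upper quasi-Pythagoras, the extended perturbed Zarantonello bound~\eqref{eq2:lem:contraction_perturbed:kk} at $k=\kk$, the met stopping criterion $\enorm{u_\ell^{\kk,\jj}-u_\ell^{\kk-1,\jj}}\le\lamsym\,\eta_\ell(u_\ell^{\kk,\jj})$, and stability~\eqref{axiom:stability} to bound $\enorm{u_\infty^\exact-u_\ell^{\kk,\jj}}^2$ by an expression in $\enorm{u_\infty^\exact-u_\ell^{\kk-1,\jj}}^2$, $\eta_\ell(u_\ell^{\kk-1,\jj})^2$, and a correction of order $(\lamalg\lamsym)\,\eta_\ell(u_\ell^{\kk,\jj})$. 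Absorbing $E_\ell^2$ via~\eqref{axiom:reliability:infty} exactly as in the first step, and finally shrinking $\lamalg'$ so that the $\lamalg$-dependent correction is dominated by the Dörfler buffer, yields $(\Lambda_{\ell+1}^{0})^2 \le \qlin^2(\Lambda_\ell^{\kk-1})^2$ with the same $\qlin$ as in Step~1.

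The main obstacle I foresee is the simultaneous balancing of $\eps$, $\nu$, $\qlin$, and $\lamalg'$. In contrast to the symmetric setting of~\cite{ghps2021}, where Pythagoras is exact and the drift $E_\ell$ is absent, the quasi-Pythagorean splitting here produces a $(1+\eps)$-factor on $E_\ell^2$ that can only be localized on the current mesh via reliability, at the cost of a large constant involving $(\Ccea+1)^2\Crel^2\Cstab^2$. This amplification must be overcome by the strict contraction $\qsymm^2<1$ (for the $k$-loop) or by the Dörfler factor $1-(1-\qred^2)\theta<1$ (for mesh refinement), which is precisely why the limit-space reliability~\eqref{axiom:reliability:infty} obtained from a~priori convergence is indispensable beyond the symmetric analysis.
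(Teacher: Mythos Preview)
Your outline captures the right ingredients (perturbed Zarantonello contraction, quasi-Pythagoras, reliability with respect to $u_\infty^\exact$, estimator reduction under D\"orfler marking), and the overall architecture matches the paper. However, there is a genuine gap in the coefficient-matching step that breaks both~\eqref{eq2:lemma:ghps} and~\eqref{eq3:lemma:ghps} as you have written them.

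The problem is your treatment of the drift term $E_\ell^2 = \enorm{u_\infty^\exact - u_\ell^\exact}^2$. You absorb the \emph{entire} $(1+\eps)E_\ell^2$ via reliability~\eqref{axiom:reliability:infty} plus stability, which produces a contribution $2(1+\eps)(\Ccea+1)^2\Crel^2\,\eta_\ell(u_\ell^{k,\jj})^2$ and an analogous contribution with factor $\Cstab^2$ on $\enorm{u_\ell^\exact - u_\ell^{k,\jj}}^2$. These coefficients are of order one, \emph{not} of order $\eps$ or $\nu$. Consequently, when you pass to the limit $\eps=\nu=0$, the system does \emph{not} collapse to $\qsymm^2<1$: you are left with constraints of the form $2(\Ccea+1)^2\Crel^2 \le \qlin^2\nu$ (forcing $\nu$ large) and $\qsymm^2 + 2(\Ccea+1)^2\Crel^2\Cstab^2 \le \qlin^2$ (which typically fails outright, since $\Crel$, $\Cstab$, $\Ccea$ are uncontrolled). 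Your continuity argument therefore does not close.

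The paper's remedy is to split $(1+\eps)E_\ell^2 = (1-2\eps)E_\ell^2 + 3\eps\,E_\ell^2$ and to apply reliability only to the small part $3\eps\,E_\ell^2$, so that the resulting estimator and norm contributions carry a prefactor $\eps$ and genuinely vanish in the limit. The large part $(1-2\eps)E_\ell^2$ is \emph{kept} and, once the coefficient of $\enorm{u_\ell^\exact - u_\ell^{k,\jj}}^2$ has been pushed below $1-2\eps$, is recombined with it via the \emph{upper} quasi-Pythagorean inequality $E_\ell^2 + \enorm{u_\ell^\exact - u_\ell^{k,\jj}}^2 \le (1-\eps)^{-1}\enorm{u_\infty^\exact - u_\ell^{k,\jj}}^2$, yielding the contraction factor $\qlin^2 = (1-2\eps)/(1-\eps)$. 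In other words, you should not try to bound $(\Lambda_\ell^k)^2$ from below by discarding $E_\ell^2$; instead, bound $(\Lambda_\ell^{k+1})^2$ from above by an expression that still contains $E_\ell^2$ and then use quasi-Pythagoras to reassemble $\enorm{u_\infty^\exact - u_\ell^{k,\jj}}^2 \le (\Lambda_\ell^k)^2$. The same splitting is needed in your argument for~\eqref{eq3:lemma:ghps}.
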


\begin{proof}
	Let $0 < \varepsilon < 1$ as well as $\nu, \omega > 0$ be free parameters to be fixed below. The proof consists of seven steps, where most of the work is necessary to prove~\eqref{eq3:lemma:ghps}.
	
	\medskip
	\textbf{Step 1.} Lemma~\ref{lemma:quasi-pythagoras} provides an index $\ell_0 = \ell_0(\varepsilon)$ such that for all $\ell_0 \le \ell \le \elll$ the quasi-Pythagorean estimate~\eqref{eq:pythagoras} holds true. For $(\ell, k+1, \jj) \in \QQ$ with $\ell_0 \le \ell$, we get that
	\begin{align}\label{eq1:combined-contraction}
		\begin{split}
			(\Lambda_\ell^{k+1})^2 &= \enorm{u_\infty^\exact - u_\ell^{k+1, \jj}}^2 + \nu \, \eta_\ell(u_\ell^{k+1, \jj})^2 \\
			&\eqreff*{eq:pythagoras}{\le} (1+\varepsilon) \, \enorm{u_\infty^\exact - u_\ell^\exact}^2 + (1+\varepsilon) \, \enorm{u_\ell^\exact - u_\ell^{k+1, \jj}}^2 + \nu \, \eta_\ell(u_\ell^{k+1, \jj})^2.
		\end{split}
	\end{align}
	Analogously, for $(\ell+1,0, 0) \in \QQ$ with $\ell \ge \ell_0$, nested iteration $u_{\ell+1}^{0, 0} = u_{\ell+1}^{0, \jj} = u_\ell^{\kk, \jj}$ shows that
	\begin{align}\label{eq4:combined-contraction}
		\begin{split}
			(\Lambda_{\ell+1}^0)^2 &= \enorm{u_\infty^\exact - u_\ell^{\kk, \jj}}^2 + \nu \, \eta_{\ell+1}(u_\ell^{\kk, \jj})^2 \\
			&\eqreff*{eq:pythagoras}{\le} (1+\varepsilon) \, \enorm{u_\infty^\exact - u_\ell^\exact}^2 + (1+\varepsilon) \, \enorm{u_\ell^\exact - u_{\ell}^{\kk, \jj}}^2 + \nu \, \eta_{\ell+1}(u_\ell^{\kk, \jj})^2.
		\end{split}
	\end{align}
	
	\medskip
	\textbf{Step 2.} Define $C_1 \coloneqq 6\, (1+\Ccea)^2 \, \Crel^{2}$ and $C_2 \coloneqq 6 \, (1+\Ccea)^2 \, \Crel^2 \, \Cstab^2$.
	Then, stability~\eqref{axiom:stability} and reliability~\eqref{axiom:reliability:infty} prove that, for all $v_\ell \in \XX_\ell$,
\begin{align}\label{eq2:combined-contraction}
		\begin{split}
			3  \, \enorm{u_\infty^\exact - u_\ell^\exact}^2
			&\eqreff*{axiom:reliability:infty}\le
			3 \, (1+\Ccea)^2 \, \Crel^2 \, \eta_\ell(u_\ell^\exact)^2
			\\
			&\eqreff*{axiom:stability}\le
			6 \, (1+\Ccea)^2 \, \Crel^2 \, \eta_\ell(v_\ell)^2
			+ 6 \, (1+\Ccea)^2 \, \Crel^2 \, \Cstab^2 \, \enorm{u_\ell^\exact - v_\ell}^2
			\\&
			=  C_1 \, \eta_\ell(v_\ell)^2 + \, C_2 \, \enorm{u_\ell^\exact - v_\ell}^2.
		\end{split}
	\end{align}

	\medskip
	\textbf{Step 3.} For $(\ell, k+1, \jj) \in \QQ$ with $\ell \ge \ell_0$ and $k+1 < \kk[\ell]$, 
contraction~\eqref{eq2:lem:contraction_perturbed} of the perturbed Zarantonello iteration proves that
	\begin{align*}
		\enorm{u_\ell^{k+1, \jj} - u_\ell^{k, \jj}} \le \enorm{u_\ell^\exact - u_\ell^{k+1, \jj}} + \enorm{u_\ell^\exact - u_\ell^{k, \jj}} 
		\eqreff{eq2:lem:contraction_perturbed}\le (1+\qsymm) \,\enorm{u_\ell^\exact - u_\ell^{k, \jj}}.
	\end{align*}
	Define $C_3 \coloneqq (1+\qsymm)^2$.
	Using this with the not met stopping criterion in Algorithm~\ref{algorithm}(i.d) for $(\ell, k+1, \jj) \in \QQ$ with $k+1 < \kk[\ell]$ shows that
	\begin{align}\label{eq3:combined-contraction}
		\eta_\ell(u_\ell^{k+1, \jj})^2 \stackrel{\rm(i.d)}< \lamsym^{-2} \enorm{u_\ell^{k+1, \jj}-u_\ell^{k, \jj}}^2
		\le C_3 \lamsym^{-2} \, \enorm{u_\ell^\exact - u_\ell^{k, \jj}}^2.
	\end{align}
	In this case, we are thus led to
	\begin{align*}
		&(\Lambda_\ell^{k+1})^2
		\eqreff*{eq1:combined-contraction}\le
		(1-2 \varepsilon) \, \enorm{u_\infty^\exact - u_\ell^\exact}^2 + 3 \varepsilon \, \enorm{u_\infty^\exact - u_\ell^\exact}^2 + (1+\varepsilon) \, \enorm{u_\ell^\exact - u_\ell^{k+1, \jj}}^2 + \nu \, \eta_\ell(u_\ell^{k+1, \jj})^2 
		\\& \quad
		\eqreff*{eq2:combined-contraction}\le
		(1-2 \varepsilon) \, \enorm{u_\infty^\exact - u_\ell^\exact}^2 + (\nu + \varepsilon \, C_1) \, \eta_\ell(u_\ell^{k+1, \jj})^2 + (1+\varepsilon(1+C_2)) \, \enorm{u_\ell^\exact - u_\ell^{k+1, \jj}}^2 
		\\& \quad
		\eqreff*{eq2:lem:contraction_perturbed}\le
		(1-2 \varepsilon) \, \enorm{u_\infty^\exact - u_\ell^\exact}^2 + (\nu + \varepsilon \, C_1) \,  \eta_\ell(u_\ell^{k+1, \jj})^2 + (1+\varepsilon(1+C_2)) \, \qsymm^2 \, \enorm{u_\ell^\exact - u_\ell^{k, \jj}}^2 
		\\& \quad
		\eqreff*{eq3:combined-contraction}\le
		(1-2 \varepsilon) \, \enorm{u_\infty^\exact - u_\ell^\exact}^2 + \bigl[(\nu + \varepsilon \, C_1) C_3 \lamsym^{-2} + (1+\varepsilon(1+C_2)) \, \qsymm^2\bigr] \, \enorm{u_\ell^\exact - u_\ell^{k, \jj}}^2.
	\end{align*}
	Provided that
\begin{itemize}
\item[{$[I]$}] 
$\qsymm^2 +
		\nu C_3 \lamsym^{-2} + \eps \, [C_1C_3 \lamsym^{-2} + (1+C_2) \, \qsymm^2] \le 1-2\varepsilon,$
\end{itemize}
	the quasi-Pythagorean estimate~\eqref{eq:pythagoras} proves that
	\begin{align*}
		(\Lambda_\ell^{k+1})^2
		\le
		(1-2\varepsilon) \, \bigl[ \enorm{u_\infty^\exact - u_\ell^\exact}^2 + \enorm{u_\ell^\exact - u_\ell^{k, \jj}}^2 \bigr]
		\eqreff{eq:pythagoras}\le
		\frac{1-2\varepsilon}{1-\varepsilon} \, \enorm{u_\infty^\exact - u_\ell^{k, \jj}}^2
		\le
		\frac{1-2\varepsilon}{1-\varepsilon} \, (\Lambda_\ell^k)^2.
	\end{align*}
	Up to the choice of the parameters $\eps$ and $\nu$, this proves~\eqref{eq2:lemma:ghps} for any $0 < \lamalg \le \lamalg^\star$.

	\medskip
	\textbf{Step 4.} 
For $(\ell+1,0, 0) \in \QQ$, 
stability~\eqref{axiom:stability}, reduction~\eqref{axiom:reduction}, and the D\"orfler marking in Algorithm~\ref{algorithm}(iii) yield that
	\begin{align} \label{eq5:combined-contraction}
	\begin{split}
			&\eta_{\ell+1}(u_\ell^{\kk,\jj})^2
			=
			\eta_{\ell+1}(\TT_{\ell+1} \cap \TT_\ell ; u_\ell^{\kk,\jj})^2
			+ \eta_{\ell+1}(\TT_{\ell+1} \backslash \TT_\ell ; u_\ell^{\kk,\jj})^2 
			\\& \quad
			\eqreff*{axiom:stability}=\,
			\eta_\ell(\TT_{\ell+1} \cap \TT_\ell ; u_\ell^{\kk,\jj})^2
			+ \eta_{\ell+1}(\TT_{\ell+1} \backslash \TT_\ell ; u_\ell^{\kk,\jj})^2 
			\\& \quad
			\eqreff*{axiom:reduction}\le \,
			\eta_\ell(\TT_{\ell+1} \cap \TT_\ell ; u_\ell^{\kk,\jj})^2
			+ \qred^{2} \, \eta_\ell(\TT_\ell \backslash \TT_{\ell+1} ; u_\ell^{\kk,\jj})^2 
			\\& \quad
			= \,
			\eta_\ell(u_\ell^{\kk,\jj})^2
			- (1 - \qred^{2} ) \, \eta_\ell(\TT_\ell \backslash \TT_{\ell+1} ; u_\ell^{\kk,\jj})^2 
			\\& \quad
			\le
			\eta_\ell(u_\ell^{\kk,\jj})^2
			- (1 - \qred^{2} ) \, \eta_\ell(\MM_\ell ; u_\ell^{\kk,\jj})^2 
			\le
			[1 - (1-\qred^{2} ) \, \theta] \, \eta_\ell(u_\ell^{\kk,\jj})^2
			\eqqcolon q_\theta \, \eta_\ell(u_\ell^{\kk, \jj})^2,
			\end{split}
			\end{align}
			where \(0 < q_\theta < 1\) by definition.
	
	\medskip
	\textbf{Step 5.} 
Let $(\ell+1, 0, 0) \in \QQ$. With stability~\eqref{axiom:stability}, we infer from~\eqref{eq2:lem:contraction_perturbed:kk} that
\begin{align*}
 \eta_\ell(u_\ell^{\kk, \jj})
 &\eqreff{axiom:stability}\le \eta_\ell(u_\ell^{\kk-1, \jj})
 + \Cstab \, \enorm{u_\ell^{\kk, \jj} - u_\ell^{\kk-1, \jj}}
 \\&
 \eqreff{eq2:lem:contraction_perturbed:kk}\le \eta_\ell(u_\ell^{\kk-1, \jj}) + \Cstab (1+\qsym) \, \enorm{u_\ell^\exact - u_\ell^{\kk-1,\jj}}
 + \frac{2 \, \qalg}{1-\qalg} \, \Cstab \, \lamalg \lamsym \, \eta_{\ell}(u_\ell^{\kk, \jj}).
\end{align*}
For sufficiently small $0 < \lamalg \le \lamalg^\star$ with, e.g., $\frac{2 \, \qalg}{1-\qalg} \, \Cstab \, \lamalg \lamsym \le 1/2$, we thus derive
\begin{align*}
 \eta_\ell(u_\ell^{\kk, \jj})
 \le \frac{1}{1 - \frac{2 \, \qalg}{1-\qalg} \, \Cstab \, \lamalg \lamsym} \, \eta_\ell(u_\ell^{\kk-1, \jj})
 + \frac{\Cstab (1+\qsym)}{1 - \frac{2 \, \qalg}{1-\qalg} \, \Cstab \, \lamalg \lamsym} \, \enorm{u_\ell^\exact - u_\ell^{\kk-1,\jj}}.
\end{align*}%
Define $C_4 \coloneqq \Cstab^2 (1+\qsym)^2$ and $C(\lambda) \coloneqq \bigl[ 1 - \frac{2 \, \qalg}{1-\qalg} \, \Cstab \, \lamalg \lamsym \bigr]^{-2}$  with $\lambda = \lamalg \lamsym$, where we already note that $C(\lambda) \to 1$ is (strictly) monotonically decreasing as $\lambda \to 0$.
Stability~\eqref{axiom:stability} and the Young inequality in the form $(a+b)^2 \le (1+\omega) a^2 + (1+\omega^{-1}) b^2$ for $a,b \in \R$ and $\omega > 0$ show that
	\begin{align}\label{eq6:combined-contraction}
		\begin{split}
			\eta_\ell(u_\ell^{\kk, \jj})^2 
			&\le C(\lambda) \, \big[ (1+\omega) \, \, \eta_\ell(u_\ell^{\kk-1, \jj})^2 + (1+\omega^{-1}) \, C_4 \, \enorm{u_\ell^\exact - u_\ell^{\kk-1, \jj}}^2 \big].
		\end{split}
	\end{align}
	
	\medskip
	\textbf{Step 6.} For $(\ell+1,0, 0) \in \QQ$ with $\ell \ge \ell_0$, we have that
	\begin{align*}
		&(\Lambda_{\ell+1}^0)^2
		\ \, \eqreff*{eq4:combined-contraction}\le \
		(1-2\varepsilon) \, \enorm{u_\infty^\exact - u_\ell^\exact}^2 
		+ (1+\varepsilon) \, \enorm{u_\ell^\exact - u_{\ell}^{\kk, \jj}}^2 
		+ 3 \varepsilon \, \enorm{u_\infty^\exact - u_\ell^\exact}^2 
		+ \nu \,\eta_{\ell+1}(u_\ell^{\kk, \jj})^2 
		\\&
		\eqreff*{eq5:combined-contraction}\le \
		(1-2\varepsilon) \, \enorm{u_\infty^\exact - u_\ell^\exact}^2 
		+ (1+\varepsilon) \, \enorm{u_\ell^\exact - u_{\ell}^{\kk, \jj}}^2 
		+ 3 \varepsilon \, \enorm{u_\infty^\exact - u_\ell^\exact}^2 
		+ \nu q_\theta \,\eta_\ell(u_\ell^{\kk, \jj})^2 
		\\&
		\eqreff*{eq2:combined-contraction}\le \
		(1-2\varepsilon) \, \enorm{u_\infty^\exact - u_\ell^\exact}^2 
		+ (1+\varepsilon) \, \enorm{u_\ell^\exact - u_{\ell}^{\kk, \jj}}^2 
		+ \varepsilon \, C_2 \, \enorm{u_\ell^\exact - u_\ell^{\kk-1, \jj}}^2
		\\& \quad
		+ \varepsilon \, C_1 \, \eta_\ell(u_\ell^{\kk-1, \jj})^2 
		+ \nu q_\theta \,\eta_\ell(u_\ell^{\kk, \jj})^2 
		\\&
		\eqreff*{eq2:lem:contraction_perturbed:kk}\le \
		(1-2\varepsilon) \, \enorm{u_\infty^\exact - u_\ell^\exact}^2 
		+ \bigl[\varepsilon \, C_2 + (1+\eps)^2 \qsym^2\bigr] \, \enorm{u_\ell^\exact - u_\ell^{\kk-1, \jj}}^2
		\\& \quad
		+ \varepsilon \, C_1 \, \eta_\ell(u_\ell^{\kk-1, \jj})^2 
		+ \Bigl[ q_\theta + \nu^{-1} \, (1+\eps) (1+\eps^{-1}) \, \Bigl(\frac{2 \, \qalg}{1-\qalg}\Bigr)^2 \lamalg^2 \lamsym^2 \Bigr] \, \nu \,\eta_\ell(u_\ell^{\kk, \jj})^2.
		\end{align*}
		With $C_{\eps} \coloneqq (1+\eps) (1+\eps^{-1}) \, \bigl(\frac{2 \, \qalg}{1-\qalg}\bigr)^2$, we get
		\begin{align*}
		&(\Lambda_{\ell+1}^0)^2
		\le 
(1-2\varepsilon) \, \enorm{u_\infty^\exact - u_\ell^\exact}^2 
		+ \bigl[\varepsilon \, C_2 + (1+\eps)^2 \qsym^2\bigr] \, \enorm{u_\ell^\exact - u_\ell^{\kk-1, \jj}}^2
		\\& \qquad\qquad
		+ \varepsilon \, C_1 \, \eta_\ell(u_\ell^{\kk-1, \jj})^2 
		+ \bigl[ q_\theta + C_{\eps} \nu^{-1} \, \lamalg^2 \lamsym^2 \bigr] \, \nu \,\eta_\ell(u_\ell^{\kk, \jj})^2.
		\\&
		\eqreff*{eq6:combined-contraction}\le \
(1-2\varepsilon) \, \enorm{u_\infty^\exact - u_\ell^\exact}^2 
		+ \bigl[\varepsilon \, C_2 + (1+\eps)^2 \qsym^2\bigr] \, \enorm{u_\ell^\exact - u_\ell^{\kk-1, \jj}}^2
		\\& \qquad
		+ \varepsilon \, C_1 \, \eta_\ell(u_\ell^{\kk-1, \jj})^2 
		\\& \qquad
		+ \bigl[ q_\theta + C_{\eps} \nu^{-1} \, \lamalg^2 \lamsym^2 \bigr] \, \nu \, \rrevision{C(\lambda)} \, \big[(1+\omega) \, \eta_\ell(u_\ell^{\kk-1, \jj})^2 + (1+\omega^{-1}) \, C_4 \, \enorm{u_\ell^\exact - u_\ell^{\kk-1, \jj}}^2 \big]   
		\\&
		= \ 		
		(1-2\varepsilon) \, \enorm{u_\infty^\exact - u_\ell^\exact}^2 
		\\& \quad
		+ \Bigl(\varepsilon \, C_2 + (1+\eps)^2 \qsym^2 + (1+\omega^{-1}) \, C_4 \, \rrevision{C(\lambda)} \, \bigl[ q_\theta + C_{\eps} \nu^{-1} \, \lamalg^2 \lamsym^2 \bigr] \, \nu \Bigr) \, \enorm{u_\ell^\exact - u_\ell^{\kk-1, \jj}}^2
		\\& \quad
		+ \Bigl((1+\omega) C(\lambda)  \, \bigl[ q_\theta + C_{\eps} \nu^{-1} \, \lamalg^2 \lamsym^2 \bigr] + \varepsilon \, C_1 \nu^{-1} \Bigr) \, \nu \, \eta_\ell(u_\ell^{\kk-1, \jj})^2.
%
	\end{align*}
	Provided that
	\begin{itemize}
	\item[{$[I\!I]$}] $(1+\eps)^2 \qsym^2 + \rrevision{C(\lambda)} \, (1+\omega^{-1}) \, C_4 \, \bigl[ q_\theta + C_{\eps} \nu^{-1} \,  \lamalg^2\lamsym^2 \bigr] \, \nu  + \varepsilon \, C_2 \le 1-2\eps$,
	\item[{$[I\!I\!I]$}] $C(\lambda)  \, \Bigl[ (1+\omega) q_\theta  
		+ (1+\omega) C_{\eps}\nu^{-1} \,  \lamalg^2\lamsym^2 \Bigr]
		+ \varepsilon C_1 \nu^{-1} \le 1 - 2\eps$,
	\end{itemize}	
	the quasi-Pythagorean estimate~\eqref{eq:pythagoras} shows that
	\begin{align*}
		(\Lambda_{\ell+1}^0)^2
		&\le
		(1-2\varepsilon) \bigl[\enorm{u_\infty^\exact- u_\ell^\exact}^2 + \enorm{u_\ell^\exact - u_\ell^{\kk-1, \jj}}^2 + \nu\, \eta_\ell(u_\ell^{\kk-1, \jj})^2\bigr] \\
		\ &\eqreff*{eq:pythagoras}\le \
		\frac{1-2\varepsilon}{1-\varepsilon} \, \enorm{u_\infty^\exact - u_\ell^{\kk-1, \jj}}^2 + (1-2\varepsilon) \nu \, \eta_\ell(u_\ell^{\kk-1, \jj})^2
		\le
		\frac{1-2\varepsilon}{1-\varepsilon} \, (\Lambda_\ell^{\kk-1})^2.
	\end{align*}
	This proves~\eqref{eq3:lemma:ghps} up to the choice of the parameters $\omega, \nu$, and $\varepsilon$ in the following step.
	
	\medskip
	\textbf{Step 7.} A suitable choice of the parameters $\omega, \nu$, and $\varepsilon$ can be obtained as follows:
	\begin{itemize}
		\item first, we choose $\omega$ such that $(1+\omega)q_\theta < 1$;

		\item second, we choose $\nu$ such that $\qsymm^2 + \nu C_3 \lamsym^{-2}< 1$ and $\qsym^2 + (1+\omega^{-1}) C_4 \nu \le 1$;
		
		\item third, we choose $\varepsilon > 0$ sufficiently small such that 
		\begin{itemize}
		\item[$\bullet$] $\qsymm^2 + \nu C_3 \lamsym^{-2} + \eps \, [C_1C_3 \lamsym^{-2} + (1+C_2) \, \qsymm^2] \le 1-2\varepsilon$,
		\item[$\bullet$] $\highlighted{(1+\eps)^2 \qsym^2} + (1+\omega^{-1})C_4q_\theta\nu + \varepsilon \, C_2 < 1-2\eps$,
		\item[$\bullet$] $(1+\omega) q_\theta + \varepsilon C_1 \nu^{-1} < 1 - 2\eps$;
		\end{itemize}
		in particular, constraint $[I]$ from Step~3 is satisfied; 
		\item finally, we note that $C(\lambda) \to 1$ monotonically as $\lambda = \lamalg\lamsym\to0$. Hence, we can choose $0 < \lamalg' \le \min \big\{ \lamalg^\star, \frac{1 - \qalg}{4\qalg\Cstab}\, \lamsym^{-1} \big\}$ sufficiently small such that also the constraints $[I\!I]$ and $[I\!I\!I]$ from Step~6 are satisfied for all $0 < \lamalg \le \lamalg'$.
	\end{itemize}%
	This concludes the proof with $\displaystyle\qlin^2 \coloneqq \frac{1-2\varepsilon}{1-\varepsilon} < 1$
	for any $0 < \lamalg \le \lamalg'$.
\end{proof}

\subsection{Proof of Theorem~\ref{theorem:aisfem:linear-convergence}} \label{subsection:proof-linear-convergence}

The proof is split into five steps. For $(\ell,k,j) \in \QQ$, we consider
	\begin{align}\label{eq:***}
	\begin{split}
		\Delta_\ell^{k,j}
		\, &= \,
		\enorm{u_\infty^\exact - u_\ell^{k,j}} + \enorm{u_\ell^{k,\exact} - u_\ell^{k,j}} + \eta_\ell(u_\ell^{k,j}),
		\\
		\Lambda_\ell^k
		\,&\eqreff*{eq1:lemma:ghps}= \,
		\bigl[ \, \enorm{u_\infty^\exact - u_\ell^{k,\jj}}^2 + \nu \, \eta_\ell(u_\ell^{k,\jj})^2 \, \bigr]^{1/2},
		\end{split}
	\end{align}
	where, compared with~\eqref{eq0:theorem:aisfem:linear-convergence}, the quasi-error $\Delta_\ell^{k,j}$ has been redefined. Later, we shall conclude that indeed $u_\infty^\exact = u^\exact$ so that both definitions coincide. 

\medskip

\noindent
{\bf Step~1.} In the first step, we prove that
\begin{align}\label{eq:step2:proof:convergence}
	\Delta_\ell^{k,j}
	\lesssim \enorm{u_\ell^{k,\exact} - u_\ell^{k,j-1}}
	\quad \text{for all } (\ell,k,j) \in \QQ \text{ with } 1 \le k \le \kk[\ell]
	\text{ and } 1 \le j < \jj[\ell,k].
\end{align}
Together with reliability~\eqref{axiom:reliability:infty} and stability~\eqref{axiom:stability}, the definition of $\Delta_\ell^{k, j}$ shows that
\begin{align*}
	\Delta_\ell^{k, j} \ &\eqreff*{eq:***}=\
	\enorm{u_\infty^\exact - u_\ell^{k, j}} + \enorm{u_\ell^{k, \exact} - u_\ell^{k, j}} + \eta_{\ell}(u_\ell^{k, j}) \\
	&\le \,
	\enorm{u_\infty^\exact - u_\ell^{\exact}} + \enorm{u_\ell^\exact - u_\ell^{k, j}} + \enorm{u_\ell^{k, \exact} - u_\ell^{k, j}} + \eta_{\ell}(u_\ell^{k, j}) \\
	&\eqreff*{axiom:reliability:infty}\le \, (\Ccea+1)\Crel \, \eta_\ell(u_\ell^\exact) + \enorm{u_\ell^\exact - u_\ell^{k, j}} + \enorm{u_\ell^{k, \exact} - u_\ell^{k, j}} + \eta_{\ell}(u_\ell^{k, j}) \\
	&\eqreff*{axiom:stability}\lesssim \, 
	\eta_\ell(u_\ell^{k, j}) + 
	\enorm{u_\ell^\exact - u_\ell^{k, j}} + \enorm{u_\ell^{k, \exact} - u_\ell^{k, j}}.
\end{align*}
The contraction of the (unperturbed) Zarantonello iteration~\eqref{eq:zarantonello:unperturbed} proves that
\begin{align*}
	\enorm{u_\ell^\exact - u_\ell^{k,j}}
	\le
	\enorm{u_\ell^\exact - u_\ell^{k,\star}} + \enorm{u_\ell^{k,\star} - u_\ell^{k,j}}
	&\eqreff*{eq:zarantonello:unperturbed}\le
	\frac{\qsym}{1-\qsym} \, \enorm{u_\ell^{k,\exact} - u_\ell^{k-1, \jj}} + \enorm{u_\ell^{k,\star} - u_\ell^{k,j}}
	\\&\lesssim
	\enorm{u_\ell^{k,\exact} - u_\ell^{k,j}} + \enorm{u_\ell^{k,j} - u_\ell^{k-1,\jj}}.
\end{align*}
Furthermore, the contraction of the algebraic solver~\eqref{eq:algebra:contraction} proves that
\begin{align*}
	\enorm{u_\ell^{k,\exact} - u_\ell^{k,j}}
	\eqreff*{eq:algebra:contraction}\le
	\frac{\qalg}{1-\qalg} \, \enorm{u_\ell^{k,j} - u_\ell^{k,j-1}}.
\end{align*}
Combining the last three estimates with the not met stopping criterion of the algebraic solver in Algorithm~\ref{algorithm}(i.b.II) for $1 \le j < \jj[\ell,k]$, we conclude that
\begin{align*}
	\Delta_\ell^{k, j}
	\lesssim
	\eta_\ell(u_\ell^{k, j}) + \enorm{u_\ell^{k,j} - u_\ell^{k-1,\jj}} + \enorm{u_\ell^{k,j} - u_\ell^{k,j-1}}
	\stackrel{\rm(i.b.II)}\lesssim
	\enorm{u_\ell^{k,j} - u_\ell^{k,j-1}}.
\end{align*}
Finally, the triangle inequality and the contraction~\eqref{eq:algebra:contraction} imply \eqref{eq:step2:proof:convergence}.

\textbf{Step 2.} Next, we show that
\begin{align}\label{eq:step3:proof:convergence}
	\Delta_{\ell}^{k, \jj} \lesssim \Delta_{\ell}^{k,j}
	\quad \text{for all } (\ell,k,j) \in \QQ,
\end{align}
which is trivial for $j= \jj[\ell, k]$. To deal with $j = \jj[\ell, k] - 1$, note that the definition of $ \Delta_{\ell}^{k, j}$ shows that
\begin{align*}
	\Delta_{\ell}^{k, \jj} \,
	\ &\eqreff*{eq:***}=\ 
	\enorm{u_\infty^\exact - u_{\ell}^{k, \jj}} + \enorm{u_{\ell}^{k, \exact} - u_{\ell}^{k, \jj}} + \eta_\ell(u_{\ell}^{k, \jj})
	\\&
	\le \
	\enorm{u_\infty^\exact - u_{\ell}^{k, \jj-1}} + \enorm{u_{\ell}^{k, \exact} - u_{\ell}^{k, \jj-1}} + 2 \, \enorm{u_{\ell}^{k, \jj} - u_{\ell}^{k, \jj-1}} + \eta_\ell(u_{\ell}^{k, \jj}).
\end{align*}
Stability~\eqref{axiom:stability} and the algebraic solver contraction~\eqref{eq:algebra:contraction} lead us to
\begin{align*}
	2 \, \enorm{u_{\ell}^{k, \jj} - u_{\ell}^{k, \jj-1}} + \eta_\ell(u_{\ell}^{k, \jj})
	&\eqreff*{axiom:stability}\le
	(2+\Cstab) \, \enorm{u_{\ell}^{k, \jj} - u_{\ell}^{k, \jj-1}} +  \eta_{\ell}(u_{\ell}^{k, \jj-1})
	\\&
	\eqreff*{eq:algebra:contraction}\le
	(2+\Cstab) (1+\qalg) \, \enorm{u_{\ell}^{k, \exact} - u_{\ell}^{k, \jj-1}} + \eta_{\ell}(u_{\ell}^{k, \jj-1}).
\end{align*}
Combining the last two estimates verifies~\eqref{eq:step3:proof:convergence} for $j = \jj[\ell, k]-1$, i.e.,
\begin{align}\label{eq:step3a:proof:convergence}
	\Delta_{\ell}^{k, \jj}
	\lesssim
	\enorm{u_\infty^\exact - u_{\ell}^{k, \jj-1}} + \enorm{u_{\ell}^{k, \exact} - u_{\ell}^{k, \jj-1}} + \eta_\ell(u_{\ell}^{k, \jj-1})
	\eqreff{eq:***}=
	\Delta_{\ell}^{k, \jj-1}.
\end{align}
We prove the remaining case $j <  \jj[\ell,k]-1$ by~\eqref{eq:step2:proof:convergence} from Step~1 and the algebraic solver contraction~\eqref{eq:algebra:contraction}, i.e.,
\begin{align*}
	\Delta_{\ell}^{k, \jj}
	\eqreff{eq:step3a:proof:convergence}\lesssim \Delta_{\ell}^{k, \jj-1}
	\eqreff{eq:step2:proof:convergence}\lesssim
	\enorm{u_{\ell}^{k, \exact} - u_{\ell}^{k, \jj-2}}
	\eqreff{eq:algebra:contraction}\le
	\qalg^{(\jj[\ell,k]-2)-j} \, \enorm{u_{\ell}^{k, \exact} - u_{\ell}^{k, j}}
	\le
	\Delta_{\ell}^{k, j}.
\end{align*}
This concludes the proof of~\eqref{eq:step3:proof:convergence}.

{\bf Step~3.} In this step, we prove that
\begin{align}\label{eq:step1:proof:convergence}
	\Lambda_\ell^0 \simeq \Delta_\ell^{0,0} = \Delta_\ell^{0,\jj}
	\,\,\, \text{and} \,\,\,
	\Lambda_\ell^k
	\lesssim
	\Delta_\ell^{k, \jj} \,
	\eqreff*{eq:step3:proof:convergence}\lesssim \,
	\Delta_\ell^{k,0}
	\lesssim
	\Lambda_\ell^{k-1}
	\text{ for all } (\ell,k,\jj) \in \QQ \text{ with } k \ge 1.
\end{align}
Together with $u_\ell^{0,\star} = u_\ell^{0,\jj} = u_\ell^{0,0}$, the definition of $\Lambda_\ell^0$ and $\Delta_\ell^{0,0}$ proves that
$\Lambda_\ell^0 \simeq \Delta_\ell^{0,0} = \Delta_\ell^{0, \jj}$ as well as $\Lambda_\ell^k \lesssim \Delta_\ell^{k,\jj}$ for all $(\ell,k,\jj) \in \QQ$, where the hidden constants depend only on $\nu$. Together with~\eqref{eq:step3:proof:convergence} from Step~2, it thus only remains to prove $\Delta_\ell^{k,0} \lesssim \Lambda_\ell^{k-1}$ for $k \ge 1$. 

To this end, let $(\ell,k,\jj) \in \QQ$ with $k \ge 1$.
From contraction~\eqref{eq:zarantonello:unperturbed} of the unperturbed Zarantonello symmetrization and nested iteration $u_\ell^{k,0} = u_\ell^{k-1,\jj}$, we get that
\begin{align*}
	\Delta_\ell^{k,0}
	&=
	\enorm{u_\infty^\exact - u_\ell^{k-1,\jj}} + \enorm{u_\ell^{k,\star} - u_\ell^{k-1,\jj}} + \eta_\ell(u_\ell^{k-1,\jj})
	\\&
	\eqreff*{eq:zarantonello:unperturbed}\le
	\enorm{u_\infty^\exact - u_\ell^{k-1,\jj}} + (1+\qsym) \, \enorm{u_\ell^\star - u_\ell^{k-1,\jj}} + \eta_\ell(u_\ell^{k-1,\jj}).
\end{align*}
The C\'ea lemma~\eqref{eq:cea:infty} proves that
\begin{align*}
	\enorm{u_\ell^\star - u_\ell^{k-1,\jj}}
	\le
	\enorm{u_\infty^\exact - u_\ell^\star} + \enorm{u_\infty^\exact - u_\ell^{k-1,\jj}}
	\eqreff{eq:cea:infty}\lesssim
	\enorm{u_\infty^\exact - u_\ell^{k-1,\jj}}.
\end{align*}
Combining the last two estimates, we arrive at
\begin{align*}
	\Delta_\ell^{k,0}
	\lesssim
	\enorm{u_\infty^\exact - u_\ell^{k-1,\jj}} + \eta_\ell(u_\ell^{k-1,\jj})
	\simeq \Lambda_\ell^{k-1}.
\end{align*}
This concludes the proof of~\eqref{eq:step1:proof:convergence}.

{\bf Step~4.} In this step, we prove that
\begin{align}\label{eq:step4:proof:convergence}
	\sum_{j' = j}^{\jj[\ell,k]} \Delta_\ell^{k,j'}
	\lesssim
	\Delta_\ell^{k,\jj} + \Delta_\ell^{k,j}
	\quad \text{for all } (\ell,k,j) \in \QQ.
\end{align}
According to the right-hand side of~\eqref{eq:step4:proof:convergence}, it remains to consider the sum for $j' = j+1, \dots,\jj[\ell,k] - 1$. With~\eqref{eq:step2:proof:convergence} and contraction~\eqref{eq:algebra:contraction} of the algebraic solver, we get that
\begin{align*}
	\sum_{j' = j+1}^{\jj[\ell,k]-1} \Delta_\ell^{k,j'}
	\, \eqreff{eq:step2:proof:convergence}\lesssim \,\,
	\sum_{j' = j+1}^{\jj[\ell,k]-1} \enorm{u_\ell^{k,\exact} - u_\ell^{k,j'-1}}
	\eqreff{eq:algebra:contraction}\le
	\enorm{u_\ell^{k,\exact} - u_\ell^{k,j}} \sum_{j' = j}^{\jj[\ell,k]-2} \qalg^{j'-j}.
\end{align*}
With the geometric series and $\enorm{u_\ell^{k,\exact} - u_\ell^{k,j}} \le \Delta_\ell^{k,j}$, this concludes the proof of~\eqref{eq:step4:proof:convergence}.

{\bf Step~5.}
For $(\ell,k,j) \in \QQ$ with $\ell \ge \ell_0$, the preceding steps show that
\begin{align*}
	&\sum_{\substack{(\ell',k',j') \in \QQ \\ (\ell',k',j') > (\ell,k,j)}} \!\! \Delta_{\ell'}^{k',j'}
	=
	\sum_{j' = j+1}^{\jj[\ell,k]} \Delta_\ell^{k,j'}
	+
	\!\! \!\sum_{\substack{(\ell',k',0) \in \QQ \\ (\ell',k',0) > (\ell,k,0)}}
	\! \sum_{j' = 0}^{\jj[\ell',k']} \Delta_{\ell'}^{k',j'}
	\\& \qquad \quad
	\eqreff*{eq:step4:proof:convergence}\lesssim
	\bigl[ \Delta_\ell^{k,\jj} + \Delta_\ell^{k,j} \bigr]
	+
	\! \! \! \! \sum_{\substack{(\ell',k',0) \in \QQ \\ (\ell',k',0) > (\ell,k,0)}} \! \!\bigl[ \Delta_{\ell'}^{k',\jj} + \Delta_{\ell'}^{k',0} \bigr]
	\eqreff*{eq:step3:proof:convergence}\lesssim
	\Delta_\ell^{k,j}
	+
	\! \!  \! \! \sum_{\substack{(\ell',k',0) \in \QQ \\ (\ell',k',0) > (\ell,k,0)}}  \! \! \Delta_{\ell'}^{k',0}.
\end{align*}
With linear convergence~\eqref{eq4:lemma:ghps} of $\Lambda_\ell^k$ from Lemma~\ref{lemma:ghps} and the geometric series, we thus see that
\begin{align*}
	\sum_{\substack{(\ell',k',0) \in \QQ \\ (\ell',k',0) > (\ell,k,0)}} \Delta_{\ell'}^{k',0}
	&=
	\sum_{k' = k+1}^{\kk[\ell]} \Delta_\ell^{k',0}
	+ \sum_{\ell' = \ell+1}^{\elll} \sum_{k'=0}^{\kk[\ell']} \Delta_{\ell'}^{k',0}
	\eqreff*{eq:step1:proof:convergence}\lesssim \,
	\sum_{k' = k}^{\kk[\ell]-1} \Lambda_\ell^{k'}
	+ \sum_{\ell' = \ell+1}^{\elll} \sum_{k'=0}^{\kk[\ell']-1} \Lambda_{\ell'}^{k'}
	\\&
	\eqreff*{eq4:lemma:ghps}\lesssim \, \Lambda_\ell^k + \Lambda_{\ell+1}^0
\le 2\Lambda_\ell^k
	\eqreff{eq:step1:proof:convergence}\lesssim \Delta_\ell^{k,\jj}
	\eqreff{eq:step3:proof:convergence}\lesssim \Delta_\ell^{k,j}.
\end{align*}
Altogether, this proves that
\begin{align*}
	&\sum_{\substack{(\ell',k',j') \in \QQ \\ (\ell',k',j') > (\ell,k,j)}} \Delta_{\ell'}^{k',j'}
	\le C_{\rm sum} \Delta_\ell^{k,j}
	\quad \text{for all } (\ell,k,j) \in \QQ.
\end{align*}
According to basic calculus (see, e.g.,~\cite[Lemma 4.9]{axioms}), this is equivalent to linear convergence with respect to the lexicographic order on $\QQ$, i.e., for all $(\ell,k,j), (\ell',k',j') \in \QQ$ with $|\ell',k',j'| \le |\ell,k,j|$ and $\ell' \ge \ell_0$, it holds that
\begin{align*}
 \Delta_\ell^{k,j}
 \le
 \Clin \, \qlin^{|\ell,k,j| - |\ell',k',j'|} \, \Delta_{\ell'}^{k',j'},
\end{align*}
where the constants $\Clin > 0$ and $0 < \qlin < 1$ depend only on $C_{\rm sum}$. This also yields that
\begin{align*}
 &\enorm{u^\exact - u_\ell^\exact}
 \eqreff{axiom:reliability}\lesssim 
 \eta_\ell(u_\ell^\exact) 
 \eqreff{axiom:stability}\lesssim 
 \eta_\ell(u_\ell^{k,j}) + \enorm{u_\ell^\exact - u_\ell^{k,j}}
 \le 
 \eta_\ell(u_\ell^{k,j}) + \enorm{u_\infty^\exact - u_\ell^{k,j}} + \enorm{u_\infty^\exact - u_\ell^\exact}
 \\& \quad
 \eqreff{eq:cea:infty}\lesssim 
 \eta_\ell(u_\ell^{k,j}) + \enorm{u_\infty^\exact - u_\ell^{k,j}}
 \to 0
 \quad \text{as } |\ell,k,j| \to \infty
\end{align*}
and hence $u_\infty^\exact = u^\exact$. In particular, the definitions of $\Delta_\ell^{k,j}$ from~\eqref{eq0:theorem:aisfem:linear-convergence} and~\eqref{eq:***} coincide. Overall, we thus conclude the proof of linear convergence~\eqref{eq:theorem:aisfem:linear-convergence}.
\qed


\subsection{Proof of Theorem~\ref{theorem:aisfem:complexity}}\label{subsection:proof:complexity}
The proof of Theorem~\ref{theorem:aisfem:complexity} requires the following auxiliary lemma stating that the error estimator \(\eta_\ell(u_\ell^{\kk, \jj})\) of the inexact but available final iterate of Algorithm~\ref{algorithm} is equivalent to the error estimator \(\eta_\ell(u_\ell^{\exact})\) of the (unknown) exact solution \(u_\ell^\exact\). While the statement is similar to~\cite[Lemma~7]{hpsv2021}, the present proof provides a minor clarification of the involved constant.

\begin{lemma}\label{lem:estimator-equivalence}
Recall $\Calg > 0$ from~\eqref{eq:def:Calg} and $\lamalg^\star > 0$ from Theorem~\ref{theorem:aisfem:linear-convergence}.
		Then, for all \(0 < \theta \le 1\), \(0 < \lamalg \le \lamalg^\star\), \(0 < \lamsym < \lamsym^\exact = \min\{1, \Cstab^{-1} \Calg^{-1}\}\), and all \((\ell, \kk, \jj) \in \QQ\), it holds that
		\begin{align}\label{eq:step3-estimator-equivalence}
		\enorm{u_\ell^\exact - u_\ell^{\kk, \jj}} 
		 \le \Calg \, \lamsym \,  \eta_{\ell}(u_\ell^{\kk, \jj}).
		\end{align}
Moreover, there holds  equivalence
		\begin{equation}\label{eq:estimator-equivalence}
			\bigl[1 - \lambda_{\rm sym} / \lamsym^\exact\bigr] \eta_{\ell}(u_\ell^{\kk, \jj}) 
			\le \eta_{\ell}(u_\ell^\exact) \le \bigl[1 + \lambda_{\rm sym} / \lamsym^\exact\bigr] \eta_{\ell}(u_\ell^{\kk, \jj}).
		\end{equation}
\end{lemma}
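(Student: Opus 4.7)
The plan is to first establish the key quantitative bound \eqref{eq:step3-estimator-equivalence} by combining the refined contraction estimate~\eqref{eq2:lem:contraction_perturbed:kk} from Lemma~\ref{lem:contraction_perturbed} with the met stopping criterion of the Zarantonello loop in Algorithm~\ref{algorithm}(i.d), and then to derive the estimator equivalence~\eqref{eq:estimator-equivalence} from \eqref{eq:step3-estimator-equivalence} by a single application of stability~\eqref{axiom:stability}. The main point is to absorb an occurrence of $\enorm{u_\ell^\exact - u_\ell^{\kk,\jj}}$ on the right-hand side by the factor $\qsym < 1$.

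For the first bound, I start from~\eqref{eq2:lem:contraction_perturbed:kk} to write
\begin{align*}
  \enorm{u_\ell^\exact - u_\ell^{\kk, \jj}}
  \le
  \qsym \, \enorm{u_\ell^\exact - u_\ell^{\kk-1, \jj}} + \frac{2 \, \qalg}{1-\qalg} \, \lamalg \, \lamsym \, \eta_{\ell}(u_\ell^{\kk, \jj}).
\end{align*}
I then use the triangle inequality and the met stopping criterion in Algorithm~\ref{algorithm}(i.d), namely $\enorm{u_\ell^{\kk,\jj} - u_\ell^{\kk-1,\jj}} \le \lamsym \, \eta_\ell(u_\ell^{\kk,\jj})$, to estimate
\begin{align*}
  \enorm{u_\ell^\exact - u_\ell^{\kk-1, \jj}}
  \le
  \enorm{u_\ell^\exact - u_\ell^{\kk, \jj}} + \lamsym \, \eta_\ell(u_\ell^{\kk, \jj}).
\end{align*}
Substituting and using $\lamalg \le \lamalg^\exact$, I rearrange to obtain
\begin{align*}
  (1-\qsym)\,\enorm{u_\ell^\exact - u_\ell^{\kk, \jj}}
  \le
  \Bigl( \qsym + \frac{2\,\qalg}{1-\qalg}\,\lamalg^\exact \Bigr)\,\lamsym\,\eta_\ell(u_\ell^{\kk, \jj}),
\end{align*}
which is exactly~\eqref{eq:step3-estimator-equivalence} with $\Calg$ from~\eqref{eq:def:Calg}.

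For the equivalence~\eqref{eq:estimator-equivalence}, I apply stability~\eqref{axiom:stability} on the common triangulation $\TT_\ell$ with $\UU_\coarse = \TT_\ell$ to obtain
\begin{align*}
  \bigl|\eta_\ell(u_\ell^\exact) - \eta_\ell(u_\ell^{\kk,\jj})\bigr|
  \le
  \Cstab\,\enorm{u_\ell^\exact - u_\ell^{\kk,\jj}}
  \le
  \Cstab\,\Calg\,\lamsym\,\eta_\ell(u_\ell^{\kk,\jj}).
\end{align*}
Since $\lamsym^\exact = \min\{1, \Cstab^{-1}\Calg^{-1}\}$, it follows that $\Cstab\,\Calg \le 1/\lamsym^\exact$, hence $\Cstab\,\Calg\,\lamsym \le \lamsym/\lamsym^\exact$, and the two-sided bound~\eqref{eq:estimator-equivalence} follows immediately by the reverse triangle inequality. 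There is no genuine obstacle here; the only delicate point is correctly tracking the constants so that $\Calg$ from~\eqref{eq:def:Calg} and $\lamsym^\exact$ match, which is ensured by using $\lamalg \le \lamalg^\exact$ in the $2\qalg/(1-\qalg)$-term and by the definition of $\lamsym^\exact$ in terms of $\Calg$.
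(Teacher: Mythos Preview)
Your proof is correct and follows essentially the same approach as the paper: both start from the refined contraction estimate~\eqref{eq2:lem:contraction_perturbed:kk}, combine it with the triangle inequality and the met stopping criterion in Algorithm~\ref{algorithm}(i.d) to absorb the $\qsym\,\enorm{u_\ell^\exact - u_\ell^{\kk,\jj}}$ term, and then apply stability~\eqref{axiom:stability} together with the definition of $\lamsym^\exact$ to obtain the two-sided estimator equivalence. The constant tracking matches the paper's argument exactly.
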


\begin{proof}
	The proof consists of two steps.
	
	\medskip
	\textbf{Step~1.} 
	Recall from~\eqref{eq2:lem:contraction_perturbed:kk} that
	\begin{align*}
 \enorm{u_\ell^\exact - u_\ell^{\kk,\jj}}
 \eqreff{eq2:lem:contraction_perturbed:kk}\le
 \qsym \, \enorm{u_\ell^\exact - u_\ell^{\kk-1,\jj}} + \frac{2 \, \qalg}{1-\qalg} \, \lamalg \, \lamsym \, \eta_{\ell}(u_\ell^{\kk, \jj}).
\end{align*}
The stopping criterion in Algorithm~\ref{algorithm}(i.d) proves that
\begin{align*}
 \enorm{u_\ell^\exact - u_\ell^{\kk-1,\jj}}
 \le \enorm{u_\ell^\exact - u_\ell^{\kk,\jj}} + \enorm{u_\ell^{\kk,\jj} - u_\ell^{\kk-1,\jj}}
 \stackrel{\rm(i.d)}\le  \enorm{u_\ell^\exact - u_\ell^{\kk,\jj}} + \lamsym \eta_\ell(u_\ell^{\kk,\jj}).
\end{align*}
Combining these estimates with $0 < \lamalg \le \lamalg^\star$, we prove~\eqref{eq:step3-estimator-equivalence}, since 
\begin{align*}
 \enorm{u_\ell^\exact - u_\ell^{\kk,\jj}} 
 \le
 \frac{1}{1-\qsym} \Big( \frac{2 \, \qalg}{1-\qalg} \, \lamalg +  \qsym  \Big) \, \lamsym \, \eta_{\ell}(u_\ell^{\kk, \jj})
 \le \Calg \lamsym \, \eta_{\ell}(u_\ell^{\kk, \jj}).
\end{align*}

	\medskip
	\textbf{Step~2.} With the definition \(\lambda_{\rm sym}^\exact = \min\{1, \Cstab^{-1} \Calg^{-1}\}\), stability~\eqref{axiom:stability} and~\eqref{eq:step3-estimator-equivalence} show
	\begin{align*}
		\eta_\ell(u_\ell^\exact) 
		\eqreff*{axiom:stability}\le \eta_{\ell}(u_\ell^{\kk, \jj}) + \Cstab \, \enorm{u_\ell^{\exact} - u_\ell^{\kk, \jj}} 
		&\eqreff*{eq:step3-estimator-equivalence}\le \bigl[1 + \Cstab \, \Calg \lamsym \bigr] \, \eta_\ell(u_\ell^{\kk, \jj})
		\\&
 \le \bigl[1 + \lamsym / \lamsym^\exact \bigr] \, \eta_\ell(u_\ell^{\kk, \jj}).
\end{align*}
%
If \(0 < \lambda_{\rm sym} <  \lambda_{\rm sym}^\exact\), the analogous argument also proves the converse inequality
	\begin{equation}
		\bigl[1 - \lamsym / \lamsym^\exact\bigr] \, \eta_\ell(u_\ell^{\kk, \jj})  \le \eta_\ell(u_\ell^\exact).
	\end{equation}
	This concludes the proof.
\end{proof}

\begin{proof}[Proof of Theorem~\ref{theorem:aisfem:complexity}]
	It is sufficient to show that
\begin{subequations}\label{eq:equiv:complexity}
	\begin{align}
		\label{eq:equiv:lowerbound}\norm{u^\exact}_{\A_s(\TT_{0})}
		&\lesssim \sup_{\substack{(\ell,k,j) \in \QQ}} (\#\TT_\ell)^s \, \Delta_\ell^{k,j}, \\
		\label{eq:equiv:upperbound}\sup_{\substack{(\ell,k,j) \in \QQ \\ \ell \ge\ell_0}} (\#\TT_\ell)^s \, \Delta_\ell^{k,j} &\lesssim  \max\{\norm{u^\exact}_{\A_s(\TT_{\ell_0})}, \Delta_{\ell_0}^{0,0}\}.
	\end{align}
\end{subequations}
Then,~\eqref{eqx:theorem:aisfem:complexity} follows from~\eqref{eq:equiv:upperbound} and Corollary~\ref{corollary:aisfem:linear-convergence}.
We split the proof into six steps.

\medskip
\textbf{Step 1.} We first show~\eqref{eq:equiv:lowerbound} for the case $\elll = \infty$. Algorithm~\ref{algorithm} ensures that $\# \TT_{\ell} \to \infty$ as $\ell \to \infty$. We recall that in NVB refinement an element is split into at least two but at most $\Cchild$ child elements. In particular, for all $\ell \ge 0$, we have that
\begin{align}\label{eq:Cchild}
	\# \TT_{\ell +1} \le \Cchild \, \# \TT_\ell.
\end{align}
For any given $N \in \N$, we can argue similarly as in the proof
of~\cite[Proposition~4.15]{axioms}. Choose the maximal index $\ell' \in \N_0$ such that $\# \TT_{\ell'} - \# \TT_{0}  \le N$. 
The maximality of $\ell'$ leads us to
\begin{align}\label{eq:estimate-N}
	N+1 < \# \TT_{\ell'+1} - \# \TT_{0} + 1
	\le \# \TT_{\ell'+1}
	\eqreff{eq:Cchild}\le
	\Cchild \,  \# \TT_{\ell'}.
\end{align}
Since $\TT_{\ell'} \in \T_N(\TT_{0})$, we have that
\begin{align}\label{eq:minopt_est}
	\min_{\TT_{\rm opt} \in \T_N (\TT_{0})} \bigl[ \enorm{u^\star - u^\star_{\rm opt}} + \eta_{\rm opt} (u^\star_{\rm opt}) \bigr]
	\le   \enorm{u^\star - u^\star_{\ell'}} + \eta_{\ell'} (u^\star_{\ell'}),
\end{align}
and stability~\eqref{axiom:stability} and the C\'ea lemma~\eqref{eq:intro:cea} show, for $(\ell',k',j') \in \QQ$, that
\begin{align}
	\enorm{u^\star - u^\star_{\ell'}} + \eta_{\ell'} (u^\star_{\ell'})
	&\eqreff*{axiom:stability}\le  \enorm{u^\star - u^\star_{\ell'}} +
	\eta_{\ell'} (u^{k',j'}_{\ell'}) + \Cstab \, \enorm{u^\star_{\ell'}-u^{k',j'}_{\ell'}} \nonumber\\
	&\le (1+\Cstab ) \enorm{u^\star - u^\star_{\ell'}} +
	\eta_{\ell'} (u^{k',j'}_{\ell'}) + \Cstab \, \enorm{u^\star-u^{k',j'}_{\ell'}} \nonumber \\
	&\eqreff*{eq:intro:cea}\le
	\bigl(\Ccea \, (1+\Cstab ) + \Cstab \bigr) \,\enorm{u^\star - u^{k',j'}_{\ell'}} +
	\eta_{\ell'} (u^{k',j'}_{\ell'})\nonumber \\
	&\le \bigl(\Ccea \, (1+\Cstab ) + \Cstab \bigr) \, \Delta_{\ell'}^{k',j'}.\label{eq:opt_to_quasierr}
\end{align}
A combination of the previous estimates leads us to
\begin{align*}
	\bigl( N+1 \bigr)^s &\min_{\TT_{\rm opt} \in \T_N (\TT_{0}) } \bigl[ \enorm{u^\star - u^\star_{\rm opt}} + \eta_{\rm opt} (u^\star_{\rm opt}) \bigr]
	\ \eqreff*{eq:minopt_est}\le \ 
	\bigl( N+1 \bigr)^s   \bigl[\enorm{u^\star - u^\star_{\ell'}} + \eta_{\ell'} (u^\star_{\ell'}) \bigr]  \\
	&\eqreff*{eq:estimate-N} \le \ 
	\Cchild^s \, \bigl(\# \TT_{\ell'} \bigr)^s  \bigl[ \enorm{u^\star - u^\star_{\ell'}} + \eta_{\ell'} (u^\star_{\ell'}) \bigr]
	\eqreff{eq:opt_to_quasierr}\lesssim \bigl(\# \TT_{\ell'} \bigr)^s  \Delta_{\ell'}^{k',j'}
	\le \sup_{\substack{(\ell,k,j) \in \QQ}} \bigl( \# \TT_\ell
	\bigr)^s \, \Delta_\ell^{k,j}.
\end{align*}
Finally, taking the supremum over all $N$ yields the sought result
\begin{align*}
	\norm{u^\exact}_{\A_s(\TT_{0})}
	\lesssim \sup_{\substack{(\ell,k,j) \in \QQ }} \bigl( \# \TT_\ell
	\bigr)^s \, \Delta_\ell^{k,j}.
\end{align*}

\medskip
\textbf{Step 2.} We proceed to show~\eqref{eq:equiv:lowerbound} for the case $\elll < \infty$. Recall from Lemma~\ref{lemma:new:ghps} that $\eta_{\elll}(u_\elll^\exact) = 0$ and $u_\elll^\exact = u^\exact$. Without loss of generality, we may assume $\elll > 0$, since otherwise $\norm{u^\exact}_{\A_s(\TT_{0})} = 0$. Combined with reliability~\eqref{axiom:reliability}, this yields that
\begin{align}\label{eq:estimate-approx-class}
	\begin{split}
		\norm{u^\exact}_{\A_s(\TT_{0})}
		&\eqreff*{eq:def_approx_class}=
		\, \sup_{N \in \N_0} \Bigl( \bigl( N+1 \bigr)^s \min_{\TT_{\rm opt} \in \T_N (\TT_{0}) } \bigl[ \enorm{u^\star - u^\star_{\rm opt}} + \eta_{\rm opt} (u^\star_{\rm opt}) \bigr] \Bigr)\\
		&\eqreff*{axiom:reliability}\le \, (1+\Crel) \, \sup \limits_{0 \le N < \# \TT_{\elll} - \# \TT_{0}} \Bigl((N+1)^s \min_{\TT_{\rm opt} \in \T_N (\TT_{0}) }   \eta_{\rm opt}(u_{\rm opt}^\exact)  \Bigr).
	\end{split}
\end{align}
We argue as in Step~1 above: Let $0 \le N < \# \TT_{\elll} - \# \TT_0$. Choose the maximal index $0 \le \ell' < \elll$ with $\# \TT_{\ell'} - \# \TT_0 \le N$. Arguing along the lines of \eqref{eq:estimate-N}--\eqref{eq:opt_to_quasierr}, we see that 
\begin{align*}
	\sup \limits_{0 \le N < \# \TT_{\elll} - \# \TT_{0}} \Bigl(\bigl(N+1 \bigr)^s \, \min \limits_{\TT_{\rm opt} \in \T_N(\TT_{0})} \eta_{\rm opt}(u^\exact_{\rm opt})\Bigr) \lesssim  \sup_{\substack{(\ell,k,j) \in \QQ }} \bigl( \# \TT_\ell
	\bigr)^s \, \Delta_\ell^{k,j}.
\end{align*}
Combining this with \eqref{eq:estimate-approx-class}, we conclude the lower bound \eqref{eq:equiv:lowerbound} also in this case.

\medskip
\textbf{Step 3.} We prove~\eqref{eq:equiv:upperbound} for $ \norm{u^\exact}_{\A_s(\TT_{\ell_0})} < \infty$, since the result becomes trivial if $\norm{u^\exact}_{\A_s(\TT_{\ell_0})} = \infty$. First, we show that for all
$\ell' \ge \ell_0$ with $(\ell'+1, 0, 0) \in \QQ$, there exists $ \RR_{\ell'}  \subseteq \TT_{\ell'}$ such that
\begin{align}\label{eq2:marking_est}
	\# \RR_{\ell'}  \lesssim \norm{u^\exact}_{\A_s(\TT_{\ell_0})}^{1/s}  \, (\Delta_{\ell'+1}^{0, \jj})^{-1/s} \quad \text{and} \quad \thetamark \eta_{\ell'}(u_{\ell'}^\exact)^{2} \le \eta_{\ell'}(\RR_{\ell'}, u_{\ell'}^\exact)^{2} .
\end{align}
Since $0 < \thetamark = (\theta^{1/2}+ \, \lamsym / \lamsym^\exact)^{2}  \, (1-\lamsym / \lamsym^\exact)^{-{2}} < \theta^\exact$, and because there holds \eqref{axiom:discrete_reliability}, \cite[Lemma~4.14]{axioms} ensures, for all $\ell' \ge \ell_0$, the existence of a set $\RR_{\ell'} \subseteq \TT_{\ell'}$ satisfying
\begin{align}\label{eq:marking_est}
	\# \RR_{\ell'}  \lesssim \norm{u^\exact}_{\A_s(\TT_{\ell_0})}^{1/s} \, \eta_{\ell'} (u^\star_{\ell'})^{-1/s}  \quad \text{and} \quad \thetamark \eta_{\ell'}(u_{\ell'}^\exact)^{2} \le \eta_{\ell'}(\RR_{\ell'}, u_{\ell'}^\exact)^{2} .
\end{align}
Since $\lamsym/\lamsym^\exact < 1$ by assumption, the estimator equivalence~\eqref{eq:estimator-equivalence} shows that
\begin{align}
	\bigl[1-\lamsym / \lamsym^\exact \bigr] \, \eta_{\ell'}(u_{\ell'}^{\kk, \jj})  \le  \eta_{\ell'}(u_{\ell'}^\exact),
\end{align}%
which leads us to
\begin{align*}
	\# \RR_{\ell'}  \eqreff{eq:marking_est}\lesssim \norm{u^\exact}_{\A_s(\TT_{\ell_0})}^{1/s} \, \eta_{\ell'} (u_{\ell'}^{\kk, \jj})^{-1/s}.
\end{align*}
Moreover, thanks to nested iteration, Step~3 of the proof of Theorem~\ref{theorem:aisfem:linear-convergence}, Step~3 of the proof of Lemma~\ref{lemma:ghps}, and reliability \eqref{eq:reliable-error} of Proposition~\ref{proposition:reliability}, there holds that
\begin{align}\label{eq:Delta-estimator}
	\begin{split}
		\Delta_{\ell'+1}^{0, \jj}
		\eqreff{eq:step1:proof:convergence}\simeq \Lambda_{\ell'+1}^0
		&= \bigl[ \enorm{u^\exact - u_{\ell'}^{\kk, \jj}}^2 + \nu \, \eta_{{\ell'}+1}(u_{\ell'}^{\kk, \jj})^2 \bigr]^{1/2}
		\\
		&
		\eqreff*{eq5:combined-contraction}\lesssim
		\bigl[ \enorm{u^\exact - u_{\ell'}^{\kk, \jj}}^2 + \, \eta_{\ell'}(u_{\ell'}^{\kk, \jj})^2 \bigr]^{1/2}
		\eqreff{eq:reliable-error}\lesssim\eta_{\ell'} (u_{\ell'}^{\kk, \jj}).
	\end{split}
\end{align}
By summarizing the last two estimates, we obtain~\eqref{eq2:marking_est}.

\medskip
\textbf{Step 4.} For $(\ell'+1, 0, 0) \in \QQ$ with $\ell' \ge \ell_0$, we show that
\begin{align}\label{eq:dorfleropt}
	\# \MM_{\ell'} \le C_{\rm mark} \, \# \RR_{\ell'}.
\end{align}
with the constant $\Cmark \ge 1$ from Algorithm~\ref{algorithm}.
Recall the definition 
$$
\thetamark \eqreff{eqxx:theorem:aisfem:complexity}= \Big(\frac{\theta^{1/2}  + \, \lamsym/\lamsym^\exact}{1-\lamsym / \lamsym^\exact}\Big)^{2} 
\quad\text{with}\quad
\lamsym^\exact = \min\{1,\Calg^{-1} \Cstab^{-1}\}.
$$
 This shows that

\begin{align}\label{eq:cstinopt}
	\begin{split}
		\enorm{u_{\ell'}^\exact - u_{\ell'}^{\kk, \jj}} \, &\eqreff*{eq:step3-estimator-equivalence}\le \Calg \, \lamsym \, \eta_{\ell'}(u_{\ell'}^{\kk,\jj}) 
		\\&
		\le \Cstab^{-1} \, \frac{\lamsym}{\lamsym^\exact} \, \eta_{\ell'}(u_{\ell'}^{\kk,\jj})
		= \Cstab^{-1}\Bigl( \thetamark^{1/2}  \bigl[1- \lamsym/\lamsym^\exact\bigr] - \theta^{1/2}  \Bigr) \, \eta_{\ell'}(u_{\ell'}^{\kk,\jj}).
	\end{split}
\end{align}
Now, we can estimate
\begin{align*}
	\thetamark^{1/2} \bigl[1- \lamsym/\lamsym^\exact\bigr] \, \eta_{\ell'}(u_{\ell'}^{\kk',\jj'})
	&\eqreff*{eq:estimator-equivalence}\le \thetamark^{1/2} \eta_{\ell'}(u_{\ell'}^\exact)
	\eqreff*{eq:marking_est}\le \eta_{\ell'}(\RR_{\ell'}, u_{\ell'}^\exact)
	\\
	&\eqreff*{axiom:stability}\le \eta_{\ell'}(\RR_{\ell'}, u_{\ell'}^{\kk',\jj'}) + \Cstab \, \enorm{u_{\ell'}^\exact - u_{\ell'}^{\kk', \jj'}}
	\\
	&
	\eqreff*{eq:cstinopt}\le \eta_{\ell'}(\RR_{\ell'}, u_{\ell'}^{\kk',\jj'}) + \Bigl( \thetamark^{1/2} \bigl[1- \lamsym/\lamsym^\exact\bigr] - \theta^{1/2}  \Bigr) \eta_{\ell'}(u_{\ell'}^{\kk',\jj'}).
\end{align*}
Rearranging the terms, we obtain that $\RR_{\ell'}$ from Step~3 satisfies the D\"orfler marking criterion of Algorithm~\ref{algorithm}(iii) with the same parameter $\theta$, i.e., there holds
\begin{align}\label{eq:doerflerRl}
	\theta \, \eta_{\ell'}(u_{\ell'}^{\kk',\jj'})^{2}  \le \eta_{\ell'}(\RR_{\ell'}, u_{\ell'}^{\kk',\jj'})^{2} .
\end{align}
Hence, quasi-minimality of the set of marked elements $\MM_{\ell'}$ implies \eqref{eq:dorfleropt}.

\medskip
\textbf{Step 5.} Consider the case $(\ell, k, j) \in \QQ$ with $\ell \ge \ell_0$. Full linear convergence from Theorem~\ref{theorem:aisfem:linear-convergence} yields that
\begin{align}\label{eq:lin_cv_sum}
	\sum_{\substack{(\ell',k',j') \in \QQ \\ |\ell',k',j'| \le |\ell,k,j| \\ \ell' \ge \ell_0}} (\Delta_{\ell'}^{k', j'})^{-1/s}
	\eqreff{eq:theorem:aisfem:linear-convergence}\lesssim (\Delta_{\ell}^{k, j})^{-1/s} \sum_{\substack{(\ell',k',j') \in \QQ \\ |\ell',k',j'| \le |\ell,k,j| \\ \ell' \ge \ell_0}} (\qlin^{1/s})^{|\ell, k, j| - |\ell', k', j'|} \lesssim (\Delta_{\ell}^{k, j})^{-1/s}.
\end{align}
Recall that NVB refinement satisfies the mesh-closure estimate, i.e., there holds that
\begin{align}\label{eq:meshclosure}
	\# \TT_\ell - \# \TT_0\le \Cmesh \sum_{\ell' = 0}^{\ell-1} \# \MM_{\ell'} \quad \text{for all } \ell \ge 0,
\end{align}
where $\Cmesh > 1$ depends only on $\TT_{0}$.
Thus, for $(\ell, k, j) \in \QQ$ with $\ell > \ell_0$, we have by the mesh-closure estimate~\eqref{eq:meshclosure}, optimality of D\"orfler marking \eqref{eq:dorfleropt}, and full linear convergence \eqref{eq:lin_cv_sum} that 
\begin{align*}
	&\# \TT_{\ell} - \# \TT_{\ell_0}
	\eqreff*{eq:meshclosure}\lesssim \sum_{\ell' = \ell_0}^{\ell-1}  \# \MM_{\ell'}
	\eqreff{eq:dorfleropt}\lesssim \sum_{\ell' = \ell_0}^{\ell-1}  \#  \RR_{\ell'}
	\eqreff*{eq2:marking_est}\lesssim \  \norm{u^\exact}_{\A_s(\TT_{\ell_0})}^{1/s} \sum_{\ell' = \ell_0}^{\ell-1}  (\Delta_{\ell'+1}^{0, \jj})^{-1/s}
	\\& \quad
	\le  \norm{u^\exact}_{\A_s(\TT_{\ell_0})}^{1/s} \! \!  \sum_{\substack{(\ell',k',j') \in \QQ \\ |\ell',k',j'| \le |\ell,k,j| \\ \ell' \ge \ell_0}} (\Delta_{\ell'}^{k', j'})^{-1/s}
	\eqreff{eq:lin_cv_sum}\lesssim \norm{u^\exact}_{\A_s(\TT_{\ell_0})}^{1/s} (\Delta_{\ell}^{k, j})^{-1/s}.
\end{align*}
Rearranging the terms and noting that $\# \TT_{\ell} - \# \TT_{\ell_0} +1 \le 2 \, (\#\TT_{\ell} - \# \TT_{\ell_0})$, we obtain that
\begin{align*}
	(\# \TT_{\ell} - \# \TT_{\ell_0} + 1)^s \Delta_{\ell}^{k, j}
	\lesssim \norm{u^\exact}_{\A_s(\TT_{\ell_0})} \quad \text{for} \quad \ell > \ell_0.
\end{align*}
Trivially, full linear convergence~\eqref{eq:theorem:aisfem:linear-convergence} proves that
\begin{align*}
	(\# \TT_{\ell} - \# \TT_{\ell_0} + 1)^s \Delta_{\ell_0}^{k, j}
	= \Delta_{\ell_0}^{k, j}
	\eqreff{eq:theorem:aisfem:linear-convergence}\lesssim \Delta_{\ell_0}^{0, 0}  \quad \text{for} \quad \ell = \ell_0.
\end{align*}
We recall from~\cite[Lemma~22]{bhp2017} that for all $\TT_\coarse \in \T$ and all $\TT_\fine \in \T(\TT_\coarse)$, it holds that
\begin{align}\label{eq:bhp-lemma22}
	\# \TT_\fine - \# \TT_\coarse +1
	\le \# \TT_\fine
	\le  \# \TT_\coarse \, (\# \TT_\fine - \# \TT_\coarse +1).
\end{align}
Overall, we have thus shown that
\begin{align*}
	(\# \TT_{\ell})^s \Delta_\ell^{k,j} \eqreff{eq:bhp-lemma22}\lesssim (\# \TT_{\ell} - \# \TT_{\ell_0} + 1)^s \Delta_\ell^{k,j}
	\lesssim \max \{\norm{u^\exact}_{\A_s(\TT_{\ell_0})}, \Delta_{\ell_0}^{0,0}\}
\end{align*}
for all $(\ell, k, j) \in \QQ$ with $\ell \ge \ell_0$.
This concludes the proof of the upper bound in~\eqref{eq:equiv:upperbound} and hence that of \eqref{eq:theorem:aisfem:complexity}.

\medskip
\textbf{Step 6.} We prove the equivalence in \eqref{eq2:theorem:aisfem:complexity} by combining the steps above.  Recall that
\begin{align*}
	\QQ \backslash \set{(\ell,k,j) \in \QQ \given \ell \ge \ell_0}
	= \set{(\ell,k,j) \in \QQ \given \ell < \ell_0}
	\quad \text{is finite}
\end{align*}
and that $\norm{u^\exact}_{\A_s(\TT_{0})} < \infty$ is equivalent to $\norm{u^\exact}_{\A_s(\TT_{\ell_0})} < \infty$. Thus, the claim follows immediately by the equivalence in \eqref{eq:theorem:aisfem:complexity}. This concludes the proof.
\end{proof}

\section{Numerical experiments}
\label{section:numerics}
We consider the model problem~\eqref{eq:intro:model_pb} from the introduction. The \textsc{Matlab} implementation of the following experiments is embedded into the open source software package MooAFEM from \cite{MooAFEM}. In the following, Algorithm~\ref{algorithm} employs the optimal local $hp$-robust multigrid method from \cite{imps2022} as algebraic solver and the standard residual error estimator $\eta_\ell$. Given $T \in \TT_\ell$ and $v_\ell \in \XX_\ell$, the local contribution of $\eta_{\ell}$ reads
\begin{equation*}
	\eta_\ell(T; v_\ell)^2 \coloneqq h_T^2 \, \norm{-\div(\boldsymbol{A} \, \nabla v_\ell - \bff) + \boldsymbol{b} \cdot \nabla v_\ell + c \, v_\ell - f}_{L^2(T)}^2 + h_T \, \norm{\jump{(\boldsymbol{A} \, \nabla v_\ell - \bff) \cdot \boldsymbol{n}}}_{L^2(\partial T \cap \Omega)}^2.
\end{equation*}
It is well-known (see, e.g., \cite[Section~6.1]{axioms}) that $\eta_{\ell}$ satisfies the axioms \eqref{axiom:stability}--\eqref{axiom:discrete_reliability} from Section~\ref{subsection:axioms}.
\subsection{Diffusion-convection-reaction on L-shaped domain} \label{section:Lshape}
In this subsection, we consider the problem \eqref{eq:intro:model_pb} on the L-shaped domain $\Omega = (-1,1)^2 \setminus \bigl([0,1] \times [-1,0]\bigr) \subset \R^2$ with coefficients $\boldsymbol{A}(x) = \mathrm{Id}$, $\bfb(x) = x$, and $c(x) = 1$, and right-hand side $f(x) = 1$, i.e.,
\begin{align*}
	-\Delta u^{\exact}(x) + x \cdot \nabla u^\exact(x) + u^\exact(x) = 1
	\quad \text{for } x \in \Omega
	\quad \text{subject to} \quad
	u^\exact(x) = 0
	\quad \text{for } x \in \partial \Omega.
\end{align*}

\subsubsection*{\textbf{Optimality of AISFEM}}
We first display the optimality of Algorithm~\ref{algorithm} with respect to the computational cost stated in Theorem~\ref{theorem:aisfem:complexity} using the equivalence $\# \TT_{\ell} \simeq {\rm dim} \, \XX_\ell$. Numerically, we test with the parameters $\lamsym = \lamalg = 0.1$, $\delta = 0.5$, and $\theta = 0.5$ and, unless stated explicitly, the stopping criterion ${\rm dim} \, \XX_\ell > 10^7$. Note that both the total error and the algebraic error are unknown in all practical purposes. Therefore, we cannot study the decay of the quasi-error, but rather consider the equivalent error estimator $\eta_{\ell}(u_\ell^{\kk, \jj}) \simeq \Delta_\ell^{\kk,\jj}$. Figure~\ref{fig:optimality} shows that the proposed algorithm achieves optimal rates $-m/2$ for several polynomial degrees $m$ both with respect to the computational costs and the elapsed computational time after a short preasymptotic phase.
\begin{figure}[htp!]
	\centering
	\subfloat{
		\includegraphics[scale=0.8]{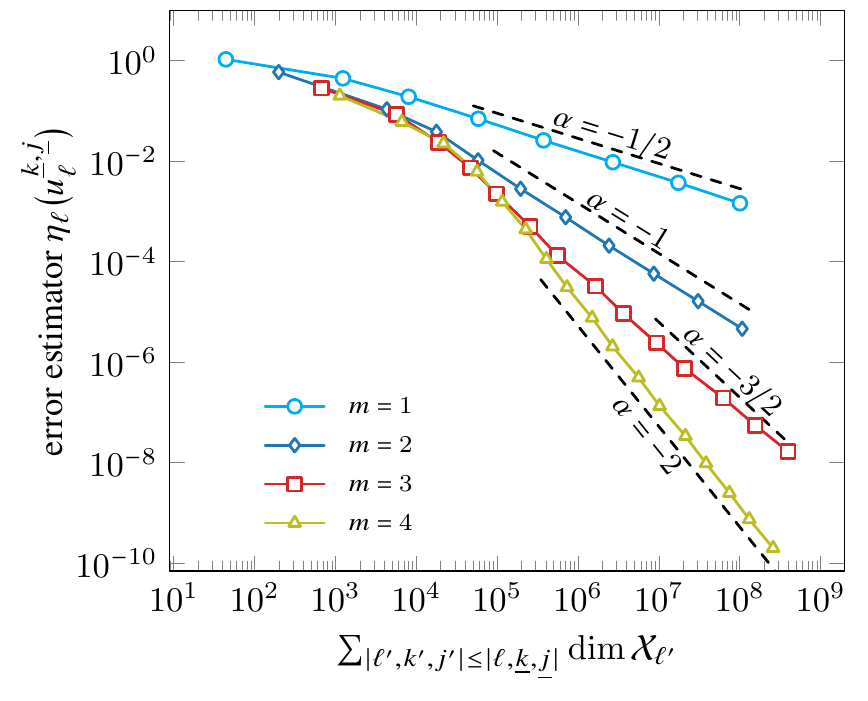}
		\label{fig:convergence}
	}
	\subfloat{
		\includegraphics[scale=0.8]{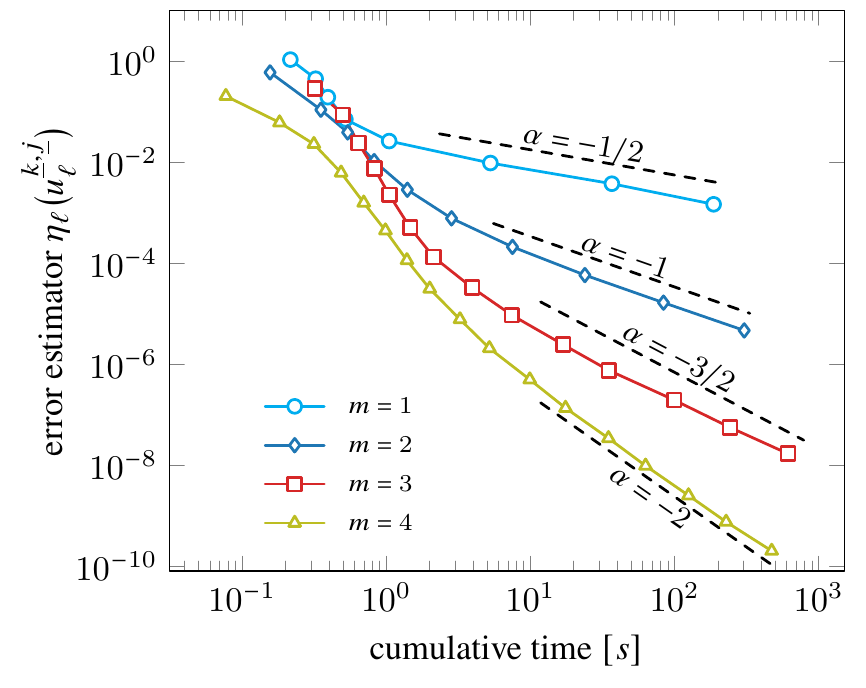}
		\label{fig:complexity}
	}
	\caption{\label{fig:optimality}\textit{Optimality of AISFEM for the diffusion-convection-reaction problem on the L-shaped domain from Section~\ref{section:Lshape}.} Convergence history plot of the error estimator $\eta_{\ell}(u_\ell^{\kk, \jj})$ over the computational costs (left) and the elapsed computational time (right) for different polynomial degrees $m$.}
\end{figure}

\subsubsection*{\textbf{Optimality of the iteratively symmetrized solver}} 
Optimality of AISFEM is possible when the inherent symmetrization and algebraic procedures are treated efficiently.
In Figure~\ref{fig:timings}, we present the time required for our iteratively symmetrized solver compared to the \textsc{Matlab} built-in direct solver (backslash) of the linear system related to \eqref{eq:intro:discrete}. We note that the displayed timings are comparing the direct solve time itself with the remaining time (including the setup of the Zarantonello system, computation of the error estimator, and mesh refinement). Hence, the presented numbers favor the built-in direct solver over the \textsc{Matlab}-implemented multigrid code. Nevertheless, the combination of the Zarantonello symmetrization with the optimal local multigrid solver from \cite{imps2022} appears to be of comparable speed to the built-in direct solver with the observation that as the dimension of the linear system increases, the backslash performance begins to degrade. Moreover, Figure~\ref{fig:iterations} shows that the iteration numbers of the solver remain uniformly bounded in the levels for various choices of the parameters $\lamsym$ and $\theta$. Note that when $\lamsym$ decreases, a higher accuracy of the Zarantonello symmetrization is required. Therefore, the iteration numbers are expected to increase with smaller $\lamsym$ as seen in Figure~\ref{fig:iterations} (left). Moreover, the iteration numbers are also expected to increase as $\theta$ becomes larger. This is due to the aggressive refinement leading to hierarchies of low numbers of levels but with considerable increase in the dimension of the linear systems. This may lead to the conclusion that $\theta$ should be chosen very small in order to have less iterations per level, but studying the \emph{cumulative} solver steps in Figure~\ref{fig:iterations} (right) shows that this is not the best strategy.
\begin{figure}[htpb!]
	\centering
	\includegraphics[scale=0.885]{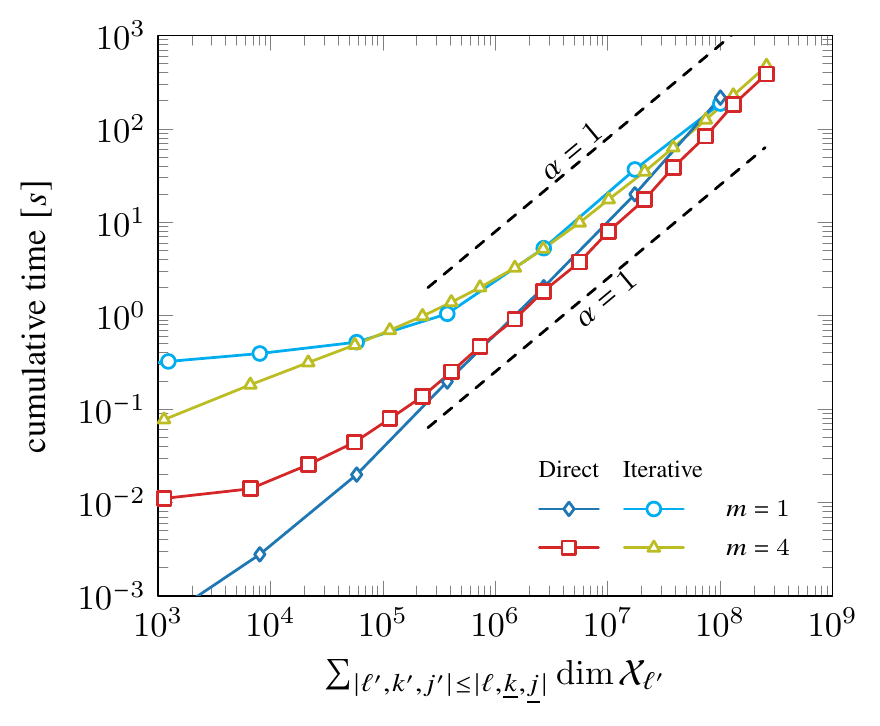}
	\label{fig:backslash}
	
	\caption{\textit{Optimality of the combined iterative solver for the diffusion-convection-reaction problem on the L-shaped domain from Section~\ref{section:Lshape}.} \label{fig:timings} Cumulative time for the direct solve and AISFEM over the computational costs.}
\end{figure}

\begin{table}
	\begin{tabular}{c|c|c|c|c|c|c|c|c|c} 
		\toprule
		\diagbox{$\lamsym$}{$\theta$}& {0.1} & {0.2} & {0.3} & {0.4} & {0.5} & {0.6} & {0.7} & {0.8} & {0.9}\\ \midrule
		$10^{-1}$  & 533 & 470 & \colorbox{pyYellow}{402} & 424 & 497 & 608 & 801 & 971 & 1513 \\
		$10^{-2}$  & 3084 &  1878 & 1566 & 1482 & 1524  & 1624 & 1869 & 2485 & 4266  \\
		$10^{-3}$  & 6543  & 4490 & 3478 & 2831 & 2894  & 3371 & 3826 & 4729 & 6956  \\
		$10^{-4}$  & 10791  & 6621 & 5211 & 4381 & 4475 & 4777 & 5979 & 7398 & 10901 \\ \bottomrule
	\end{tabular}
	\vspace{0.4cm}
	\caption{\label{tab:cost} \textit{Optimal selection of parameters with respect to the computational costs for the experiment from Section~\ref{section:Lshape}.} For the comparison, we consider the weighted costs $ \big[ \eta_{\ell}(u_\ell^{\kk, \jj}) \, \sum_{| \ell', k', j' | \le | \ell, \kk, \jj |} {\rm dim} \, \mathcal{X}_{\ell'} \big]$ with stopping criterion $\eta_\ell(u_\ell^{\kk, \jj}) < 10^{-5}$ for various choices of $\lambda_{\rm sym}$ and $\theta$ with the optimal choice highlighted in color.}
\end{table}

\begin{figure}[htpb!]
	\centering
	\subfloat{
		\includegraphics[scale=0.8]{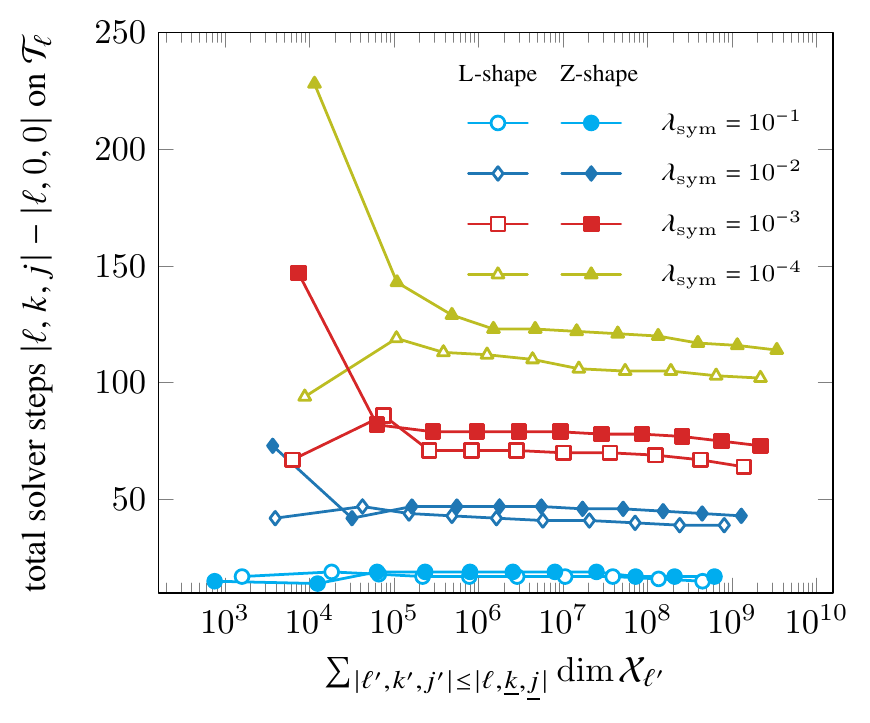}
		\label{fig:lambda:iterations}
	}
	\subfloat{
		\includegraphics[scale=0.8]{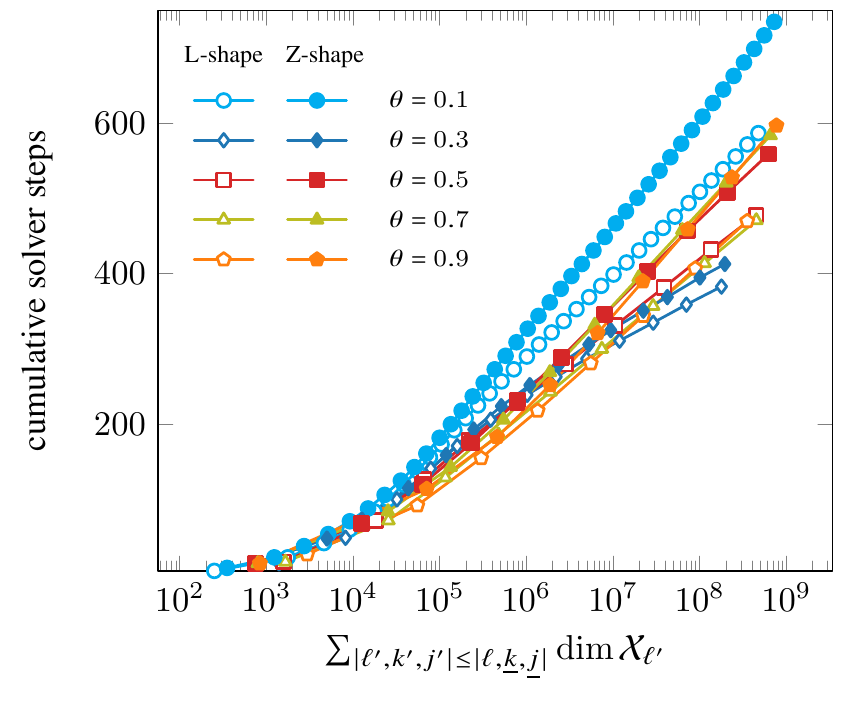}
		\label{fig:theta:iterations}
	}
	\caption{\label{fig:iterations} \textit{Uniform bound on the iteration numbers for the diffusion-convection-reaction problem on the L-shaped domain from Section~\ref{section:Lshape} and the strong convection problem on the Z-shaped domain from Section~\ref{section:Zshape}.} Number of total solver steps $| \ell, k, j | - | \ell, 0, 0|$ on the level $\ell$ for various selections of the symmetrization stopping parameter $\lambda_{\rm sym}$ with fixed $\theta = 0.5$ (left) and the cumulative solver steps for different marking parameter $\theta$ with fixed $\lambda_{\rm sym} = 0.1$ (right).}
\end{figure}

\subsubsection*{\textbf{Parameter study of AISFEM}}
We now investigate which parameters yield the best contraction in the iteratively symmetrized steps~\ref{algorithm}(ii)--(iii). Since the parameters depend on the contraction factors $\qalg$ from \eqref{eq:contraction:alg} and $\qsym$ from \eqref{eq:zarantonello:unperturbed}, we study a setting where the exact discrete solution $u_\ell^\exact$ to \eqref{eq:intro:discrete} and the exact Zarantonello solution $u_\ell^{k, \exact}$ to \eqref{eq:intro:zarantonello} are computed. Then, we compute $\qalg(\ell, k,j)$ for $(\ell, k, j) \in \QQ$ and define the level-wise contraction factors $\qalg(\ell)$ as the maximum over all $\qalg(\ell, k,j)$ for fixed $\ell \in \N_0$ and analogously for $\qsym$. From now on, we fix the polynomial degree $m=2$ and the parameters $\lamalg = 10^{-2}$ for the numerical experiments. We investigate the behavior of the combined solver for various choices of $\lamsym \in \set{10^{-1}, 10^{-2}, 10^{-3}, 10^{-4}}$ and $\theta \in \set{0.1, 0.3, 0.5, 0.7, 0.9}$. Figure~\ref{fig:Lshape:lamalg} shows upper bounds $\lambda_{\rm alg} < \overline{\lambda}_{\rm alg}^\exact$  = $(1-q_{\rm sym})(1-q_{\rm alg})/(4 \, q_{\rm alg})$ (see the implicit definition of $\overline{\lambda}_{\rm alg}^\exact$ in \eqref{eq1:lem:contraction_perturbed}) and Figure~\ref{fig:Lshape:contractions} displays contraction factors $\qsym \approx 1/2$ and $\qsymm \approx 1/2$, independently of the choice of $\theta$ and $\lamsym$. Note that $\qsym$ being close to $\qsymm$ means that the perturbed, i.e., iteratively symmetrized, Zarantonello step is of comparable performance to the unperturbed Zarantonello iteration. Moreover, Table~\ref{tab:cost} shows that the optimal combination of the parameters with respect to the computational costs is $\theta = 0.3$ and $\lamsym = 10^{-1}$. Furthermore, it appears that the choice of $\theta$ has a stronger impact on the costs than the selection of $\lamsym$.

\begin{figure}[htp!]
	\centering
	\subfloat{
		\includegraphics[scale=0.8]{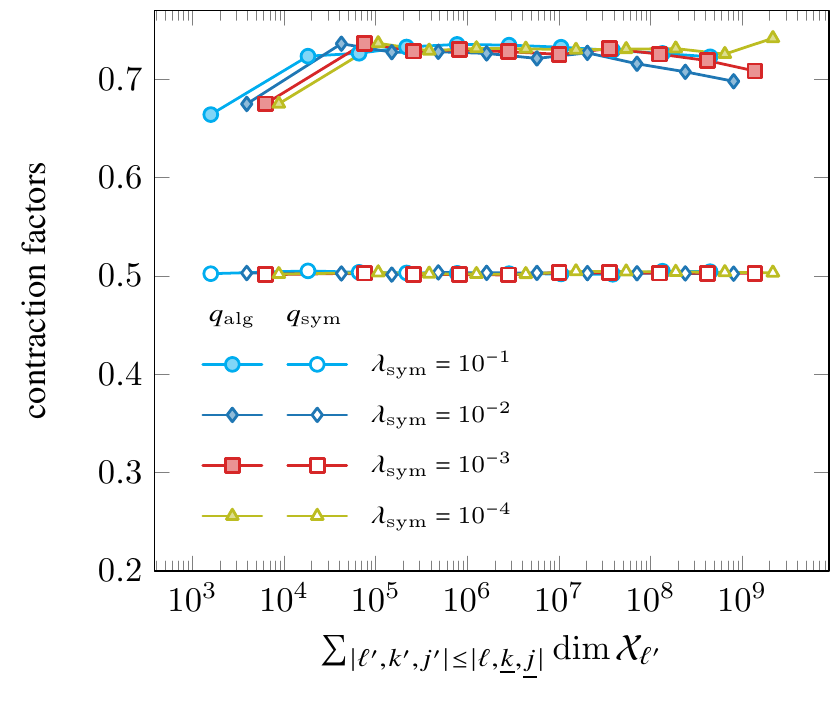}
	}
	\subfloat{
		\includegraphics[scale=0.8]{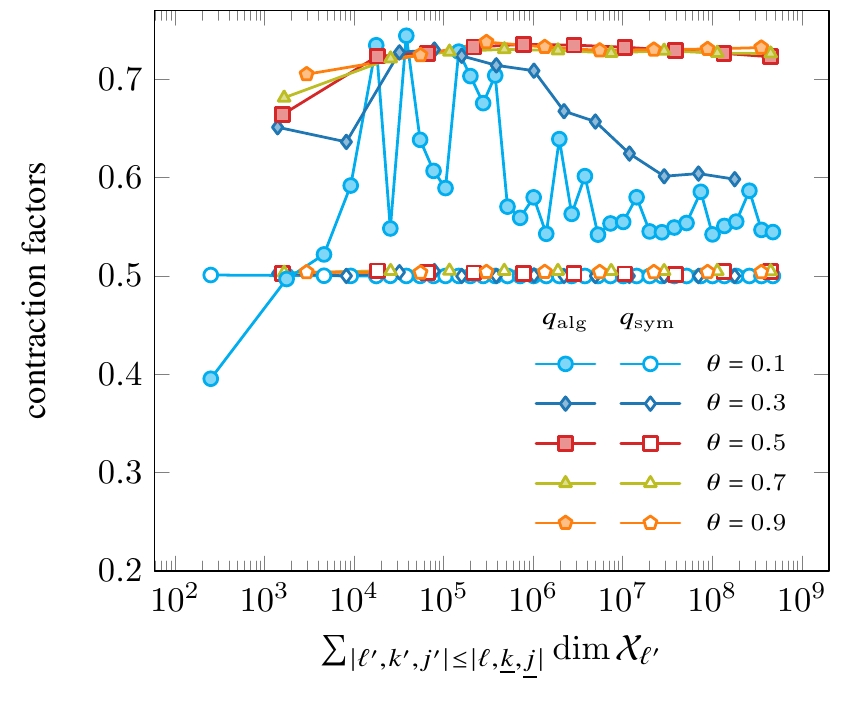}
	} 
	\caption{\label{fig:Lshape:contractions}\textit{Uniform contraction of the iterative solver for the diffusion-convection-reaction problem on the L-shaped domain from Section~\ref{section:Lshape}.} Experimental contraction factors $q_{\rm alg}$, $q_{\rm sym}$ and $\overline{q}_{\rm sym}$ for various choices of the symmetrization stopping parameter $\lambda_{\rm sym}$ with fixed $\theta = 0.5$ (left) and different marking parameter $\theta$ with fixed $\lambda_{\rm sym} = 0.1$ (right).}
\end{figure}

\begin{figure}
	\centering
	\subfloat{
		\includegraphics[scale=0.69]{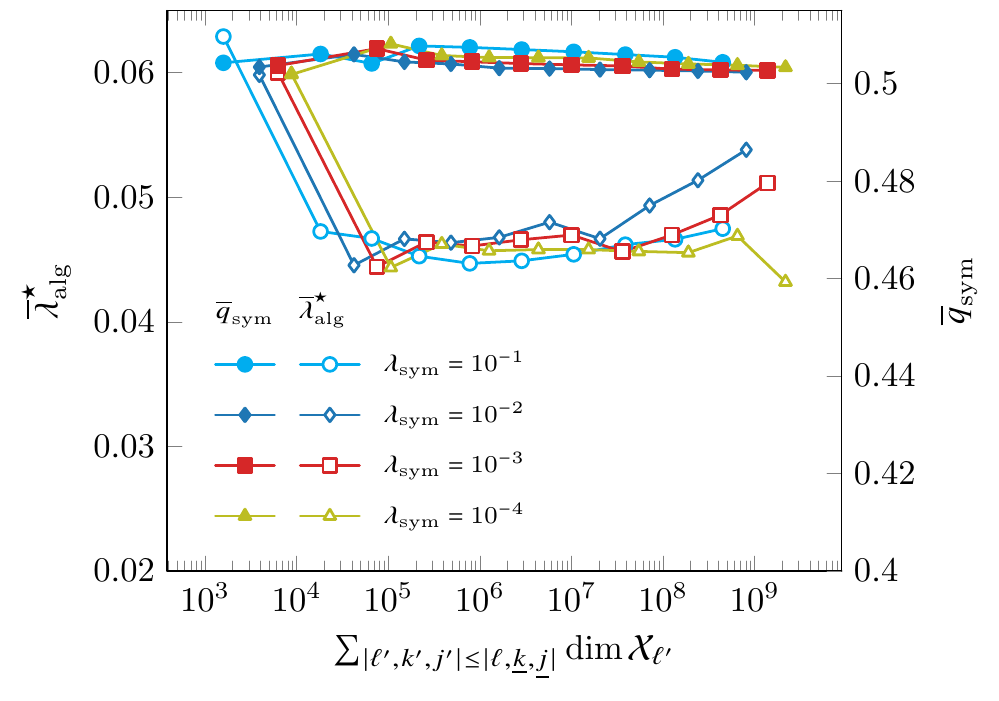}
	}
	\subfloat{
		\includegraphics[scale=0.69]{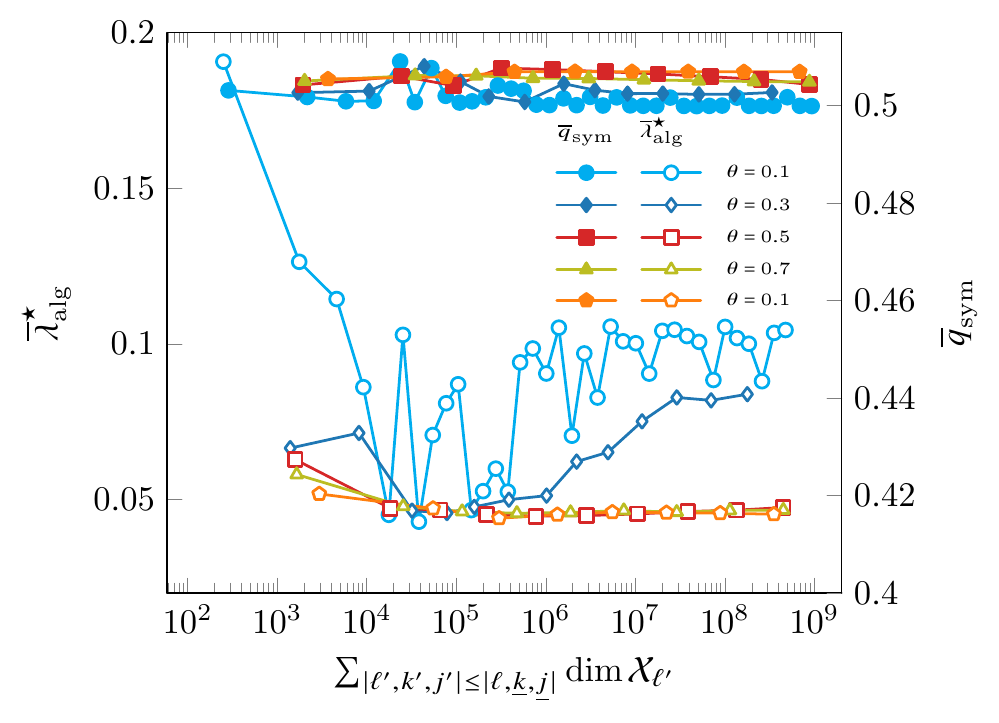}
	} 
	\caption{\label{fig:Lshape:lamalg} Computed upper bounds for $\lambda_{\rm alg}^\exact < \overline{\lambda}_{\rm alg}^\exact$ for various choices of the symmetrization stopping parameter $\lambda_{\rm sym}$ with fixed $\theta = 0.5$ (left) and different marking parameter $\theta$ with fixed $\lambda_{\rm sym} = 0.1$ (right), where we emphasize the double scaling of the $y$-axis for $\lambda_{\rm alg}^\exact$ resp. $ \overline{q}_{\rm sym}$ in both figures.}
\end{figure}

\subsection{Strong convection on Z-shaped domain}\label{section:Zshape}
In this subsection, we consider the problem \eqref{eq:intro:model_pb} on the Z-shaped domain $\Omega = (-1,1)^2 \setminus {\rm conv}\set{(0,0), (-1,0), (-1,-1)} \subset \R^2$ with coefficients $\boldsymbol{A}(x) = \mathrm{Id}$ and ${\bfb}(x) = (5,5)^\top$, and right-hand side $f(x) = 1$, i.e.,
\begin{align*}
	-\Delta u^{\exact}(x) + (5,5)^\top \cdot \nabla u^\exact(x) = 1
	\quad \text{for } x \in \Omega
	\quad \text{and} \quad
	u^\exact(x) = 0
	\quad \text{for } x \in \partial \Omega.
\end{align*}
Figure~\ref{fig:zshape:optimality} shows that even for a strong convection combined with a strong singularity at the origin, the adaptive algorithm recovers the optimal convergence rates $-m/2$ for several polynomial degrees $m$ both with respect to the cumulative costs and computational time. In Figure~\ref{fig:iterations} we see that the number of solver steps per level $\ell$ behaves similarly to the diffusion-convection-reaction problem on the L-shape from Section~\ref{section:Lshape} with an increase due to the stronger singularity. Furthermore, Figure \ref{fig:zshape:contractions} displays upper bounds on $\lamalg \le \lamalg^\exact < \overline{\lambda}_{\rm alg}^\exact$ and the contraction factor $\qsymm \approx 1/2$ (after an initial phase of reduced contraction) for the perturbed Zarantonello system in \eqref{eq1:lem:contraction_perturbed}.
\begin{figure}[htbp!]
	\centering
	\subfloat{
		\includegraphics[scale=0.8]{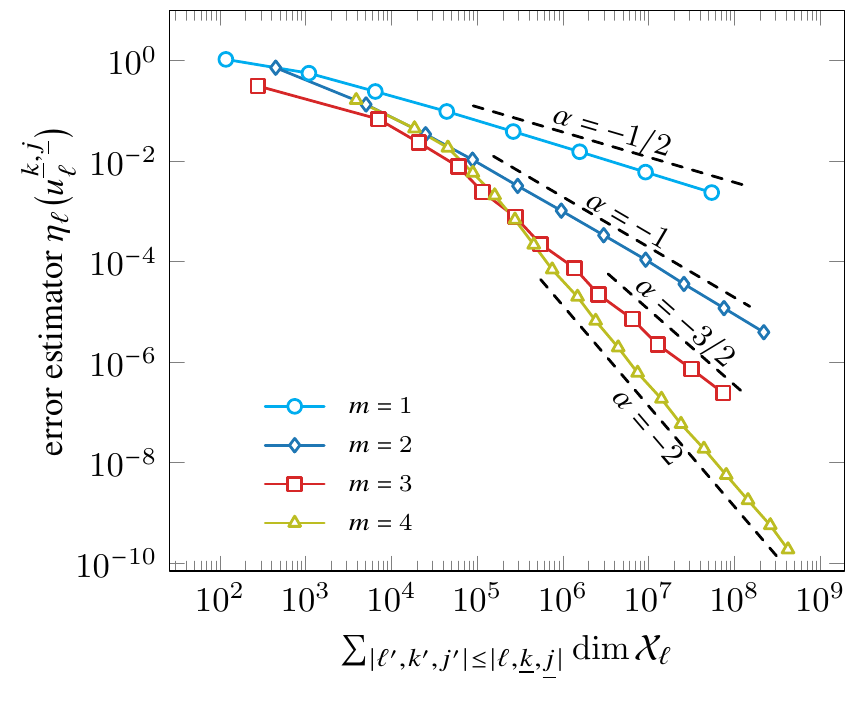}
		\label{fig:zshape:convergence}
	}
	\subfloat{
		\includegraphics[scale=0.8]{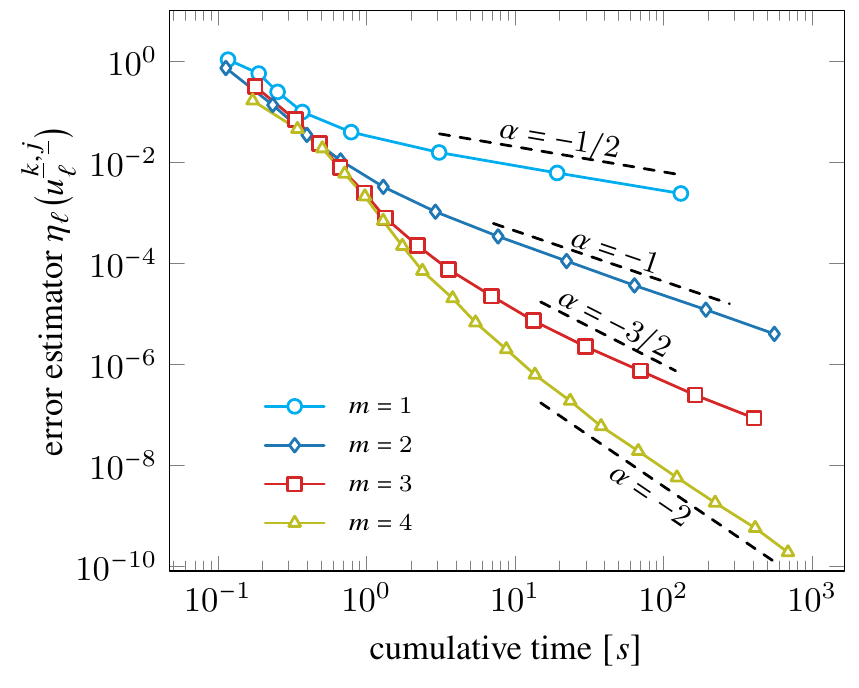}
		\label{fig:zshape:complexity}
	}
	\caption{\label{fig:zshape:optimality}\textit{Optimality of AISFEM for the strong convection problem on the Z-shaped domain from Section~\ref{section:Zshape}.} Convergence history plot of the error estimator $\eta_{\ell}(u_\ell^{\kk, \jj})$ over the computational cost (left) and the elapsed computational time (right).}
\end{figure}

\begin{figure}[htp!]
	\centering
	\subfloat{
		\includegraphics[scale=0.7]{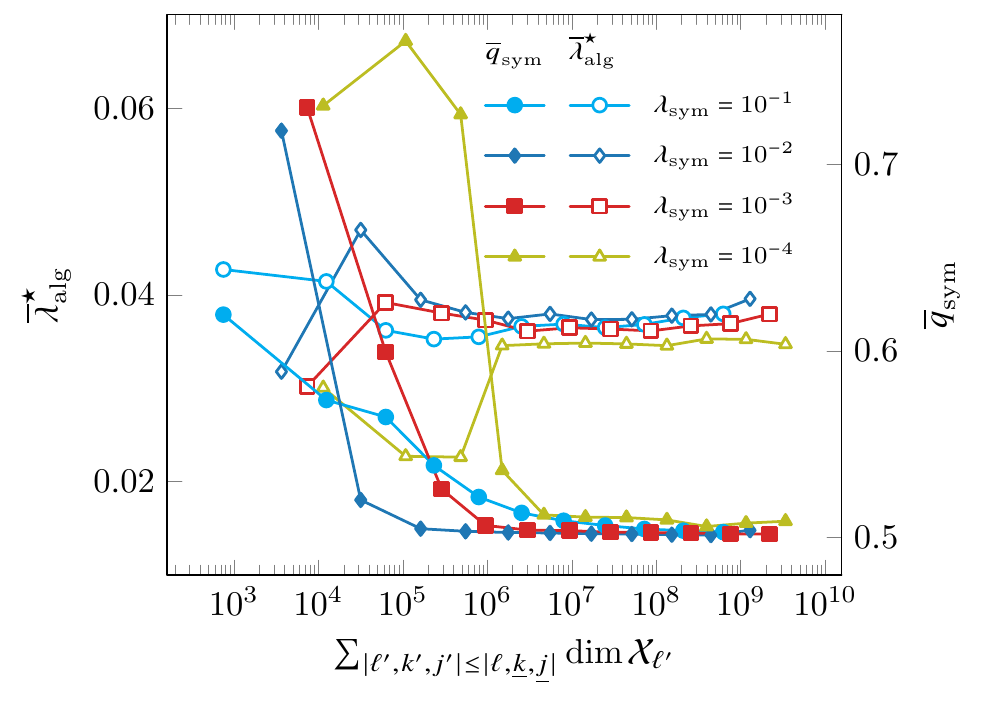}
	}
	\subfloat{
		\includegraphics[scale=0.7]{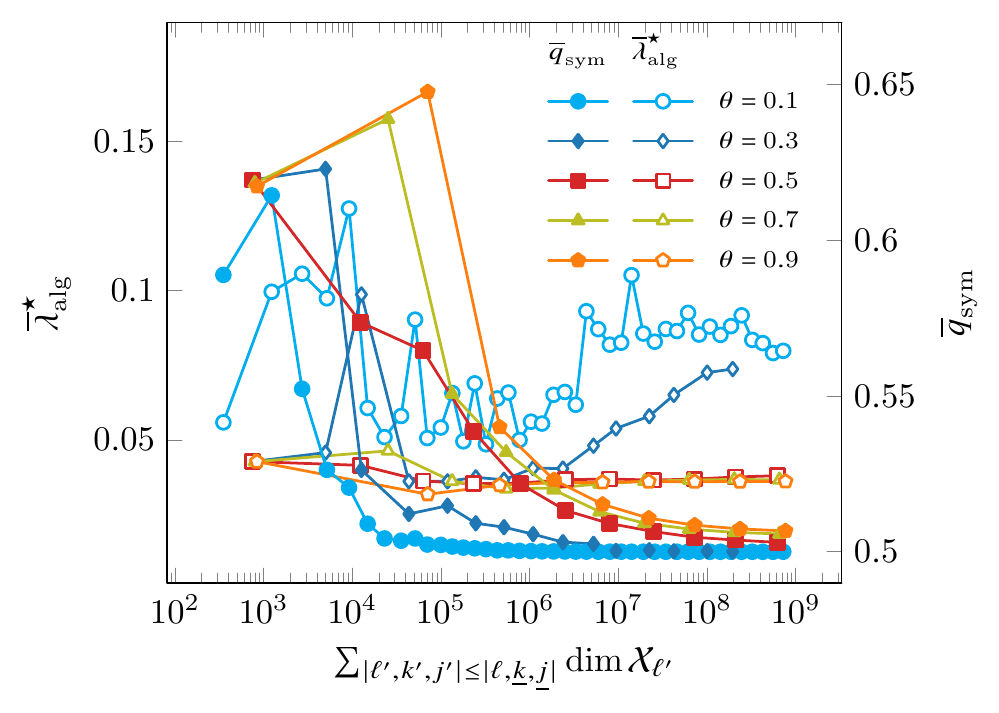}
	}
	\caption{\label{fig:zshape:contractions}\textit{Uniform contraction of the combined solver for the strong convection problem on the Z-shaped domain from Section~\ref{section:Zshape}.} Contraction factor $\overline{q}_{\rm sym}$ and computed upper bound for $\lambda_{\rm alg}^\exact < \overline{\lambda}_{\rm alg}^\exact$ for various symmetrization stopping parameter $\lambda_{\rm sym}$ with fixed $\theta = 0.5$ (left) and different marking parameter $\theta$ with fixed $\lambda_{\rm sym} = 0.1$ (right), where we emphasize the double scaling of the $y$-axis for $\lambda_{\rm alg}^\exact$ resp. $ \overline{q}_{\rm sym}$ in both figures.}
\end{figure}


\section{Conclusion and future work}\label{section:conclusion}

In this work, we have developed and analyzed an adaptive finite element method for nonsymmetric second-order linear elliptic PDEs~\eqref{eq:intro:model_pb}. From a conceptual point of view, the crucial assumption is that the weak formulation takes the form
\begin{align}\label{eq:conclusion:1} 
 b(u^\exact, v) \coloneqq a(u^\exact, v) + \dual{\KK u^\exact}{v} = F(v)
 \quad \text{for all } v \in \XX,
\end{align}
where $F \in \XX'$ is a linear and continuous functional, $a(\cdot, \cdot)$ is a symmetric, continuous, and elliptic bilinear form on $\XX$, and $\KK \colon \XX \to \XX'$ is a compact operator such that the bilinear form $b(\cdot,\cdot)$ is still elliptic on $\XX$. Let $\enorm{\, \cdot \,}$ denote the $a(\cdot, \cdot)$-induced energy norm. For the discrete formulation
\begin{align}\label{eq:conclusion:2}
 b(u_\ell^\exact, v_\ell) = F(v_\ell)
 \quad \text{for all } v_\ell \in \XX_\ell,
\end{align}
we require an (abstract) inexact iterative solver with iteration map given by \linebreak[4]$\overline\Phi_\ell(F;\cdot) \colon \XX_\ell \to \XX_\ell$ that contracts the \emph{error} in the energy norm, i.e.,
\begin{align}\label{eq:conclusion:3} 
 \enorm{u_\ell^\exact - \overline{u}_\ell^{k+1}}
 \le \qsymm \, \enorm{u_\ell^\exact - \overline{u}_\ell^k}
 \quad \text{with } \overline{u}_\ell^{k+1} \coloneqq \overline\Phi_\ell(F; \overline{u}_\ell^k) 
 \text{ for all } k \in \N,
\end{align}
where the contraction constant $0 < \qsymm < 1$ is independent of $\overline{u}_\ell^0 \in \XX_\ell$. Under such assumptions and with the usual residual \textsl{a~posteriori} error estimator $\eta_\ell(\cdot)$ (satisfying the abstract assumptions~\eqref{axiom:stability}--\eqref{axiom:discrete_reliability}) on nested conforming discrete spaces $\XX_\ell \subseteq \XX_{\ell+1} \subset \XX$, the present work proves that the analysis from~\cite{ghps2021} can be generalized from symmetric PDEs (with $\KK = 0$) to the general formulation~\eqref{eq:conclusion:1}: Restricting Algorithm~\ref{algorithm} to the outer $\ell$-loop (for mesh refinement) and the inner $k$-loop (for the solver associated to $\overline \Phi_\ell$), we obtain a simplified index set
\begin{align}\label{eq:conclusion:4}
 \overline\QQ \coloneqq \set{(\ell,k) \in \N_0^2 \given \overline{u}_\ell^k \text{ is computed by the simplified algorithm}}
\end{align}
together with the canonical step counter $|\ell,k| \in \N_0$ on $\overline\QQ$ defined analogously to~\eqref{eq:stepcounter}.
Then, Lemma~\ref{lemma:new:ghps} (lucky non-termination of the solver), Lemma~\ref{prop:plain-convergence} (\textsl{a~priori} convergence), Lemma~\ref{lemma:quasi-pythagoras} (quasi-Pythagorean estimate), and Lemma~\ref{lemma:ghps} (contraction of weighted discretization and solver error) hold verbatim (and the proof of Lemma~\ref{lemma:quasi-pythagoras} indeed relies on the compactness of $\KK$) if we replace $u_\ell^{k,\jj}$ in the given proofs by $\overline{u}_\ell^k$ in the current solver setting. Therefore, we obtain full linear convergence in the spirit of Theorem~\ref{theorem:aisfem:linear-convergence}:
For arbitrary adaptivity parameters $0 < \theta \le 1$ and $\lamsym > 0$, there exist constants $\Clin > 0$ and $0 < \qlin < 1$ as well as an index $\ell_0 \in \N_0$ such that
\begin{align}\label{eq:conclusion:5} 
 \overline\Delta_{\ell}^{k} \le \Clin \, \qlin^{|\ell,k|-|\ell',k'|} \, \overline\Delta_{\ell'}^{k'}
 \quad \text{for all } (\ell',k'),(\ell,k) \in \overline\QQ 
 \text{ with } |\ell',k'| \le |\ell,k| \text{ and } \ell' \ge \ell_0,
\end{align}
where $\overline\Delta_\ell^k \coloneqq \enorm{u^\exact - \overline{u}_\ell^k} + \eta_\ell(\overline{u}_\ell^k)$ denotes the corresponding quasi-error.
In particular, also Corollary~\ref{corollary:aisfem:linear-convergence} holds verbatim with $\QQ$ replaced by $\overline\QQ$ and $\Delta_{\ell}^{k,j}$ replaced by $\overline\Delta_\ell^k$, i.e., convergence rates with respect to the number of degrees of freedom coincide with rates with respect to the overall computational cost. Finally, it is easy to check that also Theorem~\ref{theorem:aisfem:complexity} holds verbatim and proves that, for sufficiently small adaptivity parameters $0 < \theta \ll 1$ and $0 < \lamsym \ll 1$ in the sense of~\eqref{eqxx:theorem:aisfem:complexity}, it holds that
\begin{align}\label{eq:conclusion:6} 
		\norm{u^\exact}_{\A_s(\TT_{0})} < \infty
		\quad \Longleftrightarrow \quad
		\sup_{(\ell,k) \in \QQ}
		\Bigl(\sum_{\substack{(\ell',k') \in \overline\QQ \\ |\ell',k'| \le |\ell,k|}} \#\TT_{\ell'}\Bigr)^s \, \overline\Delta_\ell^{k} < \infty,
\end{align} 
which yields optimal complexity of the simplified algorithm.

In the current analysis, the combined Zarantonello symmetrization with a contractive SPD algebraic solver is used as one solver module to guarantee~\eqref{eq:conclusion:3} for $\overline{u}_\ell^k \coloneqq u_\ell^{k,\jj}$ (see Lemma~\ref{lem:contraction_perturbed}, where contraction, however, only holds for $1 \le k < \kk[\ell]$), leading to all results being formulated over the triple index set $\QQ \subset \N_0^3$ (see Section~\ref{section:algorithm}--\ref{section:main_results}).

 We note that another choice for solving the arising nonsymmetric FEM systems would be preconditioned GMRES (see, e.g.,~\cite{MR848568,MR1990645}), where an optimal preconditioner for the symmetric part would be employed. Then, it is well-known from the field-of-value analysis (see, e.g.,~\cite{MR1483571}) that the algebraic solver would satisfy a \emph{generalized} contraction for the algebraic residual (in a discrete vector norm). However, the link between the algebraic residual and the functional setting appears to be open. Moreover, the \textsl{a~posteriori} error control of the algebraic error for such a GMRES solver is still to be developed. 
 
 While these questions are left for future work, we already note some results that can be achieved along the arguments of~\cite{ghps2021}: If the solver $\overline\Phi_\ell(F;\cdot)$ provides iterates $(\overline{u}_\ell^k)_{k \in \N_0}$ satisfying only the generalized contraction
\begin{align}\label{eq:conclusion:7} 
 \enorm{u_\ell^\exact - \overline{u}_\ell^k}
 \le \Cctr \, \qsymm^k \, \enorm{u_\ell^\exact - \overline{u}_\ell^0}
\quad  \text{ for all } k \in \N
\end{align}
together with the \textsl{a~posteriori} error control
\begin{align}\label{eq:conclusion:8} 
 \enorm{u_\ell^\exact - \overline{u}_\ell^k}
 \le \Cctr' \, \enorm{u_\ell^k - \overline{u}_\ell^{k-1}}
 \quad \text{for all } k \in \N,
\end{align}
where $\Cctr, \Cctr' > 0$ and $0 < \qsymm < 1$ are given constants independently of $\overline{u}_\ell^0 \in \XX_\ell$, then full linear convergence~\eqref{eq:conclusion:5} can be proved for all $0 < \theta \le 1$ under the additional assumption that $\lamsym$ has to be sufficiently small. However, the proof of full linear convergence~\eqref{eq:conclusion:5} for arbitrary $0 < \theta \le 1$ and arbitrary $\lamsym >0$ is open, while optimal complexity~\eqref{eq:conclusion:6} for sufficiently small $0 < \theta < 1$ and $\lamsym$ in the sense of~\eqref{eqxx:theorem:aisfem:complexity} remains valid (even with the same proof).

\printbibliography

\end{document}